\numberwithin{equation}{section}
\numberwithin{figure}{section}
\newtheorem{theorem}{Theorem}
\newtheorem{conjecture}[theorem]{Conjecture}
\newtheorem{corollary}[theorem]{Corollary}
\newtheorem{lemma}[theorem]{Lemma}
\newtheorem{observation}[theorem]{Observation}
\newtheorem{problem}[theorem]{Problem}
\newtheorem{remark}[theorem]{Remark}
\newtheorem{claim}[theorem]{Claim}
\newtheorem{definition}[theorem]{Definition}
\newtheorem{proposition}[theorem]{Proposition}
\newcommand{\calH}{\mathcal{H}}
\newcommand{\calF}{\mathcal{F}}
\newcommand{\calV}{\mathcal{V}}
\title{Constructions for positional games and applications to domination games}
\author{Ali Deniz Bagdas, Dennis Clemens, Fabian Hamann, Yannick Mogge}
\begin{document}

\maketitle

\begin{abstract}
We present constructions regarding the general behaviour 
of biased positional games, and amongst others show
that the outcome of such a game can differ in an arbitrary way depending on which player starts the game, and that fair biased games can behave highly non-monotonic.
We construct a gadget that helps to transfer such results to Maker-Breaker domination games, and by this we extend a recent result by Gledel, Ir{\v{s}}i{\v{c}}, and Klav{\v{z}}ar, regarding the length of such games. Additionally, we introduce Waiter-Client dominations games, give tight results when they are played on trees or cycles,
and using our transference gadget we show that in general the length of such games can differ arbitrarily from the length of their Maker-Breaker analogue. 
\end{abstract}


\section{Introduction}

\textbf{Biased Maker-Breaker games.}
Maker-Breaker games, a special kind of positional games, have been studied extensively throughout the last 
decades. For a nice overview we recommend the monograph~\cite{hefetz2014positional} 
as well as the survey~\cite{krivelevich2014positional}. 
In most generality,
let a hypergraph $\calH = (X,\calF)$ and two integers $m,b\geq 1$ be given. The \textit{$(m:b)$ Maker-Breaker game} on $(X,\calF)$ is played as follows. Maker and Breaker alternate in moves,
where in a move Maker claims up to $m$ unclaimed elements of the 
\textit{board} $X$,
and Breaker claims up to $b$ unclaimed elements of $X$.
Maker wins if during the course of the game she manages to claim all elements of a \textit{winning set}, i.e.~a hyperedge  from $\calF$, while Breaker wins otherwise.
Surely, this outcome can depend on who makes the first move. Therefore, whenever it makes a difference in the following, we state clearly whom we assume to be the first player.
If $m=b=1$, the game is called \textit{unbiased}; and otherwise it is called \textit{biased}.

A nice property of $(m:b)$ Maker-Breaker games is that these are monotone with respect to each of the biases $m$ and $b$.
That is, roughly speaking, increasing the bias of one
of the players can never be a disadvantage for this player;
see e.g.~\cite{beck2008combinatorial,hefetz2014positional}. 
This observation leads to the natural definition of 
\textit{threshold biases}, which 
have been studied extensively already and
in many cases have proven to be related to properties of random graphs, see e.g.~\cite{bednarska2000biased,ferber2015generating,gebauer2009asymptotic,krivelevich2011critical,nenadov2023probabilistic}. When both biases get increased simultaneously,
we however cannot expect monotonicity in general, and 
interestingly not so much is known when both biases get increased at the same rate.
For instance, a famous result of Beck~\cite{beck2008combinatorial}, supporting a so-called random graph intuition, states that in the $(1:1)$ Maker-Breaker game on the edges of the complete graph $K_n$,  Maker has a strategy to occupy a clique
of size $(2-o(1))\log_2(n)$, but not larger. 
Still, it remains one of the bigger unsolved open problems stated in Beck's monograph (see Problem 31.1 in \cite{beck2008combinatorial}), whether Maker can do the same in the $(2:2)$ Maker-Breaker game, see also~\cite{gebauer2012clique}.
On the other hand, Balogh et al.~\cite{balogh2009diameter} 
considered the 2-diameter game in which Maker's goal is to occupy a spanning subgraph of $K_n$ with diameter 2, and they proved that Breaker wins the $(1:1)$ variant of this game, while Maker has a winning strategy for the $(2:b)$ variant even when $b\leq \frac{1}{9}n^{1/8}(\log n)^{-3/8}$, provided $n$ is large enough. However, when we look only at fair $(b:b)$ games in this case, we can still observe monotonicity, as Maker wins for all $b\geq 2$ if $n$ is large enough.

In contrast to that, for our first result we prove that fair biased Maker-Breaker games can behave very highly non-monotonic. 

\begin{theorem}
\label{thm:construction.MB.not.monotone}
Let $B\subset \mathbb{N}$ be any finite set. Then there exists a hypergraph $\calH = (X,\calF)$, such that
Maker wins the $(b:b)$ Maker-Breaker game on $\calH$ if and only if $b\notin B$, when Maker is the first player.
\end{theorem}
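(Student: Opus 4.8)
The plan is to assemble the hypergraph from small \emph{gadgets}, one for each element of $B$, glued together so that Maker is forced to win all of them at once. Write $B=\{\beta_1,\dots,\beta_r\}$. The key step is, for a single value $\beta$, to construct a gadget hypergraph $G_\beta$ on which Maker, moving first, wins the $(b:b)$ game if and only if $b\neq\beta$. Its winning sets should be designed around three regimes. All winning sets will have size at least $\beta+1$, so that for $b\le\beta$ Maker cannot win by occupying an entire winning set in a single move; on the other hand, for $b\ge\beta+1$ I include many winning sets of size exactly $\beta+1$ with pairwise disjoint ``private'' parts, so that Maker can always occupy one of them essentially at once. For $b\le\beta-1$ Maker should win by a \emph{multi-fork}: the sets are arranged so that she can force a position in which she simultaneously threatens $\beta$ distinct winning sets, each missing a distinct single element; since Breaker destroys at most $b\le\beta-1$ of these threats in his reply, one survives and Maker completes it on her next move. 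At the critical value $b=\beta$ neither mechanism is available to Maker, while Breaker has just enough bias both to rule out the one-move win and to keep the number of live simultaneous threats from ever reaching $\beta+1$, so Breaker wins.

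I expect the construction and analysis of $G_\beta$ to be the main obstacle. One needs a hypergraph of ``threat capacity'' exactly $\beta$: rich enough that against a bias-$(\beta-1)$ Breaker Maker can manufacture — and after each of Breaker's replies re-manufacture — $\beta$ simultaneous threats with distinct missing elements, yet rigid enough that a bias-$\beta$ Breaker has a (presumably pairing-type) strategy that keeps the number of simultaneous threats below $\beta+1$ for the whole game. This will require a carefully layered, probably recursive, construction, an explicit ``staying-ahead'' strategy for Maker, a matching strategy for Breaker, and a verification that the Breaker strategy withstands \emph{every} line of Maker's play rather than just the natural ones.

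Finally I glue the gadgets by a transversal product: on the disjoint union of the gadgets $G_{\beta_1},\dots,G_{\beta_r}$, each first replaced by a large disjoint union of copies of itself, declare the winning sets to be exactly the unions $e_1\cup\dots\cup e_r$ with $e_i$ a winning set of the $i$-th gadget; thus Maker wins the combined game precisely when she can complete a winning set in every gadget. If $b=\beta_i\in B$, then Breaker plays his winning $G_{\beta_i}$-strategy inside one untouched copy of that gadget, using his full bias there each turn and ignoring the rest of the board; by the gadget property Maker never completes a winning set of $G_{\beta_i}$, hence none of the product, and Breaker wins. If $b\notin B$, then Maker wins each $G_{\beta_i}$ as first player in, say, at most $T_i$ rounds on one copy, and she plays the combined game in $r$ phases: in phase $i$ she picks a copy of $G_{\beta_i}$ in which Breaker has not yet played, devotes her full bias to it each turn, and follows her winning $G_{\beta_i}$-strategy there. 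This is legitimate because Breaker's moves outside that copy only help Maker inside it, and because Maker uses at most $T_1+\dots+T_r$ rounds in total — a bound independent of the number of copies — so Breaker can spoil at most $b(T_1+\dots+T_r)$ copies of each gadget and it suffices to take more copies than that. Hence the combined game is a Maker win exactly when $b\neq\beta_i$ for every $i$, that is, exactly when $b\notin B$, which is the theorem.
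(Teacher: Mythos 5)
Your proposal has a genuine gap: the gadget $G_\beta$, which is the entire crux of the argument, is never constructed. You acknowledge it is ``the main obstacle,'' describe only the properties it should have (multi-fork threats for $b\leq\beta-1$, one-move wins for $b\geq\beta+1$, a critical Breaker win at $b=\beta$), and give neither a hypergraph nor a verification. Without the gadget there is no proof. The sketch is also not obviously realizable: for small $b$, say $b=1$, the multi-fork mechanism asks Maker to assemble, one vertex at a time against a defending Breaker, $\beta$ simultaneous threats each a single vertex from completion — a delicate recursive construction you never carry out.

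The gluing step is also incorrect as stated. You form a disjoint union of many copies of each $G_{\beta_i}$ with transversal winning sets, and claim that when $b=\beta_i$ Breaker wins by playing his $G_{\beta_i}$-strategy ``inside one untouched copy of that gadget, using his full bias there each turn and ignoring the rest of the board.'' But guarding a single copy cannot stop Maker from completing a winning set of $G_{\beta_i}$ in a \emph{different} copy that Breaker ignores, and on an ignored copy Maker wins trivially. Repairing this requires a local-response strategy for Breaker over all copies simultaneously, which is not automatic for biased games precisely because fair $(b:b)$ games are not monotone — which is what Theorem~\ref{thm:construction.MB.not.monotone} is about. The paper sidesteps all of this with one explicit hypergraph: writing $B=\{b_1<\cdots<b_\ell\}$ and $b_0:=-1$, it partitions the board into disjoint blocks $V_1,\ldots,V_{b_\ell+1}$ with $|V_i|=b_j$ iff $b_{j-1}+1<i\leq b_j+1$, and takes as winning sets all transversals (sets meeting every block exactly once). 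If $b=b_j\in B$, the $b+1$ blocks $V_1,\ldots,V_{b+1}$ each have size at most $b$, and after Maker's first move Breaker fully occupies an untouched one. If $b\notin B$, Maker claims one vertex of each $V_i$ with $i\leq b$ in her first move, observes that every remaining block has size strictly larger than $b$, and then answers Breaker block-for-block to finish a transversal. This finishes the theorem in a few lines, without gadgets, forks, or gluing.
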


Next to studying threshold biases, another central topic in positional games is to find fast winning strategies, see e.g.~\cite{clemens2012fast,clemens2020fast,ferber2014weak,ferber2012fast,hefetz2009fast,mikalavcki2018fast}. For many results in these papers, e.g. when Maker aims for a perfect matching, a Hamilton cycle or a fixed spanning tree in the $(1:1)$ game on $K_n$,
the outcome turns out to be independent of who starts the game,
for large enough $n$. Therefore, it becomes a natural question whether the time that Maker needs for winning can differ a lot when we compare the case when Maker starts with the case when Breaker starts. Our next result roughly states that almost anything can happen. Additionally, we are able to give an analogous statement when we consider the smallest winning set that Maker can occupy.

\begin{theorem}\label{thm:construction.MB.time.size}
Let $m,b,s,s',t,t'$ be positive integers
with $s'\geq s\geq 2m+1$, $m\leq b$, $t' \geq t\geq \lceil \frac{s}{m} \rceil$ and $t'\geq \lceil \frac{s'}{m'} \rceil$. Then there exists a hypergraph $\mathcal{H}=(X,\mathcal{F})$ which satisfies the following properties:
\begin{enumerate}
    \item[(1)] Let Maker be the first player in the $(m:b)$ Maker-Breaker game on $\mathcal{H}$. Then she has a strategy to occupy a winning set of size $s$ within $t$ rounds, but she can neither win within less than $t$ rounds nor occupy a winning set of size smaller than $s$.
    \item[(2)] Let Maker be the second player in the $(m:b)$ Maker-Breaker game on $\mathcal{H}$. Then she has a strategy to occupy a winning set of size $s'$ within $t'$ rounds, but she can neither win within less than $t'$ rounds nor occupy a winning set of size smaller than $s'$.
\end{enumerate}
\end{theorem}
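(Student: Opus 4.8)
The plan is to build $\mathcal{H}$ as the disjoint union (disjoint boards, union of the two families of winning sets) of two independent gadgets $\mathcal{H}_{\mathrm{I}}$ and $\mathcal{H}_{\mathrm{II}}$, one for each scenario. I want $\mathcal{H}_{\mathrm{I}}$ to be a hypergraph in which the \emph{first} player can occupy a winning set of size exactly $s$ in exactly $t$ rounds and can do no better in either parameter, while the \emph{second} player cannot occupy any winning set at all; and I want $\mathcal{H}_{\mathrm{II}}$ to be a hypergraph in which \emph{every} player needs at least $t'$ rounds and a winning set of size at least $s'$, while the player moving second still realizes exactly $(s',t')$. Granting these, item~(1) follows: on the disjoint union the first player plays inside $\mathcal{H}_{\mathrm{I}}$ and gets $(s,t)$, and she cannot do better anywhere, since inside $\mathcal{H}_{\mathrm{I}}$ this is built in and inside $\mathcal{H}_{\mathrm{II}}$ nobody beats $(s',t')$, which (as $s'\ge s$ and $t'\ge t$) is no improvement. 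Item~(2) follows because the second player is shut out of $\mathcal{H}_{\mathrm{I}}$, hence confined to $\mathcal{H}_{\mathrm{II}}$, where she realizes and cannot beat $(s',t')$.

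Each gadget is assembled from three kinds of building block. An \emph{abundance block} for a parameter $k$ is the family of all $k$-element subsets of a huge ground set: on it either player trivially builds a winning set in $\lceil k/m\rceil$ rounds and no winning set is smaller than $k$, which supplies Maker's positive strategies and pins the minimum size from below. A \emph{priority element} $c$: decreeing that every ``cheap'' winning set contain $c$ while providing a parallel, strictly more expensive family that avoids $c$ forces the asymmetry between the player who opens (and grabs $c$) and the one who does not; the hypothesis $s\ge 2m+1$ enters here, leaving enough room to keep $c$ together with a working slow-down structure inside the cheap part. The delicate ingredient is a \emph{delay block}, whose job is to push the minimum number of rounds up from $\lceil s/m\rceil$ to the prescribed $t$ (and from $\lceil s'/m\rceil$ to $t'$ inside $\mathcal{H}_{\mathrm{II}}$). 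Here I would arrange a ``funnel'' of winning sets that is brittle enough near the top that a suitable Breaker pursuit strategy can repeatedly threaten to destroy Maker's current line and force her to re-commit --- each re-commitment costing her a controlled amount of claims that end up outside her eventual winning set --- yet robust enough further down that Breaker can never block all of Maker's options simultaneously. Tuning the block sizes, the branching, and the length of the funnel so that the forced waste comes out to exactly the missing $t-\lceil s/m\rceil$ rounds while the winning set Maker finally completes has size exactly $s$ is the technical core.

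With the gadgets in hand, Maker's half of each assertion is routine: she ignores the component she does not use and follows her strategy in the other, reading off the stated size and round bounds. For the lower bounds I would let Breaker run, in parallel, the slow-down/blocking strategy of each component: whenever Maker claims $j\le m$ elements inside a component, Breaker replies inside that component according to its strategy, which I would arrange to use at most $m$ claims per round --- so, since $m\le b$, both component strategies fit inside Breaker's budget at once, and a partial Maker move only helps Breaker, by monotonicity. Combining the two guarantees yields that on $\mathcal{H}$ the first player cannot beat $(s,t)$ and the second cannot beat $(s',t')$.

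The main obstacle is the delay block. In a Maker--Breaker game Maker never has to spend a move on defence, so the only lever Breaker has for stretching a win beyond $\lceil s/m\rceil$ rounds is to manoeuvre Maker into committing to partial winning sets that he can then kill; making this genuinely happen needs a hypergraph that is at the same time resilient enough for Maker to still win and brittle enough for Breaker's pursuit to bite, with the two effects balanced to the exact round count and iterated so the effect persists as long as required. A secondary point is the tempo bookkeeping for the disjoint union under biased play, which is precisely what $m\le b$ and the ``at most $m$ claims per round'' form of the component strategies are there to handle.
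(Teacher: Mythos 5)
Your high-level architecture -- a disjoint union of two gadget hypergraphs, one of which the first player can win with the target parameters $(s,t)$ while the second player cannot win it at all, and a fallback gadget supplying $(s',t')$, combined with a "run all component Breaker strategies in parallel" argument enabled by $m\le b$ -- is in the same spirit as the paper's proof, which concretely takes $b$ disjoint copies of a gadget $\mathcal{H}(m,b,s,t)$ together with one copy of $\mathcal{H}(m,b,s',t')$, so that Breaker's opening move of $b$ vertices kills all the fast copies at once. But what you repeatedly call "the technical core" and "the main obstacle" -- a concrete \emph{delay block}, i.e.\ an $s$-uniform hypergraph on which Maker moving first needs exactly $t\ge\lceil s/m\rceil$ rounds (strictly more than the trivial $\lceil s/m\rceil$ when $t$ is larger), while Breaker with a single free opening vertex wins outright -- is exactly where the real work lies, and you do not construct it. Neither "a funnel brittle near the top and robust further down" nor "a priority element $c$" pins this down; in the paper this is carried by the directed multigraphs $\vec{G}_t(b)$ and $\vec{H}_t(b)$, an auxiliary game $\mathcal{G}(b,\vec{F},S)$ on them, the round-by-round invariants (P1),(P2),(P1'),(P2') of Lemmas \ref{lem:game.directed.G} and \ref{lem:game.directed.H}, and the lift to the $s$-uniform hypergraph $\mathcal{H}(m,b,s,t)$ of Definition \ref{def:hypergraph.H} with its family $\mathcal{V}$ of thick $m$-sets (Lemma \ref{lemma:speed.on.hypergraph}). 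Without something of this kind your proposal has no handle on any case with $t>\lceil s/m\rceil$, so as written it does not prove the theorem.

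There is also a bookkeeping error in the way you combine the component Breaker strategies. You claim each component strategy "can be arranged to use at most $m$ claims per round," so that several of them together fit inside bias $b$. That is not available: in a gadget like $\mathcal{H}(m,b,s,t)$, Breaker genuinely needs the full bias $b$ in a single component (the delay is created precisely by matching the branching factor to $b$), and an $(m:m)$ version of the same gadget with $m<b$ would fail. If Maker could split a move across two components, a naive union of the component strategies could require up to $2b$ replies. The paper sidesteps this not by monotonicity but through the structural Proposition \ref{prop:branched.path.maker.plays.on.v}: because each $V\in\mathcal{V}$ is either contained in or disjoint from every winning set, and every vertex outside $\bigcup\mathcal{V}$ lies in at most one winning set (Observation \ref{obs:branched.path.overlap}), Maker may be assumed to spend each entire move inside a single $V$ or on completing one winning set, which confines her to one component per round and lets Breaker reply there with his full budget. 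An appeal to "a partial Maker move only helps Breaker" does not yield this confinement by itself.
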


Note that all conditions on the parameters
$m,b,s,s',t,t'$ in the theorem above are optimal
except for maybe the inequalities $s\geq 2m+1$ and $m\leq b$. We discuss this in more detail in the concluding remarks.

\medskip

\textbf{Maker-Breaker domination games.}
Many publications on positional games consider the case when
$X$ is the edge set of some given graph $G$, and $\mathcal{F}$
contains the edge sets of e.g. all spanning subgraphs of $G$ that  
can be described by a monotone graph property, such
as being connected~\cite{gebauer2009asymptotic,lehman1964solution}, 
containing a Hamilton cycle~\cite{hefetz2009sharp,krivelevich2011critical}, or similar.
More recently, Duch\^ene et al.~\cite{duchene2020maker} introduced
\textit{Maker-Breaker domination games}, which in contrast to the above games are played on the vertex set of a given graph $G$ by two players
called Dominator and Staller, where Dominator (who is playing as Maker) wins if and only if she manages to
occupy all vertices of a dominating set. 
Note that the renaming of the players Maker and Breaker 
as Dominator and Staller is done to be consistent with the usual domination games; 
for an overview on these games we recommend the book~\cite{brevsar2021domination}.

While Duch\^ene et al.~\cite{duchene2020maker} proved that deciding who wins an unbiased Maker-Breaker domination game
is PSPACE-complete,
Gledel et al.~\cite{gledel2019maker} put a focus on the number of rounds
which Dominator needs to win. Following their
notation, let $\gamma_{MB}(G,m:b)$ denote the smallest number of rounds in which Dominator can always win the $(m:b)$ domination game on $G$, provided that she starts the game, 
and where we set $\gamma_{MB}(G,m:b)=\infty$ if Dominator does not have a winning strategy. Similarly, let
$\gamma_{MB}'(G,m:b)$ denote the smallest number of rounds 
for the case when Staller starts the game,
and for short, let
$\gamma_{MB}(G) := \gamma_{MB}(G,1:1)$ and
$\gamma'_{MB}(G) := \gamma'_{MB}(G,1:1)$.
Gledel et al.~\cite{gledel2019maker} determined
$\gamma_{MB}(G)$ and $\gamma'_{MB}(G)$
precisely when $G$ is a tree or a cycle,
and partial results for 
Cartesian products~\cite{dokyeesun2023maker,forcan2020maker}
and Corona products~\cite{divakaran2024maker}
of graphs
were obtained afterwards as well.
Recently, biased variants of these games were considered in \cite{bagdas2024biased,brevsar2025thresholds}.
Additionally, Gledel et al.~\cite{gledel2019maker}
provided examples which show that the domination number $\gamma(G)$ of a graph $G$
and the Maker-Breaker domination numbers
$\gamma_{MB}(G)$ and $\gamma'_{MB}(G)$ can take almost arbitrary values.

\begin{theorem}\label{thm:gledel.construction}[Theorem~3.1~in~\cite{gledel2019maker}]
If $G$ is a graph, then $\gamma(G)\leq \gamma_{MB}(G)\leq \gamma'_{MB}(G)$. Moreover, for any integers $r,t,t'$ with
$2\leq r\leq t\leq t'$, there exists a graph $G$ such that $\gamma(G)=r$, $\gamma_{MB}(G)=t$ and
$\gamma_{MB}'(G)=t'$.
\end{theorem}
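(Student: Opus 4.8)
The plan is to establish the two inequalities first and then realise the triples $(r,t,t')$ by an explicit graph. \textbf{The two inequalities.} That $\gamma(G)\le\gamma_{MB}(G)$ is immediate: in the $(1:1)$ game Dominator claims exactly one vertex per round, and when she wins the set of vertices she has claimed is a dominating set, so she has used at least $\gamma(G)$ rounds (and the inequality is vacuous if $\gamma_{MB}(G)=\infty$). For $\gamma_{MB}(G)\le\gamma'_{MB}(G)$ I would invoke the standard extra-move monotonicity of Maker--Breaker games: the Dominator-start game on $G$ is just the Staller-start game in which Staller opens with a ``pass'', and a pass is never advantageous for Staller, who plays the Breaker role; equivalently, moving a vertex out of Staller's set into the pool of free vertices can only decrease the number of rounds Dominator needs. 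Hence Dominator wins the Dominator-start game at least as quickly as the Staller-start game.

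\textbf{The gadget.} For the realisation I would use the friendship graph $F_q$ (the Dutch windmill: $q$ triangles glued at a single common vertex $c$), with $F_1=K_3$. The local facts to prove are $\gamma(F_q)=\gamma_{MB}(F_q)=1$ and $\gamma'_{MB}(F_q)=q$. Indeed $c$ is universal, giving $\gamma(F_q)=1$, and Dominator moving first wins in one round by claiming $c$. If Staller moves first she claims $c$, after which, for each of the $q$ triangles $\{c,x,y\}$, Dominator must still acquire one of $x,y$; moreover, once Staller claims one of $x,y$, Dominator is forced to take the other on her next move, for otherwise that triangle can never be dominated. Since Staller can create at most one such emergency per round and Dominator can always parry it while securing one triangle, Dominator wins in exactly $q$ rounds, and no fewer, since the $q$ pairs $\{x,y\}$ are pairwise disjoint.

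\textbf{The construction.} Given $2\le r\le t\le t'$, put $q_1:=t'-r+1$ and $q_2:=t-r+1$, so $q_1\ge q_2\ge 1$, and let
\[
 G\ :=\ F_{q_1}\ \sqcup\ F_{q_2}\ \sqcup\ (r-2)\,K_3 ,
\]
where $(r-2)\,K_3$ denotes $r-2$ disjoint triangles (and is empty when $r=2$). Then $\gamma(G)=1+1+(r-2)=r$. For $\gamma_{MB}(G)$, Dominator moves first and claims the centre of the larger windmill $F_{q_1}$; Staller's only way to cost Dominator anything is to steal the centre of $F_{q_2}$ (claiming a vertex inside a triangle costs Dominator nothing, since a dominating vertex of that component is still reachable), after which Dominator needs $q_2$ further rounds for $F_{q_2}$ and one round for each remaining triangle, so $\gamma_{MB}(G)=1+q_2+(r-2)=t$. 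For $\gamma'_{MB}(G)$, Staller moves first and steals the centre of the costlier windmill $F_{q_1}$; Dominator replies by securing the centre of $F_{q_2}$, and then needs $q_1$ rounds for $F_{q_1}$ and $r-2$ for the triangles, giving $\gamma'_{MB}(G)=q_1+1+(r-2)=t'$. In each case the lower bound is a counting argument (disjoint components need disjoint Dominator vertices, and if Dominator does not secure a windmill centre immediately then Staller steals it, which only helps Staller), while the upper bound rests on Dominator answering, on her very next move, the at most one emergency that Staller can create per round within a single component, and spending her remaining moves on the bookkeeping above.

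\textbf{Main obstacle.} The delicate part is the disjoint-union analysis: one must argue that Staller gains nothing by spreading her moves over the several components---that Dominator can service them all ``in parallel'' at a total cost equal to the sum of the local costs---and one must pin the round counts down exactly, including the asymmetry between the two games that is responsible for the gap $t'-t$. I expect the cleanest framework is a defensive invariant (after each Dominator move no component carries a pending emergency) together with a tally of the ``free'' Dominator moves still available before every windmill and every triangle has been secured.
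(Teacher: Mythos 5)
Note first that the paper does not prove this statement itself: Theorem~\ref{thm:gledel.construction} is quoted from Gledel, Ir\v{s}i\v{c} and Klav\v{z}ar, and the paper's own contribution in this direction is the strengthening Theorem~\ref{thm:construction.Gledel.improved}. So the comparison is between your argument and the machinery the paper develops for that strengthening.

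Your proof is correct. The friendship graph $F_q$ does have $\gamma(F_q)=\gamma_{MB}(F_q)=1$ and $\gamma'_{MB}(F_q)=q$: if Staller does not open on the hub $c$ Dominator wins in one move, and if Staller does take $c$ the $q$ pairwise disjoint leaf pairs $\{x_i,y_i\}$ give both the lower bound $q$ and, via pairing, the upper bound $q$. For $G=F_{q_1}\sqcup F_{q_2}\sqcup(r-2)K_3$ the additive lower bounds $1+q_i+(r-2)$ follow from counting disjoint ``must-hit'' sets, and the upper bounds follow from the defensive invariant you describe: after each Dominator move, every $F$-pair Staller has touched already contains a Dominator vertex, and every $K_3$ with two Staller vertices already contains a Dominator vertex. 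Since Staller claims one vertex per round and every vertex lies in a single component, at most one new emergency can arise per round; Dominator answers it or, if there is none, proactively secures a fresh pair or triangle, so each of her moves secures exactly one new unit and the counts match. The two preliminary inequalities are also fine; the second is the usual phantom-move/strategy-stealing argument.

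The paper's route to (the stronger) Theorem~\ref{thm:construction.Gledel.improved} is completely different: it first builds a suitable hypergraph from directed multigraphs (Definitions~\ref{def:directed.G}--\ref{def:hypergraph.H} and Lemma~\ref{lemma:speed.on.hypergraph}) and then converts the Maker--Breaker game on that hypergraph into a domination game via the Transference Lemma~\ref{lemma:from.H.to.G}. That machinery is much heavier, but it lets one prescribe simultaneously the domination number, the smallest claimable dominating set, \emph{and} the round counts, for arbitrary biases $(m:b)$; the windmill gadget has no biased analogue and controls only the round counts. Conversely, your elementary construction handles the unbiased statement in full generality, including the boundary case $r=t=2$, which Theorem~\ref{thm:construction.Gledel.improved} cannot reproduce because of its hypothesis $s\ge 2m+1=3$.
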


We note that similar constructive results were obtained very recently in~\cite{divakaran2025maker} for \textit{total domination games}, in which Maker's goal is to occupy a total dominating set.

\smallskip

Next to considering the number of rounds, it seems natural to ask for the smallest size of a dominating set that Dominator can
always claim when playing the $(m:b)$ domination game on a graph $G$. Let $s_{MB}(G,m:b)$ and $s_{MB}(G)=s_{MB}(G,1:1)$ denote this values when Dominator starts the game, and let $s'_{MB}(G,m:b)$ and $s'_{MB}(G)=s_{MB}(G,1:1)$ denote this value when Staller starts, where again we set such a value to $\infty$ if Dominator does not have a winning strategy.
A priori it is not clear why 
the minimal number of rounds and the minimal size 
of a dominating set that Dominator can achieve
should be different. 
In fact, from the proofs in~\cite{gledel2019maker} 
it can be deduced easily that
$\gamma_{MB}(G)=s_{MB}(G)$ and
$\gamma'_{MB}(G)=s'_{MB}(G)$ hold when 
$G$ is a tree or a cycle.
Moreover, it is not hard to check that
if $G$ is the Erd\H{o}s-R\'enyi random graph $G(n,p)$ with constant $p\in (0,1)$ or a random geometric graph $\mathcal{G}(n,r)$ with constant radius $r\in (0,1)$, then with high probability the parameters
$\gamma_{MB}(G)$ and $s_{MB}(G)$ are asymptotically the same (and maybe equal), see also the concluding remarks.
However, in contrast to this, we can construct graphs where the mentioned game parameters can take almost arbitrary values, hence extending Theorem~\ref{thm:gledel.construction}.

\begin{theorem}
\label{thm:construction.Gledel.improved}
Let $m,b,r,s,s',t,t'$ be positive integers
with $s'\geq s\geq \max\{r,2m+1\}$, $m\leq \min\{b,r-1\}$, $t' \geq t\geq \lceil \frac{s}{m} \rceil$ and $t'\geq \lceil \frac{s'}{m'} \rceil$. Then
there exists a graph $G$ such that 
\begin{align*}
& \gamma(G)=r, \\
& s_{MB}(G,m:b)=s ~ \text{ and } ~ \gamma_{MB}(G,m:b)=t\\
& s'_{MB}(G,m:b)=s' ~ \text{ and } ~ \gamma'_{MB}(G,m:b)=t' .
\end{align*}
\end{theorem}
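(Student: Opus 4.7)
The plan is to derive Theorem~\ref{thm:construction.Gledel.improved} from Theorem~\ref{thm:construction.MB.time.size} via a transference gadget that turns the hypergraph produced by Theorem~\ref{thm:construction.MB.time.size} into a graph whose biased Maker-Breaker domination game is equivalent to the original biased Maker-Breaker game, augmented by extra structure that pins down the domination number to $r$.

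Concretely, I would first apply Theorem~\ref{thm:construction.MB.time.size} to parameters $(m,b,\tilde s,\tilde s',\tilde t,\tilde t')$ obtained from $(m,b,s,s',t,t')$ by subtracting the fixed contribution the gadget and the padding structure will make to the winning-set size and round count; this yields a hypergraph $\calH=(X,\calF)$ whose $(m:b)$ Maker-Breaker game realises the desired time-and-size profile for each starting player. I would then build $G$ in two layers. The gadget layer introduces, for every $F\in\calF$, a witness vertex $u_F$ with $N(u_F)=F$ together with several twin leaves on $u_F$, designed so that $u_F$ can only be dominated from within $F\cup\{u_F\}$ and so that Staller's moves on any $u_F$ or its leaves are provably wasteful (Dominator can always repair such a move locally by claiming another leaf of the same vertex). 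The intended effect is that the $(m:b)$ domination game on the resulting graph runs in lockstep with the $(m:b)$ Maker-Breaker game on $\calH$, both in the minimum size of Dominator's winning set and in the minimum number of rounds she needs. The padding layer consists of a small tree or disjoint union of pendants with domination number exactly $r$, attached so as to raise $\gamma(G)$ to $r$ without affecting the game value: the assumption $s\geq r$ guarantees that this padding fits inside Dominator's target set of size $s$, while $m\leq r-1$ prevents her from processing the padding in a single move.

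Verification would then proceed by combining the strategies supplied by Theorem~\ref{thm:construction.MB.time.size} with a routine that handles the gadget and the padding in exactly the expected number of extra moves; matching upper and lower bounds on the four parameters $s_{MB},\gamma_{MB},s'_{MB},\gamma'_{MB}$, for both starting-player conventions, follow from the gadget correspondence. The hard part will be the gadget design itself: it must simultaneously preserve both size parameters $s,s'$ and both time parameters $t,t'$, across both starting-player orders and for arbitrary biases $(m:b)$, showing at once that Dominator cannot beat the bounds by exploiting "free" dominations inside the gadget and that Staller cannot push them higher by luring Dominator into wasting moves there. The hypotheses $s\geq 2m+1$ and $m\leq b$ inherited from Theorem~\ref{thm:construction.MB.time.size}, together with the new conditions $s\geq r$ and $m\leq r-1$, are precisely the slack needed to make this balance work.
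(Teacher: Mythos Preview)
Your proposal has the right high-level shape---apply Theorem~\ref{thm:construction.MB.time.size}, transfer to a domination game via a gadget, then adjust for $r$---but the gadget you describe is oriented the wrong way and does not give the correspondence you claim.

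If you attach a vertex $u_F$ with $N(u_F)=F$ for every $F\in\calF$, then a set $D\subset X$ dominates all the $u_F$ precisely when $D$ meets every hyperedge, i.e.\ when $D$ is a \emph{vertex cover} of $\calH$. This is the dual of the condition you need: in the Maker--Breaker game on $\calH$, Maker's goal is to claim a set \emph{containing} some $F\in\calF$, not a transversal. So your gadget makes the domination game on $G$ track the transversal game on $\calH$, in which the roles of Maker and Breaker are effectively reversed, and the parameters $s,s',t,t'$ produced by Theorem~\ref{thm:construction.MB.time.size} will not carry over. A two-edge example already exhibits the mismatch: with $\calF=\{\{1,2\},\{2,3\}\}$ the smallest winning set has size $2$, but the smallest transversal---and hence the smallest dominating set of your $G$ restricted to $X$---is $\{2\}$, of size $1$. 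The paper's transference lemma (Lemma~\ref{lemma:from.H.to.G}) fixes the orientation by attaching, for every vertex cover $A\in\calV(\calH)$, a large block $X_A$ of vertices with neighbourhood exactly $A$ (and making $X$ a clique); then $D\subset X$ dominates all blocks iff $X\setminus D$ fails to be a vertex cover, iff $D\supseteq F$ for some $F\in\calF$, which is the Maker condition.

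Two secondary points also need revision. First, with the correct gadget the transfer is \emph{exact}: no ``subtraction of fixed contributions'' is needed, since the domination game on $G$ and the Maker--Breaker game on $\calH$ have identical optimal size and time for each starting-player order. Second, the way to pin $\gamma(G)=r$ is not to graft padding onto $G$ but to add a single disjoint hyperedge $F_\gamma$ of size $r$ to $\calH$ before applying the transference; by Claim~\ref{clm:transference.claim2} this forces $\gamma(G)=r$, and since $r>m$ Breaker can always spoil $F_\gamma$, so it leaves all four game parameters untouched.
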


We again discuss the optimality of our conditions in Section~\ref{sec:concluding}. 
Also note that this theorem implies Theorem~\ref{thm:construction.MB.time.size}, but later we actually need Theorem~\ref{thm:construction.MB.time.size}
for the proof of Theorem~\ref{thm:construction.Gledel.improved}.
In fact, we construct a gadget (see Lemma~\ref{lemma:from.H.to.G}) which allows us to transfer 
constructive results from general positional games
to domination games. 

With the help of the same transference argument it is possible to prove an analogue of Theorem~\ref{thm:construction.MB.not.monotone} for Maker-Breaker domination games. Additionally, we are able to prove the following rather non-intuitive results which demonstrate that Dominator might need to employ very different strategies depending on whether she wants to win as fast as possible or claim a dominating set of smallest possible size.
The first result roughly states that 
Dominator can be forced to claim an arbitrarily large 
dominating set before she is able to claim a smallest possible
dominating set, i.e. she might need to play on after she already claimed a dominating set if she wants to minimize the size.

\begin{theorem}
\label{thm:construction.large.before.small}
For any biases $m \leq b$ and any integers $t> s \geq 2m + 1$,
there exists a graph $G$ such that
$s_{MB}(G,m:b)=s$, but Dominator cannot occupy
a dominating set of size $s$ before she has  
occupied another minimal dominating set of size $t$.
\end{theorem}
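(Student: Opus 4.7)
My plan is to apply the transference gadget (Lemma~\ref{lemma:from.H.to.G}) to a carefully designed hypergraph $\mathcal{H}=(X,\mathcal{F})$; the resulting graph $G$ will inherit the two required properties: $s_{MB}(G,m:b)=s$, and every play in which Dominator eventually occupies a dominating set of size $s$ must pass through a state in which her claimed set equals some minimal dominating set of size $t$. Because Lemma~\ref{lemma:from.H.to.G} translates (minimal) winning sets of $\mathcal{H}$ into (minimal) dominating sets of $G$ of the same cardinality, it suffices to design $\mathcal{H}$ and exhibit strategies for both players ensuring the analogous property at the hypergraph level.

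To construct $\mathcal{H}$, I would combine two ingredients: a \emph{deferral gadget} and a \emph{payoff gadget}, sitting on disjoint parts of the board. The deferral gadget contains a distinguished winning set $W_t\in\mathcal{F}$ of size $t$, together with auxiliary winning sets that Staller uses to force Maker's moves: each forcing set has the form $\{v,x\}$ with $v\in W_t$ and $x$ a "threat" element, so that whenever Staller claims such an $x$, Maker is forced to reply at the corresponding $v$ or else lose a winning set. By giving each $v\in W_t$ a private witness (a small winning set containing only $v$ from among the forced elements), I can ensure that $W_t$ itself is a minimal winning set of $\mathcal{F}$, which under the gadget becomes a minimal dominating set of the correct size. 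The payoff gadget, on a disjoint part of the board, contains a winning set $W_s$ of size $s$; I would calibrate it so that Maker can complete $W_s$ only after the deferral phase ends, because any premature Maker move inside the payoff gadget costs her one of the $m$ replies she needs to neutralise Staller's active threats.

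Granting the hypergraph, the two halves of the theorem follow as usual: Maker's strategy (first answer all deferral threats to complete $W_t$ inside her claimed set, then play inside the payoff gadget to complete $W_s$) witnesses $s_{MB}(G,m:b)\leq s$, while Staller's strategy (keep firing threats of the deferral gadget, and only turn to blocking $W_s$ after Maker's forced replies have accumulated $W_t$) both prevents any smaller winning set from being completed and enforces the "first $W_t$, then $W_s$" ordering of Maker's claimed set. The hypothesis $m\leq b$ is used to guarantee that Staller can produce $m$ live threats per round (so all of Maker's bias is consumed by forced replies), and the hypothesis $s\geq 2m+1$ ensures the payoff winning set is large enough that Maker cannot collapse it in a single move once she turns to it.

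The main obstacle I expect is the design of the deferral gadget so that Maker genuinely has no shortcut: she must be unable to interleave payoff moves with threat replies, and she must be unable to complete some unforeseen winning set of size $< s$ by combining partial progress in both gadgets. I would address this by layering the threats so that any single Maker move spent outside $W_t$ during the deferral phase loses a threat winning set, and by choosing the hypergraph structure of the payoff gadget so that its only winning sets have size at least $s$ and are disjoint from $W_t$. A secondary technical point is confirming that the gadget of Lemma~\ref{lemma:from.H.to.G} sends the minimal winning set $W_t$ to a minimal dominating set (not merely a dominating set) of size exactly $t$; this is where the private witnesses attached to $W_t$ pay off, as each witness forces its corresponding $v\in W_t$ to be indispensable for domination.
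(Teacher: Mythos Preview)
Your core mechanism is broken. You propose size-$2$ winning sets $\{v,x\}$ as ``forcing sets,'' claiming that when Staller takes $x$, Maker is ``forced to reply at the corresponding $v$ or else lose a winning set.'' But in a Maker--Breaker game Maker's goal is to occupy \emph{one} winning set, not to protect all of them: if Staller takes $x$ then the edge $\{v,x\}$ simply dies and Maker has no reason whatsoever to touch $v$. Breaker can never compel Maker to play a specific element; he can only destroy hyperedges. Since your deferral and payoff gadgets sit on disjoint boards, nothing stops Maker from ignoring the deferral gadget entirely and playing straight for $W_s$. Worse, the forcing sets are themselves winning sets of size~$2$, so Maker can just claim both elements of one of them, giving $s_{MB}(G,m:b)\le 2<2m+1\le s$ and contradicting the first conclusion of the theorem. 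Your ``private witness'' argument for minimality is also off: a winning set $P_v$ with $P_v\cap W_t=\{v\}$ does not make $v$ indispensable in $W_t$; what is needed is simply that no hyperedge lies strictly inside $W_t$.

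The paper sidesteps the impossibility of forcing Maker's moves by a completely different device: it takes the speed-controlled hypergraph $\mathcal{H}(m,b,s,\lceil t/m\rceil+1)$ on a vertex set $X$ and then declares \emph{every} $t$-subset of $X$ to be a hyperedge. The size-$s$ edges and the size-$t$ edges now live on the \emph{same} board. Against Breaker's delaying strategy (Lemma~\ref{lemma:speed.on.hypergraph}(3)), Maker needs $\lceil t/m\rceil+1$ rounds on $X$ to reach any size-$s$ winning set, so by then she has already claimed more than $t$ elements of $X$; the first $t$ of them automatically form a winning set (hence a dominating set of $G$ via Claim~\ref{clm:transference.claim1}), and that set is minimal because no size-$s$ winning set has yet been completed and those are the only smaller hyperedges. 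The key idea you are missing is that Maker is never forced into a particular $W_t$; rather, the construction makes it impossible for her to accumulate enough moves for a size-$s$ win without \emph{incidentally} passing through some size-$t$ winning set along the way.
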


The second result states that there are graphs for which Dominator can only achieve optimal time if she claims an arbitrarily large dominating set, while the smallest possible dominating set can take arbitrarily many rounds for her to claim, i.e. there are cases where Dominator needs to decide which goal she wants to pursue, since she cannot achieve both with the same strategy.

\begin{theorem}
\label{thm:construction.large.faster.than.small}
For any biases $m \leq b$ and any integers 
$s'\geq s\geq 2m + 1$, and $t' \geq t \geq \lceil\frac{s'}{m}\rceil$
there exists a graph $G$ such that
$\gamma_{MB}(G,m:b)=t$ and $s_{MB}(G,m:b)=s$, but
in the $(m:b)$ Maker-Breaker domination game on $G$ Breaker has a strategy such that all of the following hold:
\begin{itemize}
 \item Maker cannot claim a dominating set of size smaller than $s$.
 \item Maker cannot win in less than $t$ rounds.
 \item Maker cannot claim a dominating set of size smaller than $s'$ if she wins in less than $t'$ moves.
\end{itemize}
\end{theorem}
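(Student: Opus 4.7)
The plan is to construct the desired graph by first building a hypergraph with the analogous properties and then applying the transference gadget of Lemma~\ref{lemma:from.H.to.G}. At the hypergraph level, I would take a disjoint union of two building blocks supplied by Theorem~\ref{thm:construction.MB.time.size}: one tuned to the ``win in $t$ rounds with a set of size $s'$'' optimum, and the other to the ``win in $t'$ rounds with a set of size $s$'' optimum. Breaker's task in the combined game will then be to run the two blocks' defences in parallel, thereby forcing Maker to commit to exactly one block and realising the tradeoff.

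Concretely, I would apply Theorem~\ref{thm:construction.MB.time.size}(1) twice on disjoint vertex sets to obtain $\calH_1 = (X_1, \calF_1)$ in which Maker as first player has an $(m:b)$ strategy winning in exactly $t$ rounds with a set of size exactly $s'$, together with a Breaker strategy $\sigma_1$ simultaneously enforcing both $t$ and $s'$ as lower bounds, and analogously $\calH_2 = (X_2, \calF_2)$ for the pair $(t', s)$ with a Breaker strategy $\sigma_2$. The hypotheses $s' \geq s \geq 2m+1$, $m \leq b$ and $t' \geq t \geq \lceil s'/m\rceil$ justify both applications (for $\calH_2$ use $t' \geq t \geq \lceil s'/m\rceil \geq \lceil s/m\rceil$). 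Setting $\calH := (X_1 \sqcup X_2, \calF_1 \cup \calF_2)$, the winning sets of $\calH$ are exactly those of $\calH_1$ or $\calH_2$. A final call to Lemma~\ref{lemma:from.H.to.G} then produces a graph $G$ that inherits the game parameters from $\calH$.

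The upper bounds on $\calH$ are immediate: Maker either plays her $\calH_1$-optimal strategy while ignoring $X_2$, winning in $t$ rounds with a set of size $s'$, or plays her $\calH_2$-optimal strategy while ignoring $X_1$, winning with a set of size $s$ in $t'$ rounds. For the matching lower bounds together with the tradeoff clause, Breaker uses the single combined strategy obtained by running $\sigma_1$ on $X_1$ and $\sigma_2$ on $X_2$ in parallel: she tracks $M_i$, Maker's cumulative number of moves in $X_i$, and the numbers $v_i = \lfloor M_i/m\rfloor$ of completed ``virtual rounds'' of the simulated $(m:b)$ game on $\calH_i$; whenever $v_i$ increases she feeds Maker's latest $m$ moves in $X_i$ into $\sigma_i$ to queue up the next $b$ response moves in $X_i$, and in each real round she services both queues eagerly within her budget of $b$ moves. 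Because $v_1 + v_2 \leq r$ after $r$ real rounds, her cumulative budget $br$ always covers the combined demand $b(v_1 + v_2)$, and the slack $b \geq m$ absorbs rounding issues. Consequently, if Maker closes some $W \in \calF_{i^*}$ after $r$ rounds, the guarantees of $\sigma_{i^*}$ inside $\calH_{i^*}$ give $|W| \geq s_{i^*}$ and $v_{i^*} \geq t_{i^*}$, and hence $r \geq v_{i^*} \geq t_{i^*}$. With $(s_1, t_1) = (s', t)$ and $(s_2, t_2) = (s, t')$ this produces $|W| \geq s$ (first condition) and $r \geq t$ (second condition), while $r < t'$ forces $i^* = 1$ and hence $|W| \geq s'$ (third condition).

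The main obstacle I anticipate is making the parallel simulation rigorous. In the clean $(m:b)$ game on $\calH_i$, Breaker reacts after each full batch of $m$ Maker moves, but in the combined game Maker may split her $m$ moves between $X_1$ and $X_2$, so the virtual rounds on $\calH_i$ need not align with real rounds and Breaker may be forced to spread her $\sigma_i$-responses over several consecutive real rounds. The subtle point is to argue that such a delay never lets Maker claim an element that $\sigma_i$ would already have blocked in the clean game. The cleanest way to ensure this is to observe (or arrange, possibly by revisiting the construction underlying Theorem~\ref{thm:construction.MB.time.size}) that $\sigma_i$ can be chosen to be of pairing or local-potential type, so that Breaker's reply to each individual Maker move in $X_i$ depends only on that move and can be played in the same real round, consuming at most $m$ of her $b \geq m$ moves per round.
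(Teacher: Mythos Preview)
Your high-level plan---take a disjoint union of two hypergraph blocks tuned to $(s',t)$ and $(s,t')$, then apply the transference lemma---is exactly the paper's approach. The paper, however, works directly with the specific blocks $\mathcal{H}_1=\mathcal{H}(m,b,s',t)$ and $\mathcal{H}_2=\mathcal{H}(m,b,s,t')$ from Definition~\ref{def:hypergraph.H} rather than treating Theorem~\ref{thm:construction.MB.time.size} as a black box, and this choice is precisely what closes the gap you yourself flag.

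The problem is exactly the one you name: the parallel budget simulation does not guarantee that Breaker's queued $\sigma_i$-responses arrive before Maker's next moves on $X_i$, and your proposed repair---a move-by-move local response consuming at most $m$ of Breaker's $b$ moves---does not hold for the actual delaying strategy. In the underlying digraph game (Lemma~\ref{lem:game.directed.G}(3)), Breaker's response to Maker claiming a vertex is to claim \emph{all} outgoing edges at some vertex; translated to $\mathcal{H}(m,b,s,t)$ this genuinely consumes all $b$ of Breaker's moves, not $m$, so when $b>m$ there is no slack for the other block.

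The paper sidesteps the synchronisation issue via Proposition~\ref{prop:branched.path.maker.plays.on.v}: since $\mathcal{H}_1\sqcup\mathcal{H}_2$ still satisfies its hypotheses, one may assume without loss that Maker's entire round lies inside a single $\mathcal{H}_i$, and Breaker then simply replies on that same block with the delaying strategy of Lemma~\ref{lemma:speed.on.hypergraph}(3). Each block thus becomes a genuine $(m:b)$ subgame with no interleaving, and all three bullet points follow immediately. The paper additionally invokes Remark~\ref{remark:skipping.move} to show Breaker can even outright prevent a win on one of the two blocks, though the simpler ``always delay on the active block'' version already suffices for the theorem as stated. In short, the missing ingredient in your argument is not a budget inequality but the structural reduction supplied by Proposition~\ref{prop:branched.path.maker.plays.on.v}.
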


\medskip

\textbf{Waiter-Client domination games.}
In the last years, \textit{Waiter-Client games} (earlier called Picker-Chooser games, see e.g.~\cite{beck2008combinatorial}) have received increasing attention, ranging from results on fast winning strategies~\cite{clemens2020fast,dvovrak2021waiter} 
over biased games~\cite{bednarska2016manipulative,hefetzCW,nenadov2023probabilistic} to games played on random graphs~\cite{clemens2021waiter,hefetz2017waiter}. 
In the following we stick to the case of unbiased Waiter-Client games only. On a given hypergraph $\calH = (X,\calF)$, these games are played almost
the same way as Maker-Breaker games with the following difference: In every round, Waiter chooses two unclaimed elements of the board $X$ and then Client decides which 
of these elements goes to Waiter while the other one goes to Client. (If in the last round there is only
one unclaimed element left, then it is claimed by Client.) 
Waiter wins if and only if she manages to claim all 
elements of a winning set from $\calF$.\footnote{In some papers,
Waiter-Client games are defined so that Waiter's goal is to force Client to claim a winning set; and then accordingly if there is a unique element in the last round, it goes to Waiter. Both versions are essentially the same.}

So far, domination games have not been studied in this setting. So, we aim to give first results for Waiter-Client domination games and also study how Waiter-Client and Maker-Breaker domination games are related.
Given a graph $G$, we define the \textit{Waiter-Client domination game} on $G$ in the obvious way: Dominator (playing as Waiter) offers two unclaimed vertices of $G$ and then 
Staller (playing as Client) picks one of these vertices for himself and the other goes to Dominator.
In accordance with previous notation,
we denote with 
$\gamma_{WC}(G)$
the smallest number of rounds in which
Dominator can always occupy a dominating set
in the Waiter-Client domination game on $G$,
and we let $s_{WC}(G)$ denote the size of the smallest
dominating set that Waiter can always claim.
For our first results in this setup,
we see that for cycles and trees
the game behaves the same way
as in the Maker-Breaker setting~\cite{gledel2019maker} (when Breaker starts).

\begin{theorem}\label{thm:WC.trees}
Let $T$ be a tree on $n$ vertices.
If $T$ has a perfect matching, then
$$\gamma_{WC}(T) = s_{WC}(T) = \frac{n}{2} .$$
In all other cases, Dominator does not win the $(1:1)$
Waiter-Client domination game on $T$.
\end{theorem}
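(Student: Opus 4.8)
The plan is to treat the perfect-matching case and the no-matching case separately, reducing both to the behaviour of a single Client (i.e.\ Staller) strategy.

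First, the easy half. Since Dominator (playing as Waiter) gains exactly one vertex per round, at the end of the game her set $D$ has exactly $\lfloor n/2\rfloor$ vertices, and in particular $s_{WC}(G)\le\gamma_{WC}(G)$ holds for every graph $G$. When $T$ has a perfect matching $M$, Dominator wins in $n/2$ rounds by the obvious pairing strategy: each round she offers the two endpoints of a yet-untouched edge of $M$; after $n/2$ rounds she holds exactly one endpoint of every edge of $M$, which is a dominating set because $M$ is perfect. Hence $\gamma_{WC}(T)\le n/2$, and therefore $s_{WC}(T)\le n/2$. It thus remains to prove two things: if $T$ has a perfect matching then $s_{WC}(T)\ge n/2$, i.e.\ Client can force $D$ to contain no dominating set of size $<n/2$; and if $T$ has no perfect matching then Dominator has no winning strategy, i.e.\ Client can force $D$ not to be a dominating set of $T$.

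I would prove both statements simultaneously by induction on $n$, via leaf-pruning, splitting on whether $T$ has a perfect matching. If it does, fix a leaf $\ell$ with neighbour $p$; then $\ell p$ lies in every perfect matching of $T$, so every component of the forest $T':=T-\ell-p$ again has a perfect matching. Client treats $\{\ell,p\}$ as a private two-element pairing --- resolving it whenever $\ell$ or $p$ is offered, with priority on keeping $p$ --- while on the rest of the board he runs the inductive Client strategy on the components of $T'$. Then $p\in C$, so: if $\ell\in C$ the vertex $\ell$ is undominated by every subset of $D$ and we are done; and if $\ell\in D$ then $\ell$ is its own private neighbour, so every dominating subset of $D$ must contain $\ell$, and deleting $\ell$ from it gives a dominating subset of $D\cap T'$, which by the induction hypothesis has size at least $|V(T')|/2=n/2-1$; either way $D$ has no dominating set of size $<n/2$. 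If $T$ has no perfect matching, I would use the characterisation that a tree has a perfect matching if and only if $T-v$ has exactly one component of odd order for every vertex $v$; hence there is a vertex $v_0$ such that $T-v_0$ has at least two components $C_1,C_2$ of odd order (when $n$ is odd such a $v_0$ is read off from the second vertex of a longest path). Client claims $v_0$ as soon as it is offered; then $v_0\in C$ and every vertex of $C_1$ has all of its neighbours inside $C_1\cup\{v_0\}$, so $D$ dominates all of $C_1$ only if $D\cap C_1$ dominates the tree $T[C_1]$. Since $|C_1|$ is odd, $T[C_1]$ has no perfect matching, so by induction Client can prevent $D\cap C_1$ from dominating $T[C_1]$, running the corresponding strategy inside $C_1$ in parallel with the $v_0$-priority and switching to the analogous strategy inside $C_2$ in the one situation where $v_0$ and the relevant vertex of $C_1$ are offered in the same round.

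The main obstacle is the inductive bookkeeping, and in both cases it has the same source: Client must run a strategy on a proper sub-board while Waiter is free to pair vertices of that sub-board with vertices outside it. One therefore has to play the sub-board strategy ``projected'' onto the rounds meeting the sub-board, and argue that the at most one ``bonus'' vertex Waiter can push into the sub-board --- by pairing it with the priority vertex ($p$, respectively $v_0$) --- does not destroy the inductive conclusion; this is exactly where the small base cases, the special case $|C_1|=1$, and the option to fall back to $C_2$ enter. In the matching case there is the additional subtlety of minimality: one must check that reattaching $\{\ell,p\}$ cannot create a dominating subset of $D$ of size below $n/2$, the dangerous configuration being that Dominator uses $\ell$ merely to dominate $p$, which the $p$-priority is designed to neutralise. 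Once these points are handled, the matching case follows by combining the pairing upper bound with $s_{WC}(T)\ge n/2$, and ``in all other cases Dominator does not win'' is precisely the no-matching half of the induction. One could alternatively hope to read the no-matching half off the Maker--Breaker results for trees of Gledel et al.~\cite{gledel2019maker}, which the statement mirrors, but the inductive argument above is self-contained.
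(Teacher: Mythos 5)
Your approach is genuinely different from the paper's. The paper reduces $T$ to its \emph{residue} $R(T)$ by repeatedly deleting a pendant edge $vw$ with $d(v)=1$ \emph{and} $d(w)=2$ (Lemma~\ref{lem:residue1} and Corollary~\ref{cor:residue2}); because the deleted neighbour always has degree~$2$, the intermediate graph stays a tree, so there is never any disconnection to worry about, and one only has to examine the three possible shapes of $R(T)$ from Lemma~21 of~\cite{duchene2020maker}. You instead prune an arbitrary leaf $\ell$ together with its (possibly high-degree) neighbour $p$, and in the no-matching half delete a vertex $v_0$ with at least two odd components; in both cases the remaining board is a \emph{forest}, and your inductive hypothesis is stated only for trees.

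This is where the argument has a real gap, and it is larger than you estimate. You claim that Waiter can push ``at most one bonus vertex'' into the sub-board by pairing it with the priority vertex. That accounting is wrong once the sub-board is disconnected: if $T'$ has components $C_1,\dots,C_k$, Waiter may offer $(u,w)$ with $u\in C_i$, $w\in C_j$, and whichever vertex Client refuses becomes an uncontrolled ``bonus'' inside the corresponding component; there is no bound of one on how many such rounds Waiter plays, so running the per-component inductive strategy ``projected onto the rounds meeting the sub-board'' is not justified. In the matching half this is fatal as stated, because you need the lower bound $s_{WC}\ge |V(T')|/2$ simultaneously on \emph{every} component and cannot simply sacrifice the others. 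In the no-matching half the accounting is closer to correct (Client can always keep the $C_1$-vertex in $C_1$-vs-$C_j$ cross-offers, leaving only the $v_0$-vs-$C_1$ round as a possible Waiter bonus), but you still need to prove that the inductive Client strategy on $C_1$ survives one Waiter bonus plus an arbitrary number of Client bonuses, and that the ``fall back to $C_2$'' option works despite the bonuses already donated to Waiter inside $C_2$; none of this is argued. Two ways to repair the argument: in the matching half, note that in a tree with a perfect matching there is always a leaf whose neighbour has degree exactly $2$ (take a leaf at maximum depth), so you can restrict to that choice of $\ell,p$ and keep $T'$ connected — at which point your reduction coincides with the paper's Lemma~\ref{lem:residue1}; or prove a general ``donation monotonicity'' lemma for Waiter--Client games (if Client can force a down-closed property of Waiter's set on a board $X$, he can force the same for $D\cap X$ on any $X\cup Y$) and apply it component by component. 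Either fix is substantive and currently missing.

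A smaller point: the upper bound (pairing along a perfect matching) and the observation $s_{WC}\le\gamma_{WC}$ are fine, and the use of the tree-specific Tutte-type characterisation for locating $v_0$ is correct; only the lower-bound bookkeeping is problematic.
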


\begin{theorem}\label{thm:WC.cycle}
For every $n\geq 3$,
$$
\gamma_{WC}(C_n) = s_{WC}(C_n) = \left\lfloor \frac{n}{2} \right\rfloor .
$$
\end{theorem}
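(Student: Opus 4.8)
The plan is to prove the chain $\lfloor n/2\rfloor \le s_{WC}(C_n)\le \gamma_{WC}(C_n)\le \lfloor n/2\rfloor$. The middle inequality is immediate: in every round Dominator receives exactly one new vertex, so whenever her claimed set is dominating it has size equal to the current round index, which already witnesses $s_{WC}(C_n)\le\gamma_{WC}(C_n)$. Hence it suffices to prove the two outer bounds.

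For the upper bound $\gamma_{WC}(C_n)\le\lfloor n/2\rfloor$ I would reduce to Theorem~\ref{thm:WC.trees}. Write $V(C_n)=\{0,1,\dots,n-1\}$ with the cyclic adjacency. If $n$ is even, delete the edge $\{n-1,0\}$ to obtain the path $P_n$; since $P_n$ has a perfect matching, Theorem~\ref{thm:WC.trees} provides a Dominator strategy using only edges of $P_n$ (hence legal on $C_n$) that claims a dominating set of $P_n$ within $n/2$ rounds, and a set dominating $P_n$ dominates $C_n$ as well. If $n$ is odd, Dominator first offers the edge $\{0,1\}$; by the reflection symmetry of $C_n$ we may assume she is given vertex $1$, which dominates $0,1,2$. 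The vertices still to be dominated induce the path $3-4-\cdots-(n-1)$ on $n-3$ vertices, an even number, so this path has a perfect matching; ignoring vertex $2$, Dominator follows the Theorem~\ref{thm:WC.trees} strategy on it for a further $(n-3)/2$ rounds. Domination of that path implies domination of the corresponding vertices of $C_n$, vertex $2$ is left to Staller, and Dominator wins in $1+(n-3)/2=\lfloor n/2\rfloor$ rounds.

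For the lower bound $s_{WC}(C_n)\ge\lfloor n/2\rfloor$ I would construct a strategy for Staller. The key reformulation is that a set $W$ of size $\lfloor n/2\rfloor$ contains a dominating set of size less than $\lfloor n/2\rfloor$ exactly when $W$ is a non-minimal dominating set of $C_n$ (a smaller dominating subset can always be enlarged within $W$ to one of size $\lfloor n/2\rfloor-1$, i.e.\ to some $W\setminus\{v\}$). As Dominator's set grows by one each round and ends with exactly $\lfloor n/2\rfloor$ vertices, it is therefore enough for Staller to force Dominator's final set to be either not dominating or a \emph{minimal} dominating set. Recalling that $D$ dominates $C_n$ iff $V(C_n)\setminus D$ has no three consecutive vertices, and that such a $D$ of size $\lfloor n/2\rfloor$ fails to be minimal exactly when it contains three consecutive vertices or a run of two adjacent vertices bordered on one side by a single non-vertex, this becomes a combinatorial question about which run/gap pattern Staller can impose on Dominator's set. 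My approach would again be a reduction to the path game: Staller reserves one vertex $x_0$, taking it whenever Dominator offers it, so that to dominate $C_n$ Dominator is essentially forced to dominate a path on $n-1$ vertices (or, in the branch where she secures $x_0$ herself, on $n-3$ vertices); each such path has an even number of vertices, so Theorem~\ref{thm:WC.trees} bounds from below the size of any dominating set Dominator can claim there, and one deduces $s_{WC}(C_n)\ge\lfloor n/2\rfloor$. Together with the upper bound this gives equality throughout.

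The main obstacle lies in the lower bound, specifically in the one round where Dominator offers the reserved vertex $x_0$: Staller's forced pick of $x_0$ hands Dominator a vertex of the path without the usual veto, perturbing the path strategy of Theorem~\ref{thm:WC.trees}. I would need either to show that this strategy is robust to a single such perturbation, or to replace it by a self-contained invariant-based Staller strategy that keeps some triple of consecutive Dominator-free vertices alive through round $\lfloor n/2\rfloor-1$ and resolves the final round so as not to create a dominating subset of size $\lfloor n/2\rfloor-1$. Pinning this down amounts to a careful accounting of the gaps between consecutive Dominator-vertices around the cycle, in the spirit of the proof of Theorem~\ref{thm:WC.trees}, and is where the real work sits.
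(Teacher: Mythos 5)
Your upper bound $\gamma_{WC}(C_n)\le\lfloor n/2\rfloor$ is correct and is essentially the paper's argument: after one forced round Dominator confines herself to a spanning path (for even $n$) or a sub-path on an even number of vertices (for odd $n$), and invokes Theorem~\ref{thm:WC.trees}. The only cosmetic difference is that the paper uses the same first move $\{v_{n-1},v_n\}$ in both parities and then plays on the longest even-length sub-path, whereas you split cases; both work. The chain reduction $s_{WC}\le\gamma_{WC}$ and your reformulation of what Staller must achieve (force the size-$\lfloor n/2\rfloor$ final Dominator set to be non-dominating or a \emph{minimal} dominating set) are also sound.

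The gap is the lower bound, and you yourself flag it: the proof is not done. Your plan --- reserve a vertex $x_0$, have Staller always take it, and then appeal to the path game lower bound --- does not go through as stated, for exactly the reason you identify: in the round where $x_0$ is offered, Staller has no veto over the partner vertex, and Theorem~\ref{thm:WC.trees} gives no ``robustness to one free handout'' statement. Moreover, after removing $x_0$ the relevant path has $n-1$ vertices, which is \emph{odd} when $n$ is even, so Theorem~\ref{thm:WC.trees} only tells you Staller wins on that path; it does not directly yield the quantitative bound $\lfloor n/2\rfloor$ for the cycle, where Dominator is still allowed (indeed obliged) to touch the neighbours of $x_0$. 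The paper avoids all of this by a self-contained Staller strategy split on Dominator's \emph{first} move: if the first offered pair is non-adjacent, Staller keeps one of them and thereafter uses a three-rule response (on the vertices $v_n,v_2,v_{n-1}$ near the kept vertex) to leave one vertex permanently undominated, so Dominator loses outright; if the first offered pair is adjacent, Staller uses the pairing rule ``on an offer $\{v_{2i-1},v_{2i}\}$ keep $v_{2i}$, otherwise keep the smaller-indexed vertex,'' which forces Dominator either to play all $\lfloor n/2\rfloor$ canonical pairs (yielding exactly the minimal dominating set $\{v_1,v_3,\dots\}$ in $\lfloor n/2\rfloor$ rounds) or to lose, because at the first deviation Staller acquires two consecutive vertices $v_{2j-1},v_{2j}$ together with $v_{2j-2}$, leaving $v_{2j-1}$ undominated. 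This is the ``careful accounting of gaps'' you gestured at; to complete your proof you would need to supply such an invariant-based Staller strategy rather than reduce to the tree theorem.
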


Due to these results and due to the fact that Waiter-Client games often tend to be easier wins for Waiter than
for Maker in the corresponding Maker-Breaker games,
one could ask whether a relation such as
$\gamma_{WC}(G) \leq \gamma'_{MB}(G)$
can be proven for any graph $G$.
By using our transference lemma
together with suitable constructions of 
hypergraphs $\calH = (X,\calF)$ for Maker-Breaker and Waiter-Client games, we answer this question negatively as follows. 

\begin{theorem}\label{thm:construction.MB.and.WC}
For all integers $s,t\geq 7$ there is a graph $G$ such
that $\gamma'_{MB}(G)=s$ and $\gamma_{WC}(G)=t$.
\end{theorem}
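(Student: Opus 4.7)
By the transference lemma (Lemma~\ref{lemma:from.H.to.G}), it suffices to construct a hypergraph $\calH = (X, \calF)$ such that Maker wins the $(1{:}1)$ Maker-Breaker game on $\calH$, with Breaker moving first, in exactly $s$ rounds and no faster, and Waiter wins the $(1{:}1)$ Waiter-Client game on $\calH$ in exactly $t$ rounds and no faster. Applying Lemma~\ref{lemma:from.H.to.G} to such an $\calH$ then yields a graph $G$ with $\gamma'_{MB}(G)=s$ and $\gamma_{WC}(G)=t$.

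My plan is to take $\calH = \calH_A \sqcup \calH_B$ as a disjoint union of two hypergraphs, each tailored to one of the two games. I will choose $\calH_A$ so that Maker, as second player, wins its Maker-Breaker game in exactly $s$ rounds, while Waiter cannot win the Waiter-Client game on $\calH_A$ in fewer than $t+1$ rounds (Client might even win outright). Symmetrically, I will choose $\calH_B$ so that Waiter wins its Waiter-Client game in exactly $t$ rounds, while Maker cannot win the Maker-Breaker game on $\calH_B$, as second player, in fewer than $s+1$ rounds. On the disjoint union each player focuses on her favoured component, since playing in the other one cannot speed up a win: Maker's winning set must lie entirely in $\calH_A$ or entirely in $\calH_B$, and Waiter's winning set similarly, so the respective sub-game analyses force the required matching lower bounds and give $\gamma'_{MB}(\calH)=s$, $\gamma_{WC}(\calH)=t$.

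To build $\calH_A$ I would start from a pairing hypergraph on $2s$ vertices whose winning sets are transversals of $s$ pairs — this gives $\gamma'_{MB}=s$ via the standard mirroring strategy for Maker — and then restrict the winning family so that Client obtains a blocking strategy in the Waiter-Client game, for instance by imposing a parity condition on which side of each pair is chosen. The construction of $\calH_B$ is analogous: take a pairing hypergraph whose full family of transversals gives $\gamma_{WC}=t$ by Waiter's pair-offering strategy (adding ``junk'' pairs if needed so that the board is large enough), and overlay an extra Breaker-friendly pairing that blocks a second-moving Maker from completing any winning set quickly. In both cases it will be clean to describe the blocker as a pairing of the extra vertices that the disfavoured player cannot circumvent.

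The main obstacle is verifying the ``unfriendly'' parameter of each component: that Client can really block Waiter (or at least delay her past round $t$) on $\calH_A$ despite the winning-set family being rich enough for Maker's pairing, and symmetrically that Breaker can really delay a second-moving Maker past round $s$ on $\calH_B$ despite Waiter's fast win. These are essentially independent combinatorial questions about carefully pruned pairing hypergraphs, and I expect the hypothesis $s, t \geq 7$ to enter precisely in providing the slack needed to fit both the pairing spine and the appropriate blocking structure in each component. Once $\calH_A$ and $\calH_B$ are verified, applying Lemma~\ref{lemma:from.H.to.G} to $\calH_A \sqcup \calH_B$ produces the desired graph $G$.
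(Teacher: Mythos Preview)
Your high-level architecture --- reduce via Lemma~\ref{lemma:from.H.to.G} and take a disjoint union of two hypergraphs, one tailored to each game --- is exactly what the paper does. The gap is that you have not actually built $\calH_A$ and $\calH_B$, and your concrete suggestions do not work.

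For $\calH_A$ you propose starting from the pairing hypergraph on $s$ pairs (all transversals are winning), then pruning to a parity-restricted subfamily so that Client can block Waiter. But pruning destroys Maker's win: when Maker mirrors as second player, it is Breaker who decides which side of each pair Maker receives, so Breaker can force Maker's transversal to have whichever parity is non-winning. More generally, any restriction of the transversal family that is thin enough for Client to block will also let Breaker steer the mirroring Maker away from it. The analogous problem arises for your $\calH_B$: if the winning sets are all transversals of $t$ pairs, then Maker as second player wins in $t$ rounds by mirroring, and it is unclear what ``overlaying an extra Breaker-friendly pairing'' means without simultaneously spoiling Waiter's offer-each-pair strategy.

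The genuine content of the theorem lies precisely in the step you flag as the ``main obstacle''. The paper splits into two cases. When $s\geq t$ it builds a hypergraph on which Waiter wins in exactly $t$ rounds while Maker loses outright (four disjoint pairs $\{a_i,b_i\}$ together with a bulk set, winning sets of the form $\{a_i,b_i\}\cup M$; Breaker uses the pairing on the $\{a_i,b_i\}$). The hard case is $s<t$, where one needs a hypergraph on which Maker (second) wins in exactly $s$ rounds but Waiter loses entirely. Such examples are not obvious from pairing ideas; the paper obtains one by taking the transversal hypergraph of a specific $15$-vertex example of Knox where Breaker wins Maker--Breaker but Client wins Client--Waiter, and then padding to reach size $s$. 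This is where the hypothesis $s,t\geq 7$ is actually used. Without an explicit construction of this type, your plan does not go through.
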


\smallskip

\textbf{Organization of the paper.}
In Section~\ref{sec:transference} we prove our central lemma which helps to transfer general results on positional games
into results on domination games.
Afterwards, in Section~\ref{sec:constructions},
we prove Theorem~\ref{thm:construction.MB.not.monotone}
and Theorem~\ref{thm:construction.MB.time.size},
and building on the transference lemma, we prove Theorem~\ref{thm:construction.Gledel.improved}, Theorem~\ref{thm:construction.large.before.small} and Theorem~\ref{thm:construction.large.faster.than.small}. 
In Section~\ref{sec:WC.domination} we then prove
all our results for the Waiter-Client setting:
Theorem~\ref{thm:WC.trees}, Theorem~\ref{thm:WC.cycle}
and Theorem~\ref{thm:construction.MB.and.WC}.

\medskip

\textbf{Notation.} We use standard notation.
Given a graph $G$, we write $V(G)$ for its vertex set and $v(G)=|V(G)|$, and we write $G[X]$ for the subgraph induced on a vertex set $X$. Given a vertex $v$, we let $N_G(v)$ be the neighourhood of $v$ in $G$ and set $d_G(v)=|N_G(v)|$ for the degree of $v$.
Moreover, given any subset $A\subset V(G)$, we set
$G-A=G[V(G)\setminus A]$ for short.


\section{From hypergraphs to domination games}\label{sec:transference}

In order to describe our transference lemma,
we make use of the following definitions.

\begin{definition}\label{def:short.notation}
Let $\mathcal{H}$ be a hypergraph, 
let $G$ be a graph,
let $a,b,s,t\in\mathbb{N}$. Then we write
\begin{itemize}
\item $\mathcal{H} \in M_i(a:b,s,t)$ if
Maker has a strategy to claim a winning set
of size at most $s$ within at most $t$ rounds
in the $(a:b)$ Maker-Breaker game on $\mathcal{H}$,
when Maker is the $i^{\text{th}}$ player.
\item $\mathcal{H} \in W(s,t)$ if
Waiter has a strategy to claim a winning set
of size at most $s$ within at most $t$ rounds
in the unbiased Waiter-Client game on $\mathcal{H}$.
\item $G \in D_{MB,i}(a:b,s,t)$ if
Dominator has a strategy to occupy a dominating set
of size at most $s$ within at most $t$ rounds
in the $(a:b)$ Maker-Breaker domination game on $G$,
when Dominator is the $i^{\text{th}}$ player.
\item $G \in D_{WC}(s,t)$ if
Dominator has a strategy to occupy a dominating set
of size at most $s$ within at most $t$ rounds
in the unbiased Waiter-Client domination game on $G$.
\end{itemize}

\end{definition}

\begin{definition}\label{def:transference.construction}
Let $\calH = (X,\calF)$ be a hypergraph,
let $a\geq 1$ be an integer.
Let $\calV(\calH) \subset \mathcal{P}(X)$ be the set of all vertex covers of $\calH$, and for each $A\in \calV(\calH)$, 
let $X_A$ be a vertex set of size $4a(|X| + |\calV(\calH)|)$,
such that all the sets $X_A$ and $X$ are pairwise disjoint.
We construct a graph $G=G(\calH,a)$
as follows:
\begin{itemize}
\item its vertex set is 
$V(G)=X\cup (\bigcup_{A\in \calV(\calH)} X_A)$, and
\item its edge set is $E(G)=\{vw:~ (\exists A\in \calV(\calH):~v\in A ~ \land ~ w\in X_A)\} \cup \binom{X}{2}$.
\end{itemize}
\end{definition}

\begin{lemma}[Transference Lemma]\label{lemma:from.H.to.G}
Let $\calH =(X,\mathcal{F})$ be a hypergraph, 
let $a,b,s,t\in\mathbb{N}$, and let $G=G(\mathcal{H},a)$ be constructed according to Definition~\ref{def:transference.construction}. 
Then the following holds:
\begin{enumerate}
\item[(M)] $\mathcal{H} \in M_i(a:b,s,t) ~ \Leftrightarrow ~ G \in D_{MB,i}(a:b,s,t)$,
\item[(W)] $\mathcal{H} \in W(s,t) ~ \Leftrightarrow ~ G \in D_{WC}(s,t)$.
\end{enumerate}
\end{lemma}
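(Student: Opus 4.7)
The plan hinges on a single structural fact about the graph $G = G(\calH,a)$: for every $D \subseteq V(G)$ with $|D| \leq s$, $D$ is a dominating set of $G$ if and only if $D \cap X$ contains a hyperedge of $\calF$. To prove this, I assume without loss of generality that $s \leq |X|$, since winning sets in $\calH$ have size at most $|X|$. Then the gadget size $|X_A| = 4a(|X| + |\calV(\calH)|)$ strictly exceeds $|X| \geq s$, so no such $D$ can contain an entire $X_A$. Dominating each pendant cloud $X_A$ therefore forces $D \cap A \neq \emptyset$ for every vertex cover $A \in \calV(\calH)$. A standard complementation argument shows this is equivalent to $D \cap X$ containing a hyperedge: if $D \cap X$ contained no hyperedge, then $X \setminus (D \cap X)$ would itself be a vertex cover, contradicting $D \cap A \neq \emptyset$ for every $A$. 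Conversely, if $D \cap X$ contains a hyperedge then $D$ meets every vertex cover, and since $X$ is a clique, any nonempty $D \cap X$ also dominates $X$.

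With this equivalence in hand, each of the four implications becomes a strategy-transfer argument. For the $(\Rightarrow)$ directions of both (M) and (W), Dominator plays only on $X$, mimicking Maker's or Waiter's strategy on $\calH$ and interpreting Staller's $X$-moves as Breaker's or Client's real-game responses (in WC she always offers pairs entirely from $X$). After $\leq t$ rounds her $X$-claims contain a hyperedge $F$ of size $\leq s$, which by the equivalence is a dominating set of $G$ of size $\leq s$. For the $(\Leftarrow)$ direction of (M), Maker simulates a virtual $G$-game in which virtual Dominator plays her winning strategy $\pi$ and virtual Staller plays exactly Breaker's real moves (all lying inside $X$); then virtual Dominator occupies some dominating set $D^*$ with $|D^*| \leq s$, and the equivalence applied to $D^*$ yields a hyperedge $F \subseteq D^* \cap X$, which is contained in Maker's real claims.

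The main obstacle is the $(\Leftarrow)$ direction of (W), since Dominator in $G$ may offer pairs containing $X_A$-vertices whereas real Waiter must offer pairs from $X$ only. The plan is to run a virtual WC-game on $G$ with Dominator following her winning strategy $\pi$, classifying each virtual round by its offered pair $(u,v)$ as: both in $X$ (Case 1), exactly one in $X$ (Case 2), or neither in $X$ (Case 3). Only Case 1 rounds are enacted in the real $\calH$-game, with real Waiter offering $(u,v)$ and virtual Client mirroring real Client's pick. In Cases 2 and 3, virtual Client is chosen adversarially, always taking the $X$-vertex when one is on offer in Case 2, so that virtual Dominator never gains an $X$-vertex outside Case 1 rounds. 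Since $\pi$ wins against every virtual Client play within $\leq t$ rounds, it produces $D^*$ with $|D^*| \leq s$; by construction virtual Dominator's $X$-claims are exactly real Waiter's claims, and the equivalence then yields a hyperedge $F$ of size $\leq |D^* \cap X| \leq s$ in Waiter's real claims, achieved within a number of real rounds equal to the Case 1 count, which is at most the virtual round count $\leq t$.

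The only routine check is internal consistency of the simulation: whenever $\pi$ offers a Case 1 pair, both vertices must be unclaimed in the real $\calH$-game. This holds because the set of $X$-vertices claimed in the virtual game is always a superset of the set claimed in the real game, gaining exactly one extra $X$-vertex per Case 2 round, so anything unclaimed in virtual is unclaimed in real. With this verified, all four implications of the Transference Lemma follow.
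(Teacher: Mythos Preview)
Your approach to the $(\Leftarrow)$ directions is genuinely different from the paper's. The paper argues by contraposition and equips Staller with an explicit two-stage strategy on $G$: Stage~I mimics Breaker's (resp.\ Client's) strategy on $X$, while Stage~II (resp.\ the specific Case~III rule in the Waiter--Client version) has Staller secure at least one vertex in every pendant set $X_A$. That second stage is what guarantees, for \emph{every} value of $s$, that any dominating set $D$ Dominator might hold misses a vertex of each $X_A$, so $D\cap X$ must meet every vertex cover and hence contain a hyperedge. Your direct simulation replaces this by a purely structural fact---a dominating set of size at most $s$ cannot contain a whole $X_A$---which is tidier but only valid when $s<|X_A|$.

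This is where a gap appears. Your ``WLOG $s\le|X|$'' is justified on the $\calH$ side (winning sets live in $X$), but you give no argument that $G\in D_{MB,i}(a{:}b,s,t)$ is equivalent to $G\in D_{MB,i}(a{:}b,|X|,t)$ when $s>|X|$, and indeed $G$ does admit dominating sets $D$ with $D\cap X$ containing no hyperedge once $|D|\ge|X_A|$. So for large $s$ your virtual Dominator could in principle return such a $D^*$, and the extraction of a hyperedge from $D^*\cap X$ fails. The paper's Stage~II sidesteps this because Staller's occupied vertex in each $X_A$ blocks $X_A\subseteq D$ regardless of how large $s$ is. Your proof is complete for $s\le|X|$---which suffices for every application of the lemma in the paper---but the lemma is stated for all $s\in\mathbb N$, and your reduction does not close that case. (A natural patch, in the spirit of the paper, would be to have your virtual Client in Case~3 play to occupy one vertex of each $X_A$ rather than arbitrarily; the analogous fix for (M) is less straightforward.)
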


Note that in (W) we stick only to the unbiased game as this is all we need for our later results; but a biased version would also be possible.

Before we prove Lemma~\ref{lemma:from.H.to.G} we first collect
useful claims. From now on, let $\calH$ and $G=G(\calH,a)$
be fixed according to the assumptions of
Lemma~\ref{lemma:from.H.to.G}.

\begin{claim}\label{clm:transference.claim1}
Each $F\in \calF$ is a dominating set of $G$.
\end{claim}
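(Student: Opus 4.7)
The plan is to verify that a winning set $F \in \calF$ dominates every vertex of $G$ by splitting into cases according to which part of $V(G) = X \cup \bigcup_{A \in \calV(\calH)} X_A$ the vertex to be dominated lies in.

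First I would handle the vertices of $X$. By construction of $G$ we have $\binom{X}{2} \subseteq E(G)$, so $X$ induces a clique. Thus any vertex $x \in X$ is either itself in $F$ (and dominates itself) or lies in $X \setminus F$, in which case it is adjacent to every vertex of $F \subseteq X$, and in particular to at least one (since $F$ is non-empty as a hyperedge).

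Next I would handle a vertex $w \in X_A$ for some vertex cover $A \in \calV(\calH)$ of $\calH$. By definition of a vertex cover, $A \cap F \neq \emptyset$, so pick some $v \in A \cap F$. By the edge-set definition, $vw \in E(G)$ because $v \in A$ and $w \in X_A$. Hence $w$ is dominated by $v \in F$.

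There is no real obstacle here; the claim is essentially immediate from the construction, once one observes that the role of the gadgets $X_A$ is precisely to be dominated by $A$, and that any hyperedge intersects every vertex cover. I would keep the proof to two or three lines, covering the two cases above.
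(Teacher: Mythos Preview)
Your proposal is correct and follows essentially the same approach as the paper: split into the cases $v\in X$ (handled via the clique $G[X]$) and $v\in X_A$ (handled via $A\cap F\neq\varnothing$ since $A$ is a vertex cover). Your explicit remark that $F$ is non-empty is a small additional care the paper leaves implicit.
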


\begin{proof}
Let $v\in V(G)\setminus F$.
If $v\in X$, then it is dominated by $F$ since $G[X]$ is a complete graph.
If $v\notin X$, then there is a set $A\in \calV(\calH)$
such that $v\in X_A$, and therefore $A=N_G(v)$. Since $A$ is a vertex-cover of $\mathcal{H}$, it holds that
$A\cap F\neq \varnothing$, i.e. $F$ contains a neighbour of $v$.
\end{proof}

\begin{claim}\label{clm:transference.claim2}
Let $k:=\min\{|F|:~F\in\mathcal{F}\}$.
The domination number of $G$ is $\gamma(G)=k$.
\end{claim}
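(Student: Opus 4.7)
The plan is to prove both inequalities $\gamma(G) \leq k$ and $\gamma(G) \geq k$ separately. The upper bound is immediate from Claim~\ref{clm:transference.claim1}: picking any $F \in \calF$ realizing $|F|=k$ yields a dominating set of $G$ of size exactly $k$, so $\gamma(G) \leq k$.

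For the lower bound I would take an arbitrary dominating set $D \subseteq V(G)$ and split into two cases according to whether $D\cap X$ contains a winning set of $\calH$. In the first case we have $|D| \geq |D\cap X| \geq k$ and are done. In the second case, the assumption says that for every $F \in \calF$ there exists some $v \in F$ with $v \notin D$, which is precisely the statement that the set $A := X\setminus D$ meets every hyperedge of $\calH$. Hence $A$ is a vertex cover, i.e.\ $A\in\calV(\calH)$, so by Definition~\ref{def:transference.construction} the vertex set $X_A$ is one of the gadgets present in $G$. By construction every $w\in X_A$ satisfies $N_G(w)=A$, and by the very definition of $A$ we have $A\cap D=\varnothing$. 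Thus no vertex of $X_A$ has any neighbour in $D$, so domination forces $X_A\subseteq D$. Since $|X_A|=4a(|X|+|\calV(\calH)|)\geq |X|\geq k$ (the last inequality because any $F\in\calF$ is a subset of $X$), this again yields $|D|\geq k$.

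The step I expect to require the most thought is recognising the hypergraph duality that makes the whole construction tick: the statement "$D\cap X$ contains no winning set" is precisely equivalent to "$X\setminus D$ is a vertex cover of $\calH$", which is what singles out a specific $A\in\calV(\calH)$ whose attached gadget $X_A$ must then lie wholly in $D$. Once this dichotomy is phrased correctly, the deliberately generous size of the gadgets $X_A$ does the rest: whenever Dominator declines to claim a full hyperedge inside $X$, she is forced to swallow an entire $X_A$, whose size was chosen specifically to exceed $k$ by a large margin. Everything else reduces to routine bookkeeping.
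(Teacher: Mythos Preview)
Your proposal is correct and follows essentially the same approach as the paper: the paper argues by contradiction (assuming $|D|<k$ forces $A=X\setminus D\in\calV(\calH)$ and hence $X_A\subseteq D$, contradicting $|D|<k$), while you phrase the same dichotomy as a direct case split, but the core idea---that failing to contain a hyperedge in $X$ makes $X\setminus D$ a vertex cover and forces the large gadget $X_A$ into $D$---is identical.
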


\begin{proof}
By Claim~\ref{clm:transference.claim1} we have $\gamma(G)\leq k$. Now, for contradiction, assume that there is dominating set $D$ of size smaller than $k$. Then $A:=X\setminus D$ is a vertex-cover of $\calH$, since every hyperedge $F\in \calF$ satisfies $|F|>|D|$. 
But then no vertex in $X_A$ is dominated by $X\cap D$.
So, if $D$ is a dominating set, we then need
$X_A\subset D\setminus X$. But this implies
$|D|\geq |X_A| > 4|X| \geq 4k$, a contradiction.
\end{proof}

\begin{claim}\label{clm:transference.claim3}
If $D\subset X$ is a dominating set in $G$, then there is some $F\in\mathcal{F}$ such that $F\subset D$.
\end{claim}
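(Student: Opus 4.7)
The plan is to argue by contradiction: suppose $D\subset X$ is a dominating set of $G$ but no hyperedge $F\in\mathcal{F}$ satisfies $F\subset D$, and then exhibit a vertex of $G$ that $D$ fails to dominate. This mirrors the dualization idea already used in the proof of Claim~\ref{clm:transference.claim2}, where the complement of a potential dominator in $X$ was interpreted as a vertex cover.

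Concretely, I would set $A:=X\setminus D$. The assumption that no hyperedge sits inside $D$ means that every $F\in\mathcal{F}$ meets $A$, so $A$ is a vertex cover of $\mathcal{H}$, i.e.\ $A\in\mathcal{V}(\mathcal{H})$. By Definition~\ref{def:transference.construction}, the gadget vertex set $X_A$ is therefore present in $V(G)$, and by construction the neighbourhood in $G$ of any $v\in X_A$ is exactly $A$ (the sets $X_{A'}$ are pairwise disjoint and no edges are added between $X_A$ and $X\setminus A$ or between different gadgets).

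Now I pick any $v\in X_A$ (this is possible since $|X_A|=4a(|X|+|\mathcal{V}(\mathcal{H})|)\geq 1$). Since $D\subset X$ and $v\notin X$, we have $v\notin D$. Moreover $N_G(v)=A=X\setminus D$ is disjoint from $D$, so $v$ has no neighbour in $D$ either. Hence $v$ is neither in $D$ nor dominated by $D$, contradicting the assumption that $D$ is a dominating set. Therefore some $F\in\mathcal{F}$ must be contained in $D$.

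There is essentially no obstacle here once the correct dualization is spotted; the proof is very short and relies only on the defining property of vertex covers together with the fact that the gadget $X_A$ is attached to the graph precisely at $A$. The only small care one needs is that $D\subset X$ really forces the gadget vertex $v$ to lie outside $D$, which is immediate from the disjointness of $X$ and the $X_A$'s built into the construction.
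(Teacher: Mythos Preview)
Your proof is correct and follows exactly the same approach as the paper: set $A:=X\setminus D$, observe that $A$ is a vertex cover, and conclude that no vertex of $X_A$ is dominated by $D$. The paper's write-up is slightly terser, but the argument is identical.
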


\begin{proof}
Let $D\subset X$ be a dominating set in $G$, and assume there is no $F\in\mathcal{F}$ such that $F\subset D$. Then each $F\in \calF$ intersects $A:=X\setminus D$
and hence $A$ is a vertex-cover of $\calH$. But then, no vertex in $X_A$ has a neighbour in $D$, a contradiction.
\end{proof}

With the above claims in hand, we can prove
Lemma~\ref{lemma:from.H.to.G}.

\smallskip

\begin{proof}[Proof of Lemma~\ref{lemma:from.H.to.G}]

\textbf{(M)} 
"$\Rightarrow$" 
Assume that $\mathcal{H} \in M_i(a:b,s,t)$, i.e.~Maker (being the $i^{\text{th}}$ player) has a strategy $\mathcal{S}_M$ to claim a winning set of size at most $s$ within at most $t$ rounds in the $(a:b)$ Maker-Breaker game on $\mathcal{H}$.
Then in the $(a:b)$ Maker-Breaker domination game on $G$, Dominator (being the $i^{\text{th}}$ player) can restrict her moves to $G[X]$ and play according to the strategy $\mathcal{S}_M$. 
This way, Dominator claims a set $F\in \mathcal{F}$ of size at most $s$ within at most $t$ rounds, and thus by Claim~\ref{clm:transference.claim1}, we have $G \in D_{MB,i}(a:b,s,t)$.

"$\Leftarrow$" We do a proof by contraposition. Assume that 
$\mathcal{H} \notin M_i(a:b,s,t)$, and therefore
Breaker has a strategy $\mathcal{S}_B$ 
in the $(a:b)$ Maker-Breaker game on $\calH$ 
(where Maker is the $i^{\text{th}}$ player)
which prevents Maker from claiming a winning set of size
at most $s$ within the first $t$ rounds. Then, in the $(a:b)$ Maker-Breaker domination game on $G$
(where Dominator is the $i^{\text{th}}$ player),
Staller can apply the following strategy:
\begin{itemize}
\item[] \textbf{Stage I:} As long as before Staller's move there is at least one free vertex in $X$,
Staller plays on $G[X]$ only, and he plays according to Breaker's 
strategy $\mathcal{S}_B$. (If Dominator claims an element
outside of $X$, Staller simply pretends that Dominator made an arbitrary move within $X$.)
\item[] \textbf{Stage II:} When Staller enters Stage II, all elements of $X$ are claimed. For each $A\in \calV(\calH)$,
let $X_A'\subset X_A$ be the vertices which are still free at the end of Stage I. Then from now on, for each vertex that Staller can claim,
Staller chooses a vertex from an arbitrary set $X_A'$ in which he still has no vertex claimed.
\end{itemize}

Note that Stage I lasts at most $|X|$
rounds, and in the meantime Dominator cannot claim more than $a|X|$ vertices. Hence, for 
Stage II of Staller's strategy we have $|X_A'|\geq 4a|\calV(\calH)|$ for every $A\in \calV(\calH)$.
Stage II lasts at most $|\calV(\calH)|$ rounds,
since Staller only wants to claim one vertex in each of the sets $X_A'$. Hence, during this whole stage Dominator does not claim more than $a|\calV(\calH)|$ vertices, which implies that at any point in Stage II, there are at least $2a|\calV(\calH)|$
free vertices in each set $X_A'$ and thus
Staller can always claim a free vertex as required by Stage II of the strategy.

It remains to prove that, by following the strategy, Staller 
prevents Dominator from getting a dominating set of $G$ 
of size at most $s$ within
the first $t$ rounds.
For contradiction, assume the opposite, i.e. 
Dominator claims a dominating set $D$ with $|D|\leq s$ within at most $t$ rounds.
Then $D' := D\cap X$ would be a dominating set by the following reason:
Because of Stage II, for each $A\in \mathcal{V}(\calH)$,
there is vertex $v\in X_A$ such that $v\notin D$.
Hence, since $N_G(v)=A\subset X$, 
we get $D\cap A \neq \varnothing$
and therefore $D'\cap A \neq \varnothing$. 
Thus, $D'$ dominates every vertex in $X_A$ for every $A\in \mathcal{V}(\calH)$,
and it also dominates every vertex in $X\setminus D' = X\setminus D$
since $G[X]$ is a complete graph. \\
By Claim~\ref{clm:transference.claim3}
it follows that there is some $F\in \calF$ with 
$F\subset D'\subset D$,
and note that $|F|\leq |D|\leq s$ by assumption. 
However, by following Stage~I, Staller ensures that Dominator needs to play at least $t$ rounds
on $X$ to occupy a winning set $F\in \calF$
of size at most $s$, a contradiction.

\medskip

\textbf{(W)} "$\Rightarrow$" The proof is essentially the same as for (M).

"$\Leftarrow$" The proof is very similar to the corresponding proof for (M).
Assume that Client has a strategy $\mathcal{S}_C$ in the 
Waiter-Client game on $\calH$ which prevents Waiter from 
getting a winning set of size at most $s$ 
within the first $t$ rounds. 
Now, in the Waiter-Client domination game on $G$,
Staller can apply the following strategy which distinguishes
three cases.
\begin{itemize}
\item[] \textbf{Case I:} When Dominator offers two elements of $X$,
then Staller distributes these vertices according to the strategy $\mathcal{S}_C$.
\item[] \textbf{Case II:} When Dominator offers a vertex $v\in X$ and a vertex $w \notin X$, then Staller claims $v$ and gives $w$ to 
Dominator. (For the strategy $\mathcal{S}_C$ in Case I, Client imagines that $v$ was not played yet.)
\item[] \textbf{Case III:} When Dominator offers two vertices 
$v\in X_{A_1}$ and $w\in X_{A_2}$ with $A_1,A_2\in \calV(\calH)$,
then Staller claims $v$ if he does not have a vertex in $X_{A_1}$ yet, and otherwise Staller claims $w$. The remaining vertex goes to Dominator.
\end{itemize}

By the strategy $\mathcal{S}_C$ in Case I,
and since Dominator never gets an element of $X$ in the other cases,
Dominator needs more than $t$ rounds in order to claim
a winning set of $\mathcal{F}$ with size at most $s$.
Moreover, by the strategy from Cases~II and~III,
Staller makes sure to receive at least one vertex in each set $X_A$.
For this note that for each $A\in \calV(\calH)$ there can be at 
most $|X|$ rounds of Case II and at
most $|\calV(\calH)|$ rounds of Case III, in which a vertex $x\in X_A$ is offered while Staller still does not have a vertex in $X_A$, but Staller decides to claim the other vertex which was offered by Dominator.

Now, using these observation, the same discussion as at the end of the proof of (M) gives that
Staller prevents Dominator from occupying a dominating set of $G$ of size at most $s$ within the first $t$ rounds.
\end{proof}


\section{Constructions}\label{sec:constructions}

\subsection{A Maker-Breaker like game on directed multigraphs}

As a first building block towards our constructions for Theorems~\ref{thm:construction.MB.time.size} and ~\ref{thm:construction.Gledel.improved}--\ref{thm:construction.large.faster.than.small}, we define certain directed multigraphs $\vec{G}_t(b)$ and
$\vec{H}_t(b)$, and also analyze a suitably defined positional game on these directed multigraphs. We start with the definition of $\vec{G}_t(b)$.

\begin{definition}\label{def:directed.G}
Let positive integers $b,t$ be given. We iteratively define a directed multigraph $\vec{G}_t(b)$ with startpoint $v_0$ and endpoint $v_t$ as follows:
\begin{enumerate}
\item Let $\vec{G}_1(b)$ consist of two vertices $v_0$ and $v_1$,
and one directed edge going from $v_0$ to $v_1$.
\item Given $\vec{G}_i(b)$ for some $1\leq i\leq t-1$, with endpoints $v_0$ and $v_i$, we obtain $\vec{G}_{i+1}(b)$ as follows:
\begin{enumerate}
\item add a new vertex $v_{i+1}$,
\item add $b$ copies of $\vec{G}_i(b)$, each starting in $v_i$ and ending in $v_{i+1}$, such that these copies intersect only in $v_i$ and $v_{i+1}$ and they intersect the initial graph $\vec{G}_i(b)$ only in $v_i$.
\end{enumerate}
\end{enumerate} 
\end{definition}

For an example, see Figure~\ref{fig:constructions}.
Note that by a simple induction on $t$ it can be seen that all vertices of $\vec{G}_t(b)$ except from $v_0$ and $v_t$ have outdegree $b$.

\begin{figure}[ht]
    \centering
\includegraphics[scale=0.7,page=1]{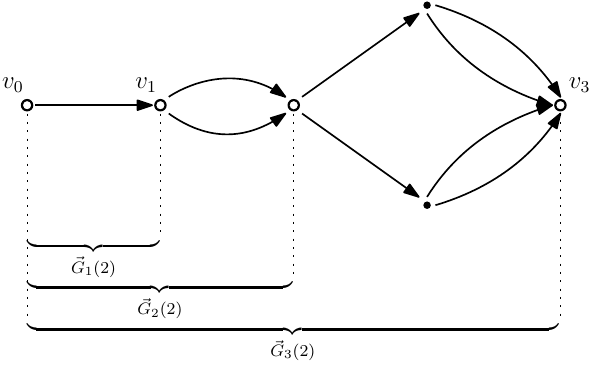}
    \caption{$\vec{G}_t(2)$ for $t\in\{1,2,3\}$}
    \label{fig:constructions}
\end{figure}

\begin{definition}\label{def:directed.H}
Let positive integers $b,t$ with $t\geq 3$ be given. 
We construct $\vec{H}_t(b)$ as follows:
We take $b+2$ vertices $x_0,\dots,x_{b+1}$. 
For every $i \in [b+1]$, we add $b+1$ different copies of 
$\vec{G}_{t-2}(b)$ with startpoint $x_0$ and endpoint $x_i$, such that these copies intersect only in $x_{0}$ and $x_i$ and use new vertices otherwise.
\end{definition}

We next define an auxiliary game on 
directed multigraphs and analyze how fast Maker can win this game
on $\vec{G}_t(b)$ or $\vec{H}_t(b)$. 

\begin{definition}[Auxiliary game $\mathcal{G}(b,\vec{F},S)$]
\label{def:auxiliary.directed.game}
Let a directed multigraph $\vec{F}$ and a subset
$S \subseteq V(\vec{F})$ be given.
The game $\mathcal{G}(b,\vec{F},S)$ is a $(1:b)$ Maker-Breaker game  played on the vertices and the directed edges of $\vec{F}$,
where Maker is the first player, with the following adjustments:
\begin{itemize}
\item[(a)] Maker can only claim a directed edge 
$(v,w)$ of $\vec{F}$ if she already claimed both vertices $v,w$.
\item[(b)] At the beginning of the game, the vertices in $S$ already belong to Maker.
\end{itemize}
Maker wins the game $\mathcal{G}(b,\vec{F},S)$ if she manages to claim a directed edge; Breaker wins otherwise.
\end{definition}

We note that for the description of the game above we could actually replace
directed edges with simple edges. However, keeping these directions allows us later to define a partial ordering on the vertex set for the description and analysis of suitable strategies.
We start with the following lemma.

\begin{lemma} \label{lem:game.directed.G}
Let positive integers $b,t$ be given.
Let $v_0$ and $v_t$ be the start- and endpoint of $\vec{G}_t(b)$. 
Then the following holds:
\begin{enumerate}
\item Maker has a strategy to win
$\mathcal{G}(b,\vec{G}_t(b),\{v_0,v_t\})$ within $t$ rounds.
\item Breaker has a strategy to win
$\mathcal{G}(b,\vec{G}_t(b),\{v\})$ for any vertex $v$.
\item Breaker has a strategy to prevent Maker from winning
$\mathcal{G}(b,\vec{G}_t(b),\{v_0,v_t\})$ within $t-1$ rounds.
\end{enumerate}
\end{lemma}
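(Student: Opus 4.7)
The plan is to prove all three statements by induction on $t$, leveraging the recursive decomposition of $\vec{G}_{t+1}(b)$ into $b+1$ internally disjoint copies of $\vec{G}_t(b)$: the base copy $G_0$ from $v_0$ to $v_t$, and the $b$ newly attached copies $G_1,\ldots,G_b$ from $v_t$ to $v_{t+1}$, sharing only the pivot vertex $v_t$ (and, for the attached copies, also the outer endpoint $v_{t+1}$).

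For (1), the base case $t=1$ is immediate since Maker already owns both endpoints of the single edge and claims it in round~$1$. For the inductive step, Maker's first move is to claim the pivot $v_t$; now every copy $G_i$ has both of its endpoints in Maker's hand. Since Breaker has only $b$ moves in this round while $b+1$ internally disjoint copies are available, by pigeonhole some copy $G^\star$ is untouched by Breaker. Maker then plays the inductive winning strategy inside $G^\star$, treating any Breaker move outside $G^\star$ as a wasted move (pretending Breaker played arbitrarily inside $G^\star$). By induction she wins $G^\star$ within $t$ additional rounds, for a total of $t+1$.

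For (3), after verifying the trivial base case, I would use the same decomposition: in $\vec{G}_{t+1}(b)$ with $S=\{v_0,v_{t+1}\}$, Breaker responds locally, placing all $b$ of his moves inside whichever $G_i$ Maker just played in. He uses the Part~(2) strategy for that $G_i$ while Maker has not yet claimed $v_t$, and switches to the Part~(3) inductive strategy afterwards. Maker's winning edge must lie in some $G_i$ with both endpoints in her hand; since the only vertex of $S$ inside $G_i$ is $v_0$ (for $i=0$) or $v_{t+1}$ (for $i\ge 1$), Maker must claim $v_t$ at some round~$r^\star$. Part~(2) forbids a win before $r^\star$, and after $r^\star$ the inductive hypothesis forces at least $t$ further Maker moves in the chosen $G_i$. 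Combined with the round spent claiming $v_t$, this yields $\ge t+1$ Maker rounds.

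The main obstacle is (2): producing an explicit Breaker strategy from scratch rather than via decomposition, since Maker's single starting vertex $v$ may sit anywhere in $\vec{G}_t(b)$. I plan to induct on $t$ and maintain the invariant that Maker's claimed vertex set remains independent in the \emph{live} graph (the subgraph of edges not yet claimed by Breaker). The structural fact that at most $b$ edges span any two specific vertices of $\vec{G}_t(b)$ (provable by a side induction on $t$) ensures that any single new adjacency can be killed within Breaker's bias of $b$. The delicate case, which I expect to be the crux, is when a newly claimed Maker vertex simultaneously creates several new adjacencies to previous Maker vertices; here I expect to combine preemptive spending of Breaker's bias in earlier rounds with the observation that such configurations already confine Maker's vertices to a common small sub-copy of $\vec{G}_{t-1}(b)$, reducing the analysis to the inductive hypothesis.
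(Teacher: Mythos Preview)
Your approach to (1) is correct and matches the paper.

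For (2), however, your invariant---that Maker's claimed vertices remain independent in the live graph---is too weak to maintain. Consider $\vec G_2(b)$ with $S=\{v_2\}$: Maker first claims $v_0$ (which is non-adjacent to $v_2$, so your invariant is vacuously preserved and gives Breaker no guidance in that round), and then claims $v_1$. At that point there are $b+1$ live edges among Maker's vertices (the single edge $(v_0,v_1)$ and the $b$ parallel edges $(v_1,v_2)$), and Breaker's bias $b$ cannot kill them all. Your ``confine to a common sub-copy of $\vec G_{t-1}(b)$'' idea does not rescue this, since $v_0,v_1,v_2$ do not lie in any common copy of $\vec G_1(b)$, and ``preemptive spending'' is left too vague to specify what Breaker should have done in round~1. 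The paper's key insight is a strictly stronger invariant: after each Breaker move, \emph{at most one} Maker vertex has any free outgoing edges. Since every vertex has out-degree at most $b$, Breaker can always kill all outgoing edges of one chosen Maker vertex per round; the paper shows (via the partial order $\preccurlyeq$) how to choose which one so that this invariant, together with a companion condition, is preserved. This is the missing idea.

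Your plan for (3) also has a gap. You correctly argue (via (2)) that Maker must claim $v_t$ at some round $r^\star$, and you want Breaker to ``switch to the Part~(3) inductive strategy'' thereafter. But if Maker played inside some $G_i$ \emph{before} round $r^\star$, the position in $G_i$ at time $r^\star$ is not the fresh two-endpoint state that the inductive hypothesis for (3) requires: Maker already owns internal vertices, and Breaker's earlier Part-(2) responses in $G_i$ need not coincide with what the Part-(3) strategy would have prescribed. If instead Breaker tries to continue Part~(2) in every such $G_i$ after $r^\star$, then because $v_t$ is shared by all $b+1$ copies, responding to the single move ``claim $v_t$'' would require Breaker to act in several $G_i$'s at once, exceeding his bias. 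The paper sidesteps these difficulties by not decomposing at all: it gives a direct round-by-round argument with a quantitative strengthening of the invariant from (2), tracking the shortest directed-path distance between Maker's ``active'' vertex and her other vertices, and showing this distance can at worst halve each round, forcing at least $t$ rounds.
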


\begin{proof}
Throughout the proof, if $x,y$ are vertices in $\vec{G}_t(b)$, we write $x \preccurlyeq y$ if there exists a
directed path starting in $x$ and ending in $y$. Note that this defines a partial ordering on $V(\vec{G}_t(b))$.

\medskip

We first prove property (1) by induction on $t$.
For $t=1$, Maker already possesses $v_0$ and $v_1$ before the game starts, and hence she can win in the first round by claiming the directed edge $(v_0,v_1)$.
For $t>1$, Maker possesses $v_0$ and $v_t$ before the game starts. For her first move, she claims the vertex $v_{t-1}$.
As there are $b+1$ copies of $\vec{G}_{t-1}(b)$,
that have their start- and endpoints in $\{v_0,v_{t-1},v_t\}$
and which are pairwise disjoint outside of this set,
and since Breaker's bias is $b$,
after Breaker's first move there must be at least
one copy of $\vec{G}_{t-1}(b)$ with no Breaker vertices,
but with the start- and endpoint claimed by Maker. By induction
Maker has a strategy to win within the next $t-1$ rounds,
hence proving the statement (1).

\medskip

Next, consider statement (2).
We prove by induction on the number of rounds that in the game 
$\mathcal{G}(b,\vec{G}_t(b),\{v\})$, Breaker can play in such a way that after each of his moves the following properties hold:
\begin{enumerate}
\item[(P1)] There is at most one Maker's vertex $x$ with free outgoing edges. 
\item[(P2)] For all vertices $y\neq z$ with $y \preccurlyeq z$: If $y$ and $z$ are claimed by Maker, then Breaker has claimed all outgoing edges at $y$.
\end{enumerate}
Note that (P1) and (P2) are true before the game starts, as Maker possesses only the vertex $v$ at that point. For induction, assume that (P1) and (P2) hold after the $(i-1)$-st round for some $i\in \mathbb{N}$, and let $w$ be the vertex that Maker claims in round $i$. If all of Maker's vertices except from
$w$ have no free outgoing edges, Breaker simply claims all outgoing edges at $w$, and (P1) and (P2) follow immediately.
Hence, using (P1) from the induction hypothesis, we may assume from now on that there is a unique vertex $w'\neq w$
that was claimed by Maker before the $i$-th round, and which still has free outgoing edges.
If $w' \preccurlyeq w$,
then Breaker claims all outgoing edges at $w'$. Then (P1) is maintained, as $w$ is the only Maker's vertex that can have free outgoing edges. Also, due to this, if (P2) would fail to hold
at the end of the $i$-th round, then we would need $w \preccurlyeq z$ for some Maker's vertex $z$ that was claimed in an earlier round. But then also $w' \preccurlyeq z$ would need to hold by transitivity, while $w'$ had free outgoing edges at the end of the $(i-1)$-st round, in contradiction to (P2) at the end of the $(i-1)$-st round. Hence, (P2) is maintained.
If otherwise $w' \preccurlyeq w$ does not hold, then Breaker claims all outgoing edges at $w$. Then (P1) remains true, with $w'$ staying the unique Maker's vertex with free outgoing edges.
If (P2) would fail to hold at the end of the $i$-th round, 
then we would need $w' \preccurlyeq z$ for some Maker's vertex $z$. As (P2) was true at the end of the $(i-1)$-st round,
$z$ cannot be claimed until the $(i-1)$-st round, implying
$z=w$, which however is in contradiction to the assumption of this case. Thus, (P2) holds again, finishing the argument that Breaker can always maintain both properties.

Finally, note that (P2) implies that Maker can never claim a directed edge $(y,z)$, since when $y$ and $z$ are claimed by Maker, Breaker claims all outgoing edges at $y$.

\smallskip

It remains to prove (3) about the game
$\mathcal{G}(b,\vec{G}_t(b),\{v_0,v_t\})$. We again use induction on the number of rounds and prove that Breaker has a strategy that after his $i$-th move the following properties hold for all $i \in [t - 1]$:
\begin{enumerate}
\item[(P1')] There is at most one Maker's vertex $x$ with free outgoing edges. 
\item[(P2')] For all vertices $y\neq z$ with $y \preccurlyeq z$: If $y$ and $z$ are claimed by Maker and there is a directed path from $y$ to $z$ of length less than $2^{t - i - 1}$, then Breaker has claimed all outgoing edges at $y$.
\end{enumerate}

At the start of the game (P1') and (P2') hold by definition. The only vertex with free outgoing edges is $v_0$, and the shortest directed path between $v_0$ and $v_t$ is by construction of length $2^{t-1}$.

Now assume (P1') and (P2') hold after Breaker's $(i-1)$-th move for some $i \in \mathbb{N}$. Let $x$ be the single vertex with free outgoing edges claimed by Maker. For every other vertex $z$ with $x \preccurlyeq z$ claimed by Maker we know that the shortest directed path from $x$ to $z$ is of length at least $2 ^ {t - (i-1) - 1}$, because otherwise Breaker would have claimed all outgoing edges of $x$ by property (P2').

Let $y$ be the vertex claimed by Maker in her $i$-th move. If $x \not\preccurlyeq y$, Breaker can claim all outgoing edges at $y$, and (P1') and (P2') hold as before. Otherwise, let $\ell$ be the length of a shortest directed path from $x$ to $y$. If $\ell \geq 2 ^ {t - i - 1}$, Breaker again claims all outgoing edges at $y$, such that (P1') and (P2') hold true afterwards. Otherwise if $\ell < 2 ^{t - i - 1}$, he claims all outgoing edges at $x$ such that (P1') still trivially holds. For (P2'), it remains to show that there is no short directed path from $y$ to some vertex $z$ with $y \preccurlyeq z$ claimed by Maker. Assume there is such a path of length less than $2 ^ {t - i - 1}$ to a vertex $z$. Then there was a directed path of length less than $2 ^{t - (i - 1) - 1}$ from $x$ to $z$ after Breaker's $(i-1)$-th move contradicting the assumption of (P2'). 

To conclude we observe that Maker can only win in her turn if there are two vertices $y$ and $z$ claimed by her such that there is a directed path of length $1$ from $y$ to $z$ which is not blocked by Breaker. Thus, because (P2') holds throughout the game, Maker cannot win in less than $t$ moves.
\end{proof}

\begin{lemma} \label{lem:game.directed.H}
Let positive integers $b,t$ with $t\geq 3$ be given.
Let $\vec{H}_t(b)$ and the vertices
$x_0,\ldots,x_{b+1}$ be given according to Definition~\ref{def:directed.H}. 
Then the following holds for the game 
$\mathcal{G}(b,\vec{H}_t(b),\varnothing)$:
\begin{enumerate}
		\item Maker has a strategy to win within $t$ rounds.
		\item Breaker has a winning strategy if he can claim a single vertex before Maker starts with her first move.
		\item Breaker has a strategy to prevent Maker from winning within $t-1$ rounds.
	\end{enumerate}
\end{lemma}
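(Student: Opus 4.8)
We prove the three parts separately. Throughout I would use two features of $\vec{H}_t(b)$: every vertex except the hub $x_0$ has outdegree at most $b$ (the sinks $x_1,\dots,x_{b+1}$ have outdegree $0$, every interior vertex of a copy of $\vec{G}_{t-2}(b)$ has outdegree exactly $b$ by the remark after Definition~\ref{def:directed.G}), whereas $x_0$ has outdegree $(b+1)^2$; and distinct copies of $\vec{G}_{t-2}(b)$ share only vertices, never edges. For (1) I would give Maker an explicit strategy: claim $x_0$; then observe that Breaker's first reply consists of at most $b$ elements, each of which is either one of the targets $x_i$ or else an interior vertex/edge of a unique copy, hence "attached" to a unique target; if Breaker claimed $k$ targets, at most $b-k$ elements are attached to the $\ge b+1-k$ free targets, so by pigeonhole some free target $x_j$ has all $b+1$ of its copies to $x_0$ entirely free; claim $x_j$; after Breaker's next move at least one such copy $C$ is still free while Maker owns both endpoints $x_0,x_j$ of $C$; finish by running the strategy of Lemma~\ref{lem:game.directed.G}(1) for $\mathcal{G}(b,\vec{G}_{t-2}(b),\{x_0,x_j\})$ inside $C$ (Breaker's moves outside $C$ only help Maker), winning in $t-2$ further rounds, for a total of $2+(t-2)=t$.

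For (2), Breaker uses his free move to claim $x_0$ and then maintains, after each of his subsequent moves, the two invariants from the proof of Lemma~\ref{lem:game.directed.G}(2): (P1) at most one Maker vertex has a free outgoing edge, and (P2) whenever Maker owns $y\preccurlyeq z$, Breaker has claimed every outgoing edge at $y$. Both hold at the start since Maker owns nothing, and the inductive step is verbatim as there — when Maker claims $w$, Breaker clears the outgoing edges of $w$, or of the unique earlier active Maker vertex $w'$, whichever (P1)/(P2) demands. This costs at most $b$ edges each time, since the only vertex of outdegree larger than $b$ is $x_0$, which belongs to Breaker and so is never among the vertices being cleared. As (P2) holds after every Breaker move and Maker cannot claim an edge on her first move, Maker never claims a directed edge and hence never wins.

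Part (3) is the real work. Breaker plays second and, by (1), cannot win outright, so he aims only to delay. I would mirror the proof of Lemma~\ref{lem:game.directed.G}(3) and have Breaker maintain, after his $i$-th move for $i\in[t-1]$, a per-copy invariant: for every copy $C$ and all Maker vertices $y\neq z$ in $C$ joined inside $C$ by a directed path of length less than $2^{\,t-1-i}$, Breaker has claimed all outgoing edges at $y$ that lie in $C$. The exponent is calibrated so that after the (at most) two "set-up" rounds Maker needs merely to seize the two endpoints of some copy, it coincides with the threshold of Lemma~\ref{lem:game.directed.G}(3) applied to $\vec{G}_{t-2}(b)$, whose $x_0$–$x_j$ distance is $2^{\,t-3}$. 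Once the invariant survives Breaker's first $t-2$ moves, the standard endgame observation finishes it: if Maker won in some round $\ell\le t-1$, then just before her $\ell$-th move there would be a free directed edge $(y,z)$ inside some copy with $y,z$ both hers, i.e.\ a path of length $1<2^{\,t-\ell}$, and then the invariant already put that edge with Breaker — a contradiction. Hence Maker needs at least $t$ rounds.

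The hard part, and the point I expect to be the main obstacle, is that this invariant cannot be imported copy by copy in the obvious way: when Maker claims the hub $x_0$ (outdegree $(b+1)^2$) or a shared endpoint $x_i$ (lying on $b+1$ copies simultaneously), Breaker cannot afford to react inside every affected copy. I plan to resolve this with two observations. First, claiming $x_0$ or $x_i$ creates no immediate threat in any copy, because Maker still lacks the second endpoint of every relevant edge; it therefore suffices for Breaker to have claimed, for each copy $C$, only the single edge out of $x_0$ into $C$, and he can do this \emph{lazily}, claiming it the first time Maker plays a vertex of $C$ (using one of his $b$ elements that round). Second, I would strengthen the invariant to record that at most one Maker vertex \emph{other than $x_0$} has a free outgoing edge; then, when Maker seizes a shared endpoint $x_i$, at most one of its $b+1$ copies contains a Maker vertex still one step from $x_i$, so Breaker parries with at most $b$ edges. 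What remains is a case analysis strictly parallel to the proof of Lemma~\ref{lem:game.directed.G}(3) but keeping $x_0$ separate, verifying that Breaker's reply to any single Maker move costs at most $b$ elements and re-establishes the invariant; this is routine but somewhat tedious, and is where the bulk of the care goes.
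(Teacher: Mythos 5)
Your parts (1) and (2) are sound. Part (1) follows the paper's argument (claim $x_0$, pigeonhole to a clean bundle $x_j$, pigeonhole again to a clean copy, finish with Lemma~\ref{lem:game.directed.G}(1)); your bookkeeping of "targets" vs.\ elements "attached to" targets is slightly more explicit than the paper's but amounts to the same count. For (2), the paper instead gives $x_0$ to Breaker, pretends Maker owns all of $x_1,\dots,x_{b+1}$, and then runs Lemma~\ref{lem:game.directed.G}(2) independently on each copy of $\vec{G}_{t-2}(b)$; you re-derive the (P1)/(P2) invariant globally on $\vec{H}_t(b)$. Both routes work, and your key observation -- that once Breaker owns $x_0$, every vertex Maker could hold has outdegree at most $b$ -- is exactly why the global invariant is affordable. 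The per-copy decomposition is arguably cleaner but this is a matter of taste.

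Part (3) is where your proof has a genuine gap, and in fact you flag it yourself. You announce a global, per-copy version of the (P1')/(P2') invariant on $\vec{H}_t(b)$, observe that the shared vertices $x_0$ and $x_i$ make the naive import break, and then state that the remaining work is a "routine but somewhat tedious" case analysis which you do not carry out. That case analysis is the actual content of the statement. The pitfalls you sketch around (lazy edge-claiming for $x_0$, the strengthened invariant excluding $x_0$) are the right concerns, but they interact: a single Maker move can simultaneously create an $x_0 \to w$ threat (requiring the lazy edge) and a $w \to z$ threat (requiring Breaker to clear $w$'s outgoing edges), which if both triggered would cost $b+1$ elements. Ruling this out requires the distance/threshold arithmetic (two sub-threshold distances cannot concatenate to something that was above threshold one round earlier) plus a careful treatment of the rounds in which $x_i$ is claimed; none of this is in your write-up. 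There is also a premise error: you say "Breaker plays second and, by (1), cannot win outright, so he aims only to delay," but the paper's proof of (3) begins by showing that if Maker's first move is \emph{not} $x_0$, Breaker can in fact win outright (claim $x_0$ and run the argument of part (2)); only after reducing to the case that Maker opens with $x_0$ does Breaker need to delay. The paper then splits on Maker's second move and runs Lemma~\ref{lem:game.directed.G}(2) on the copy containing Maker's second vertex and Lemma~\ref{lem:game.directed.G}(3) (with the endpoints pretended to belong to Maker) on all other copies, so that the global invariant on $\vec{H}_t(b)$ is never needed at all. I would recommend either completing your invariant-maintenance case analysis in full, or switching to the paper's two-level case split, which avoids the budget issues entirely.
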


\begin{proof}
We first prove (1).	Maker claims $x_0$ in her first move. 
Since Breaker can only claim $b$ elements in his move, there has to be an $i \in [b+1]$ such that Breaker claimed no vertices from
the copies of $\vec{G}_{t-2}(b)$ that start in $x_0$ and end in $x_i$. Then Maker claims $x_i$, and analogously we have that after Breaker's next move there must be a copy of $\vec{G}_{t-2}(b)$
from $x_0$ to $x_i$ that Breaker did not touch yet. By
Lemma~\ref{lem:game.directed.G}(1), playing only on this copy, Maker then has a strategy
to win the game within the next $t-2$ rounds, proving (1).

\smallskip

To prove (2), Breaker claims $x_0$ as the single element before Maker's first move. 
Afterwards, we may pretend that Maker already claimed all vertices $x_1 \dots x_{b+1}$; then
every free vertex belongs to a unique copy of 
$\vec{G}_{t-2}(b)$. On these copies, Breaker can then
separately apply the strategy guaranteed by Lemma~\ref{lem:game.directed.G}(2),
and hence win the game.
	
	\smallskip

It remains to prove (3). We distinguish two cases depending on Maker's first move. First, we consider the case when Maker does not take $x_0$ in her first move, and show that Breaker actually wins the game in that case.
\begin{claim}
    If Maker does not claim $x_0$ in her first move, Breaker has a strategy to win the game.
\end{claim}
\begin{proof}
    Independent of Maker's first move (except $x_0$), Breaker will claim $x_0$ in his first move (and arbitrary other vertices or edges if $b > 1$). Afterwards, Breaker claims in each of his moves all outgoing edges of a vertex claimed by Maker which is possible because the outdegrees are bounded by $b$. By doing so, he makes sure that after his move there will be at most one vertex claimed by Maker with free outgoing edges. If Maker ever claims a vertex without outgoing edges, i.e. a vertex from $\{x_1,\ldots,x_{b+1}\}$, there will be no Maker's vertices with free outgoing edges after Breaker's next move, and after all further Breaker moves as well. This is even the case if Maker claims a vertex from $\{x_1,\ldots,x_{b+1}\}$ in her first move. Since Maker can only win if there is a Maker's vertex with free outgoing edges at the start of her turn, she will not be able to win anymore if she ever claims a vertex from $\{x_1,\ldots,x_{b+1}\}$.
    Thus, we assume she never claims a vertex from $\{x_0,\ldots,x_{b+1}\}$. Each vertex $v \notin \{x_0,\ldots,x_{b+1}\}$ belongs to a unique copy $\vec{G}$ of $\vec{G}_{t-2}(b)$. When Maker claims $v$ in any turn besides the first, Breaker answers on $\vec{G}$ according to his strategy from Lemma~\ref{lem:game.directed.G}(2) which allows him to win on $\vec{G}$ even if Maker already claimed a vertex of $\vec{G}$ in her first move. Moreover, in the strategy of Lemma~\ref{lem:game.directed.G}(2), Breaker claims in each of his turns all outgoing edges of a Maker's vertex as required.
\end{proof}
Hence, from now on we can assume that Maker's first move is to claim $x_0$. In this case, Breaker could actually skip his first move, so he claims arbitrary edges or vertices and pretends that he did not claim anything yet. Whenever his strategy leads him to claim an element he already possesses, he claims an arbitrary vertex instead.
We distinguish two cases depending on Maker's second move.
If Maker claims $x_i$ for some $i \in [b+1]$ in her second move, then afterwards Breaker can pretend that Maker actually claimed $x_1,\ldots,x_{b+1}$ and applies the strategy from Lemma~\ref{lem:game.directed.G}(3) to every copy of $\vec{G}_{t-2}(b)$, to ensure that Maker needs at least $t-2$ further rounds for winning, as required. In this case, Breaker's second move is again arbitrary and could be skipped.
Otherwise, assume that Maker claims a vertex $v\notin \{x_0,\ldots,x_{b+1}\}$ in her second move, i.e. a vertex $v$ that again belongs to a unique copy $\vec{G}_v$ of $\vec{G}_{t-2}(b)$ from $x_0$ to a vertex $x_i$, $i\in [b+1]$. Whenever Maker plays on $\vec{G}_v$ (including $v$ in her second move, and potentially $x_i$ in a future move), Breaker answers with his strategy from Lemma~\ref{lem:game.directed.G}(2), preventing her from winning on $\vec{G}_v$. Whenever Maker plays on any other copy $\vec{G}$ of $\vec{G}_{t-2}(b)$ from $x_0$ to some $x_i$, $i \in [b+1]$, Breaker pretends that Maker already claimed the corresponding $x_i$, and responds on $\vec{G}$ according to his strategy from Lemma~\ref{lem:game.directed.G}(3) making sure that Maker cannot win on $\vec{G}$ in less than $t-2$ rounds. Thus, she cannot win in less than $t$ rounds as required, since she already claimed $x_0$ and $v$ in her first two moves.
\end{proof}

\smallskip

\subsection{From digraphs to hypergraphs}\label{sec:branched.paths}
Getting slightly closer to our final constructions, we now start creating suitable hypergraphs based on the directed
multigraphs $\vec{H}_t(b)$. 

\begin{definition}\label{def:hypergraph.H}
	Let $m,b,s,t \in \mathbb{N}$ with $s \geq 2m+1$, $m \leq b$, and $t \geq \lceil \frac{s}{m} \rceil$.
	We define an $s$-uniform hypergraph
	$\mathcal{H}(m,b,s,t)$ 
    and a family $\mathcal{V}$ of associated vertex subsets	
	as follows:
	\begin{itemize}
		\item[(a)] Let $s\leq 3m$. Then take 
		$\vec{H} := \vec{H}_t(b)$.
		Replace all vertices $x$ of $\vec{H}$ with
		pairwise disjoint sets $V_x$ of size $m$ each,
		and replace every directed edge $(x,y)$
		with a hyperedge that contains
		$V_x$ and $V_y$ plus $s-2m$ further vertices
		that are not used for any other hyperedge.
		Call the resulting $s$-uniform hypergraph 
			$\mathcal{H}(m,b,s,t)$, and 
		set $\mathcal{V}=\{V_x:~x\in V(\vec{G})\}$
		to be its family of associated sets.
		
		\item[(b)] Let $s > 3m$, the hypergraph 
		$\mathcal{H}(m,b,s,t)$ is constructed as follows: 
		we take a set $V$ of $m$ vertices 
		and $b+1$ copies of $\mathcal{H}(m,b,s-m,t-1)$
        that are disjoint from each other and from $V$.	
        Then we add $V$ to every hyperedge of every copy of 
        $\mathcal{H}(m,b,s-m,t-1)$.
        Call the resulting $s$-uniform hypergraph 
			$\mathcal{H}(m,b,s,t)$.
		Moreover, let $\mathcal{V}$  
		contain $V$ and all associated sets from all 
		the $b+1$ copies of 
		$\mathcal{H}(m,b,s-m,t-1)$ used in this construction.
	\end{itemize}

\end{definition}

Figure~\ref{fig:hyper-constructions} shows how a copy of $\vec{G}_3(2)$ contained in $\vec{H}_5(2)$ is transformed
into a subhypergraph of $\mathcal{H}(5,2,14,5)$, according to the above definition.

\begin{figure}[ht]
    \centering
\includegraphics[scale=0.7,page=2]{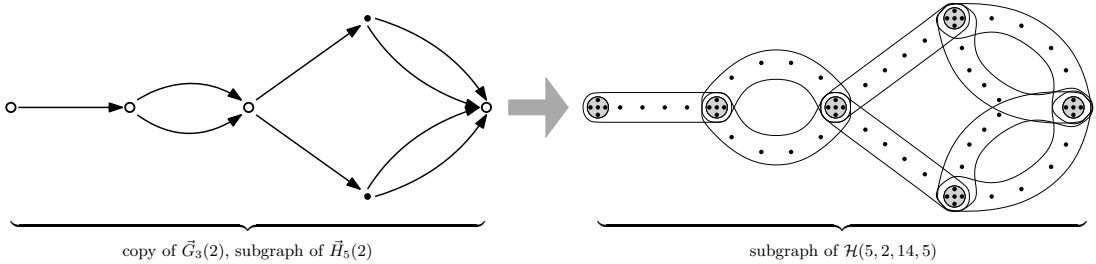}
    \caption{Replacing directed edges with hyperedges in Definition~\ref{def:hypergraph.H}(a)}
    \label{fig:hyper-constructions}
\end{figure}

The following can be seen with an easy induction on $s$.

\begin{observation}\label{obs:branched.path.overlap}
	Let $m,b,s,t \in \mathbb{N}$ with $s \geq 2m+1$, $m \leq b$, and $t \geq \lceil \frac{s}{m} \rceil$.
	The hypergraph $\mathcal{H}(m,b,s,t) = (X, \mathcal{F})$ 
	fulfils the following properties:
	\begin{enumerate}
		\item For any $V \in \mathcal{V}$ and any $F \in \mathcal{F}$ we have that $V \subset F$ or $V \cap F = \varnothing$.
		\item For each vertex $x \in X \setminus \bigcup_{V \in \mathcal{V}} V$ we have that $x$ is in at most one hyperedge from $\mathcal{F}$.
	\end{enumerate}
\end{observation}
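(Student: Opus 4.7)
The plan is to proceed by induction on $s$, mirroring the recursive structure of Definition~\ref{def:hypergraph.H}.

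First I would handle the base regime $2m+1 \leq s \leq 3m$, where $\mathcal{H}(m,b,s,t)$ arises from $\vec{H}_t(b)$ by blowing up each vertex $x$ into an $m$-set $V_x$ and each directed edge $(x,y)$ into a hyperedge of the form $V_x \cup V_y \cup E_{(x,y)}$, with $E_{(x,y)}$ a private set of $s-2m$ fresh vertices. Property (1) then reduces to the observation that the sets $V_z$ are pairwise disjoint and disjoint from all private sets, so $V_z \subset F_{(x,y)}$ iff $z \in \{x,y\}$ and $V_z \cap F_{(x,y)} = \varnothing$ otherwise. Property (2) is immediate from the fact that each private set $E_{(x,y)}$ is attached to exactly one hyperedge.

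For the inductive step $s > 3m$, I would write every hyperedge of $\mathcal{H}(m,b,s,t)$ in the form $F = V \cup F'$, where $F'$ is a hyperedge of a uniquely determined copy $\mathcal{H}_i$ of $\mathcal{H}(m,b,s-m,t-1)$. The verification of (1) then splits into three cases according to whether $V' \in \mathcal{V}$ equals $V$ (then $V' \subset F$ trivially), is an associated set of the same copy $\mathcal{H}_i$ (apply the inductive hypothesis to $F'$ within $\mathcal{H}_i$, then use $V' \cap V = \varnothing$), or lives in a different copy $\mathcal{H}_j$ with $j \neq i$ (use the disjointness of the copies and of each copy from $V$ to conclude $V' \cap F = \varnothing$). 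Property (2) follows by noting that any vertex $x \in X \setminus \bigcup_{V'} V'$ lies in exactly one of the copies, so invoking the inductive hypothesis inside that copy gives the bound on the number of containing hyperedges of $\mathcal{F}$.

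The whole argument is essentially disjointness bookkeeping, so no genuine obstacle is expected; the only step that deserves a moment of attention is confirming in the inductive step that augmenting each $F'$ by the shared set $V$ preserves both halves of property (1), which is automatic because $V$ is chosen disjoint from every recursive copy, and that the disjointness of the $b+1$ copies prevents a stray vertex from lying in hyperedges of two different copies.
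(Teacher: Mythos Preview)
Your proposal is correct and follows precisely the approach the paper indicates: the paper merely states that the observation ``can be seen with an easy induction on $s$'' and omits all details, and your write-up supplies exactly those details in the natural way, splitting into the base regime $2m+1 \le s \le 3m$ (construction from $\vec{H}_t(b)$) and the inductive step $s > 3m$ (adjoining the common set $V$ to $b+1$ disjoint copies).
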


Due to these properties we can allow ourselves to make some simple assumptions on Maker's strategy when playing on $(X,\mathcal{F})$, as summarized by the following proposition. 

\begin{proposition}\label{prop:branched.path.maker.plays.on.v}
	Let $m , b \in \mathbb{N}$ with $m\leq b$, and let a hypergraph $\mathcal{H} = (X,\mathcal{F})$ with a collection $\mathcal{V}$ of disjoint vertex-sets fulfilling the following properties be given:
	\begin{enumerate}
		\item $|F| > m$ for all $F \in \mathcal{F}$.
		\item $|V| = m$ for all $V \in \mathcal{V}$.
		\item For any $V \in \mathcal{V}$ and any $F \in \mathcal{F}$ we have that $V \subset F$ or $V \cap F = \varnothing$.
		\item For each vertex $x \in X \setminus \bigcup_{V \in \mathcal{V}} V$ we have that $x$ is in at most one hyperedge from $\mathcal{F}$.
	\end{enumerate}
	If Maker has a winning strategy playing an $(m:b)$-game on $\mathcal{H}$, she also has a winning strategy playing an $(m:b)$-game on $\mathcal{H}$ where in each of her moves she either claims all vertices of a set $V \in \mathcal{V}$, or fully claims all remaining vertices of a winning set $F \in \mathcal{F}$ to immediately win the game.
\end{proposition}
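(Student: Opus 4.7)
The plan is to normalize Maker's winning strategy $\mathcal{S}$ into one of the required form via a shadow-game coupling. We run $\mathcal{S}$ in a virtual shadow game $(M_s,B_s)$ alongside the real game $(M_r,B_r)$; at all times the shadow Breaker is defined to copy the cumulative moves of the real Breaker, so that shadow Maker really does face a legitimate play of the $(m:b)$ Maker-Breaker game. The real Maker's strategy $\mathcal{S}'$ projects each shadow move to a $\mathcal{V}$-aligned real move (or a finishing move), exploiting properties (3) and (4): since $\mathcal{V}$-atoms are either entirely inside or entirely disjoint from each hyperedge, and each free vertex in $X\setminus\bigcup_{V\in\mathcal{V}}V$ lies in at most one hyperedge, there is a canonical decomposition $F = F^\circ \cup \bigcup_{V\in\mathcal{V},\,V\subset F} V$ that $\mathcal{S}'$ will respect.

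Concretely, in each real round, $\mathcal{S}'$ first performs a \emph{finishing check}: if there is some $F\in\mathcal{F}$ with $|F\setminus M_r|\leq m$ and $(F\setminus M_r)\cap B_r=\varnothing$, Maker claims $F\setminus M_r$ and wins. Otherwise she advances the shadow game by one round, obtaining shadow Maker's move $U_s$; by properties (2) and (3), the vertices of $U_s$ split into those lying in $\mathcal{V}$-atoms and those outside $\bigcup_V V$. She then selects some $V_0\in\mathcal{V}$ with $V_0\cap U_s\neq\varnothing$, $V_0\not\subseteq M_r$, and $V_0\cap B_r=\varnothing$, and claims all of $V_0$ in the real game. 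The invariants to maintain are (a) $M_r$ is a union of complete $\mathcal{V}$-sets, (b) $B_s\subseteq B_r$, and (c) for every $V\in\mathcal{V}$ with $V\cap M_s\neq\varnothing$, either $V\subseteq M_r$ or $V\cap B_r\neq\varnothing$. Together these imply that once shadow Maker completes some $F\subset M_s$ (which must happen because $\mathcal{S}$ is winning), all $V\subset F$ have been fully claimed in the real game and, by property (4), the free vertices of $F$ have not been touched by real Breaker; the finishing check then fires and $\mathcal{S}'$ wins.

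The hard part will be handling the rounds in which $U_s$ supplies no useful $V_0$, namely when $U_s$ is entirely free or touches only $\mathcal{V}$-atoms that are already in $M_r$ or already spoiled by Breaker. In that case real Maker may either make a placeholder $V$-claim on any still-available $V\in\mathcal{V}$, or skip her move entirely, without breaking invariants (a)--(c); the delicate point is to bound these wasted rounds and argue they do not delay the finishing trigger. The key observation will be that, by property (3), any hyperedge $F$ with $F\subset M_s$ forces shadow Maker to have touched every $V\subset F$, so real Maker's $V$-claims mandated by step (ii) suffice to cover all $\mathcal{V}$-atoms of each completed shadow hyperedge; and by property (4), free vertices claimed by shadow Maker can safely be deferred, as they become relevant only at the moment a hyperedge $F$ is completable, at which point the real finishing check succeeds. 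Formalizing this case analysis rigorously, and verifying that the coupling stays synchronized after the placeholder or skipped rounds, is the technical crux of the argument.
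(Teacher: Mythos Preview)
Your coupling is oriented the wrong way. Since shadow Breaker only copies real Breaker and must skip any vertex already in $M_s$, you get $B_s\subseteq B_r$, so shadow Maker faces a \emph{weaker} opponent than real Maker. Winning the easier game says nothing about the harder one, and both key deductions break. First, invariant~(c) is not maintained: if $m\ge 2$ and a shadow move $U_s$ meets two fresh atoms $V_0,V_1\in\mathcal{V}$, real Maker can claim only one, say $V_0$; real Breaker is under no obligation to touch $V_1$, so after the round $V_1\cap M_s\neq\varnothing$ while $V_1\not\subseteq M_r$ and $V_1\cap B_r=\varnothing$. Second, even granting~(c), it does not give ``all $V\subset F$ lie in $M_r$'': the disjunct $V\cap B_r\neq\varnothing$ is fully compatible with $F\subset M_s$, because $B_r\setminus B_s\subseteq M_s$. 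The same objection kills the free-vertex claim: if shadow Maker takes $x\in F^\circ$ and real Maker defers it, real Breaker may claim $x$; then $x\in B_r\setminus B_s$, shadow Maker still completes $F$, and real Maker cannot. Property~(4) only localises the damage to the single edge $F$, which is exactly the edge you needed. The difficulty you flag (rounds with no useful $V_0$) is not the real obstacle; the obstacle is rounds with \emph{too many} useful $V_0$'s.

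The paper argues from Breaker's side instead. If some Maker move $M_j\notin\mathcal{V}$ is not immediately winning, then using $b\ge m$ Breaker can, for each $x\in M_j$, claim a vertex lying in every surviving edge through $x$: either another vertex of the atom $V\ni x$ (which Maker has not fully claimed since $M_j\neq V$), or any remaining vertex of the unique edge through $x$ when $x$ is free. Hence every edge meeting $M_j$ is killed by Breaker's reply, the move $M_j$ is wasted, and it may be deleted from Maker's strategy; iterating yields the normalised strategy.
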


\begin{proof}
	Let $M_i$ denote the set of vertices Maker takes in the $i$-th round of the game when she plays her winning strategy, and let $\mathcal{F}_i$ denote all hyperedges which are not touched by Breaker before the $i$-th round. Let $j$ be the smallest integer such that $M_j \notin \mathcal{V}$. Assume Maker did not win after $j$ moves. We aim to show that 
Breaker can claim a set $B_j$ such that for all $F \in \mathcal{F}_j$ with $F \cap M_j \neq \varnothing$ we also have $F \cap B_j \neq \varnothing$. Note that $|B_j| \geq |M_j|$ since $b \geq m$. Thus, it suffices to show that for each $x \in M_j$
Breaker can claim a vertex $y \in X$  such that $y \in F$ for all $F \in \mathcal{F}_j$ with $x \in F$. If $x \notin \bigcup_{V \in \mathcal{V}} V$, then there is at most one $F \in \mathcal{F}_j$ with $x \in F$ by (4). Since Maker did not win with her $j$-th move, there are still vertices of $F$ available, and Breaker can claim an arbitrary vertex of those. Otherwise, there is $V \in \mathcal{V}$ with $x \in V$. Since $M_j \notin \mathcal{V}$, and $M_i \in \mathcal{V}$ for all $i < j$, Maker cannot have claimed all vertices of $V$ yet. If Breaker has not already claimed a vertex of $V$, he can take an arbitrary $y \in V \setminus M_j$. By (3), $y \in F$ for all $F \in \mathcal{F}_j$ with $x \in F$ as required.
	
	Hence, Maker is not able to complete any winning set that contains a vertex from $M_j$, and she also wins the game if she skips her $j$-th move. Repeating this argument gives the desired winning strategy.
\end{proof}

In particular, we can deduce the following main lemma of this section.

\begin{lemma}\label{lemma:speed.on.hypergraph}
	Let $m,b,s,t \in \mathbb{N}$ with $s \geq 2m+1$, $m \leq b$ and $t \geq \lceil \frac{s}{m} \rceil$, and let 
	$\mathcal{H} = (\mathcal{X},\mathcal{F}) := \mathcal{H}(m,b,s,t)$ be given by Definition~\ref{def:hypergraph.H}. Then the following holds for the $(m:b)$ Maker-Breaker game on $\mathcal{H}$:
	\begin{enumerate}
		\item Maker has a strategy as the starting player to win in $t$ rounds.
		\item Breaker has a winning strategy if he can claim a single vertex before Maker starts with her first move.
		\item Breaker has a strategy to prevent Maker from winning within $t-1$ rounds, even if Maker starts the game. 
	\end{enumerate}
\end{lemma}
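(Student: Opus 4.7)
I proceed by induction on $s$, proving (1)--(3) simultaneously. The base case $s\leq 3m$ is handled by translating the auxiliary-game strategies from Lemma~\ref{lem:game.directed.H}; the inductive step $s>3m$ uses the recursive decomposition from Definition~\ref{def:hypergraph.H}(b).

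\textbf{Base case $(s\leq 3m)$.} Here $\mathcal{H}(m,b,s,t)$ is obtained from $\vec{H}_t(b)$ by replacing every vertex $x$ with a set $V_x$ of size $m$ and every directed edge $(x,y)$ with a hyperedge $F_{xy}\supseteq V_x\cup V_y$ plus $s-2m\leq m$ private vertices. The bound $s-2m\leq m$ lets Maker complete $F_{xy}$ in a single $m$-move as soon as she owns $V_x\cup V_y$. Translate moves via the dictionary: claiming vertex $x$ in the auxiliary game corresponds to claiming all of $V_x$ for Maker and to claiming a single vertex of $V_x$ for Breaker; claiming edge $(x,y)$ corresponds to completing $F_{xy}$ for Maker and to claiming one private vertex of $F_{xy}$ for Breaker. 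Each Maker move then uses at most her bias $m$, while Breaker's $b$ hypergraph vertices translate to at most $b$ auxiliary actions. For (1) and (2) I simply execute the strategies of Lemma~\ref{lem:game.directed.H}(1) and (2) through the dictionary. For (3) I argue by contradiction: if Maker won in $k\leq t-1$ rounds, then by Observation~\ref{obs:branched.path.overlap} the hypotheses of Proposition~\ref{prop:branched.path.maker.plays.on.v} hold, and its proof yields a structured winning strategy (each move either claims a full $V_x\in\mathcal{V}$, completes a hyperedge, or skips) that finishes in the same number of rounds. Translating back gives a Maker strategy winning $\mathcal{G}(b,\vec{H}_t(b),\varnothing)$ in at most $k\leq t-1$ rounds, contradicting Lemma~\ref{lem:game.directed.H}(3).

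\textbf{Inductive step $(s>3m)$.} Decompose $\mathcal{H}(m,b,s,t)$ into the top-level set $V$ of size $m$ together with $b+1$ pairwise disjoint copies $C_0,C_1,\ldots,C_b$ of $\mathcal{H}':=\mathcal{H}(m,b,s-m,t-1)$, with $V$ adjoined to every hyperedge. Since $V\subseteq F$ for every $F\in\mathcal{F}$, Maker cannot complete any hyperedge without owning all of $V$. Observation~\ref{obs:branched.path.overlap} again lets me apply Proposition~\ref{prop:branched.path.maker.plays.on.v}, so I may assume Maker plays a structured strategy; crucially each non-skip structured move lies inside exactly one of the pieces $V,C_0,\ldots,C_b$. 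For (1), Maker claims all of $V$ in round 1; since Breaker touches at most $b$ of the $b+1$ copies, some copy $C^*$ remains untouched, and inductive (1) on $C^*$ finishes in $t-1$ further rounds. For (2), Breaker's free opening vertex goes into $V$, blocking every hyperedge. For (3), if Maker's first move does not fully claim $V$, Breaker takes a remaining vertex of $V$ and blocks every hyperedge forever; otherwise Maker claims all of $V$, and Breaker uses his $b$ opening vertices to plant, in each of $b$ chosen copies $C_1,\ldots,C_b$, the specific single vertex that activates the inductive (2) strategy there. Let $C^*$ be the remaining copy. Because structured Maker plays inside a single piece per round, Breaker can in every subsequent round commit his full bias $b$ to that piece and follow the appropriate induction hypothesis: inductive (2) in any $C_i$ (so Maker never wins inside $C_i$) and inductive (3) in $C^*$. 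Hence any winning move must occur in $C^*$, and inductive (3) on $\mathcal{H}'$ forces at least $t-1$ of Maker's moves to lie in $C^*$; combined with the first round spent on $V$, her earliest winning round is $\geq t$, contradicting any hypothetical win in $\leq t-1$ rounds.

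\textbf{Main obstacle.} The truly delicate point is the bandwidth issue in (3) of the inductive step: a priori Breaker has to police $b+1$ sub-games with total bias only $b$. The resolution is Proposition~\ref{prop:branched.path.maker.plays.on.v}: restricting Maker to a structured strategy forces each of her non-skip moves into one piece, so Breaker can concentrate his entire bias in that piece and invoke the per-piece inductive hypothesis cleanly, and the $b$ permanent head-start vertices planted in round $1$ disable the $b$ non-$C^*$ copies for the rest of the game. A secondary point is the round-preserving faithfulness of the dictionary in the base case, which again rests on the same proposition together with Observation~\ref{obs:branched.path.overlap}.
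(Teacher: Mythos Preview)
Your proof is correct and follows essentially the same inductive route as the paper. Two remarks. First, in the base case, part (2) also needs Proposition~\ref{prop:branched.path.maker.plays.on.v} to reduce to structured Maker before the dictionary makes sense (an arbitrary Maker move spreading one vertex into each of $m$ different $V_x$'s does not translate to a single auxiliary action); the paper invokes the proposition once up front for all three parts. Second, your handling of (3) in the inductive step differs tactically from the paper's: you have Breaker spend his first $b$ vertices planting the inductive-(2) seed in $b$ of the copies and then run (2) there and (3) on the lone remaining copy $C^*$, whereas the paper simply has Breaker \emph{skip} his first move and apply inductive (3) on every copy, responding each round only in the copy Maker just touched. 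The paper's version is shorter and sidesteps the bandwidth bookkeeping you flag as the main obstacle, since no copy ever needs Breaker's attention before Maker first plays there; your version also works and makes it explicit that Maker is funneled into a single copy.
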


\begin{proof}
We prove the statement by induction on $s$.

Assume first that $s\leq 3m$. Let $\mathcal{V}$ denote the family of associated vertex sets for
$\mathcal{H}(m,b,s,t)$, which is constructed from $\vec{H}_t(b)$
such that $\mathcal{V} = \{V_x:~ x\in V(\vec{H}_t(b))\}$. 
By Proposition~\ref{prop:branched.path.maker.plays.on.v}
we can assume that Maker always claims all vertices of a set $V\in \mathcal{V}$, or fully claims all remaining vertices of a winning set $F \in \mathcal{F}$ to win the game.
This can be represented as Maker claiming vertices of 
$\vec{H}_t(b)$, or claiming a directed edge $(v,w)$ to immediately win the game on $\vec{H}_t(b)$.
Moreover, whenever Breaker claims a vertex $x$ of $\mathcal{H}(m,b,s,t)$, this can be represented as Breaker claiming a vertex from $\vec{H}_t(b)$ (if $x\in V$ for some $V\in \mathcal{V}$)
or an edge of $\vec{H}_t(b)$ (if $x$ does not belong to a set $V\in \mathcal{V}$), and vice versa. Therefore, the statements (1)--(3) in Lemma~\ref{lemma:speed.on.hypergraph}
follow from the properties (1)--(3) in
Lemma~\ref{lem:game.directed.H}.

\smallskip

Assume then that $s> 3m$. 
To prove (1), Maker claims in her first move all vertices of the set $V$ described in part (b) of Definition~\ref{def:hypergraph.H}. After Breaker's move, there has to be a copy $\mathcal{H}'$ of $\mathcal{H}(m,b,s-m,t-1)$ which is not touched by Breaker yet, and thus, Maker wins on $\mathcal{H}'$ in $t-1$ further rounds by induction.
To prove (2), Breaker chooses for his first single element any vertex from the set $V$ described in part (b) of Definition~\ref{def:hypergraph.H}. Since every vertex of $V$ is part of every hyperedge, Maker cannot win anymore.

Finally, we consider (3). Maker has to claim all vertices of $V$ in her first move. Otherwise, Breaker would be able to claim a single vertex from $V$, and thus wins as already explained. Afterwards, Breaker can skip his first move and still achieve his goal by playing according to the strategy promised by (3) given by the induction hypothesis to prevent Maker from winning in less than $t - 2$ further rounds on any copy of $\mathcal{H}(m,b,s-m,t-1)$. Hence, in total Maker does not win within $t-1$ rounds.
\end{proof}

\begin{remark}\label{remark:skipping.move}
Note that the strategies given for property (3) in Lemma~\ref{lem:game.directed.H} and Lemma~\ref{lemma:speed.on.hypergraph} have the following property: depending on Maker's first move, Breaker can either skip his first move and still ensure that the game lasts at least $t-1$ rounds, or Breaker can prevent Maker from winning.
\end{remark}

Having the last lemma in our hands, we are now ready to prove our main theorems.

\subsection{Proof of Theorem~\ref{thm:construction.MB.time.size}}

Let $m,b,s,s',t,t'$ be given as stated in Theorem~\ref{thm:construction.MB.time.size}. 
We use the following construction: $\mathcal{H}$ consists disjointly of $b$ copies of $\mathcal{H}(m,b,s,t)$ denoted by $\mathcal{H}_i, i \in [b]$, and a single copy of $\mathcal{H}(m,b,s',t')$ denoted by $\mathcal{H}_{b+1}$.
 By Observation~\ref{obs:branched.path.overlap} we see that $\mathcal{H}$ fulfils the requirements of Proposition~\ref{prop:branched.path.maker.plays.on.v}, and hence we can assume that Maker in each of her moves claims only elements that belong to the same winning set when playing on $\mathcal{H}$.

First, if Maker starts the game, she has a strategy to claim a winning set of size $s$ in $t$ rounds on $\mathcal{H}_1$. She cannot claim a smaller winning set since every hyperedge has at least $s$ vertices. Additionally, Maker cannot win faster than within $t$ rounds on any single copy $\mathcal{H}_i, i \in [b+1]$ due to Property (3) of Lemma~\ref{lemma:speed.on.hypergraph}. Because of Proposition~\ref{prop:branched.path.maker.plays.on.v}, Maker claims in each of her move only elements from a single $\mathcal{H}_i, i \in [b+1]$, and she cannot win faster on $\mathcal{H}$ than on a single copy $\mathcal{H}_i, i \in [b+1]$.

Second, when Breaker starts the game, he can claim a single vertex from $\mathcal{H}_i$ for each $i \in [b]$, and thereby is able to prevent Maker from winning on those copies because Maker always needs to play in a single copy $\mathcal{H}_i, i \in [b]$ due to Proposition~\ref{prop:branched.path.maker.plays.on.v}, and Breaker can respond on that copy by the strategy guaranteed by Lemma~\ref{lemma:speed.on.hypergraph}(2). Afterwards, Maker can win on $\mathcal{H}_{b+1}$ within $t'$ rounds (but not faster) claiming a winning set of size $s'$ (but not smaller), again by Lemma~\ref{lemma:speed.on.hypergraph}. \qed

\subsection{Proof of Theorem~\ref{thm:construction.Gledel.improved}}

Let $m,b,r,s,s',t,t'$ be given as stated in Theorem~\ref{thm:construction.Gledel.improved}. 
Let
$\mathcal{H}$ be the hypergraph constructed in the previous proof, and obtain $\mathcal{H}'$ by adding disjointly a single hyperedge $F_\gamma$ with $|F_\gamma| = r$. 
Since $r>m$, the additional hyperedge does not help Maker
claim a winning set, and hence $\mathcal{H}'$ also satisfies the properties (1)
and (2) from Theorem~\ref{thm:construction.MB.time.size}.
Next, let $G$ be obtained from Lemma~\ref{lemma:from.H.to.G}
with input $\mathcal{H}'$. 
First, $\gamma(G) = r$, due to Claim~\ref{clm:transference.claim2}
and since the smallest winning set of $\mathcal{H}'$ is $F_\gamma$.
Second, due to Lemma~\ref{lemma:from.H.to.G} and the properties of $\mathcal{H}'$, we obtain
 $s_{MB}(G,m:b) = s$,
 $\gamma_{MB}(G,m:b) = t$, 
 $s_{MB}'(G,m:b) = s'$ and $\gamma_{MB}'(G,m:b) = t'$. \qed

\subsection{Proof of Theorem~\ref{thm:construction.large.before.small}}
Let $m,b,s,t$ be given as stated in Theorem~\ref{thm:construction.large.before.small}. We first construct a hypergraph $\mathcal{H}$ in the following way: We take a copy of $\mathcal{H}(m,b,s,\lceil \frac{t}{m} \rceil + 1)$ on a vertex set $X$, and add a hyperedge for every subset of $X$ of size $t$. Then every winning set in the game on $\mathcal{H}$
is either of size $s$ or of size $t$. According to Lemma~\ref{lemma:speed.on.hypergraph} Maker can claim a winning set of size $s$ in $\lceil \frac{t}{m} \rceil + 1$ rounds, but not faster.
Moreover, after $\lceil \frac{t}{m} \rceil$ rounds, Maker occupies a winning set of size $t$, independent of how she plays, because every set of $t$ vertices forms a winning set.

Next, let $G$ be the result of calling Lemma~\ref{lemma:from.H.to.G} with input $\mathcal{H}$. We aim to show that $G$ satisfies the required properties. By Lemma~\ref{lemma:from.H.to.G}
we immediately have
$s_{MB}(G,m:b) = s$. 
Hence, it remains to show that
Dominator cannot claim a dominating set of size $s$ before having a
minimal dominating set of size $t$.
To do so, fix any Dominator's strategy.
Then let Staller play on $G$ according to the strategy given in the proof of 
Lemma~\ref{lemma:from.H.to.G},
where $\mathcal{S}_B$ is a Breaker's strategy
that prevents Maker's win
on $\mathcal{H}(m,b,s,\lceil \frac{t}{m} \rceil + 1)$ 
for $\lceil \frac{t}{m} \rceil$ rounds.
Assume that Dominator at some point occupies a dominating set $D$ of size $s$. As explained 
in the proof of 
Lemma~\ref{lemma:from.H.to.G},
the set $D'=D\cap X$ must then be
a dominating set of $G$, and hence must contain a winning set $F$ of $\mathcal{H}$, because of 
Claim~\ref{clm:transference.claim3}.
However, as before, by using the strategy $\mathcal{S}_B$ and since $|F| \leq |D'|\leq s$, Dominator needs to play at least $\lceil \frac{t}{m} \rceil + 1$ rounds on $X$ 
in order to occupy such a set $D'$, and thus claim at least $t + m$ elements of $X$.
The set of the first $t$ vertices 
Dominator claims on $X$ then forms a winning set of $\mathcal{H}$
and hence a dominating set of $G$ by Claim~\ref{clm:transference.claim1}.
This dominating set must be minimal,
as it would otherwise need to contain a dominating set of size $s$; but Dominator cannot occupy such a set within $\lceil \frac{t}{m} \rceil$ rounds on $X$.
\qed

\subsection{Proof of Theorem~\ref{thm:construction.large.faster.than.small}}
Let $m,b,s,s',t,t'$ be given as stated in Theorem~\ref{thm:construction.large.faster.than.small}. We construct a hypergraph $\mathcal{H}$ as the disjoint union of a copy $\mathcal{H}_1$ of $\mathcal{H}(m,b,s',t)$ and a copy $\mathcal{H}_2$ of $\mathcal{H}(m,b,s,t')$. Let $G$ be the result of calling Lemma~\ref{lemma:from.H.to.G} with input $\mathcal{H}$. We aim to show that $G$ fulfills the required properties. First note that $\mathcal{H}$ fulfills the requirements of Proposition~\ref{prop:branched.path.maker.plays.on.v}. Thus, Maker can claim a winning set of size $s'$ in $t$ rounds but not faster by playing according to Lemma~\ref{lemma:speed.on.hypergraph} on $\mathcal{H}_1$. In the same way, she can claim a winning set of size $s$ in $t'$ rounds but not faster by playing on $\mathcal{H}_2$. Thus, we have $\gamma_{MB}(G,m:b)=t$ by Lemma~\ref{lemma:from.H.to.G} and $s_{MB}(G,m:b)=s$ by the same lemma and the fact, that $\mathcal{H}$ does not contain any winning sets of size smaller than $s$. 

Now, consider the following Breaker strategy on $\mathcal{H}$: Whenever Maker plays on some $\mathcal{H}_i$, Breaker will answer on that same $\mathcal{H}_i$ with the strategy given in either Lemma~\ref{lemma:speed.on.hypergraph}(2) or (3), depending on Maker's first move. Let $i \in [2]$ and $j \in [2] \setminus \{i\}$. Assume Maker's first move is played on $\mathcal{H}_i$. Breaker responds on $\mathcal{H}_i$ according to the strategy given in the proof of Lemma~\ref{lemma:speed.on.hypergraph}(3) which 
by Remark~\ref{remark:skipping.move} means that he will either prevent Maker from winning at all on $\mathcal{H}_i$, or he can skip his first move, depending on Maker's first move. If he can skip his first move, he instead claims a single element of $\mathcal{H}_j$ according to the strategy from Lemma~\ref{lemma:speed.on.hypergraph}(2) and keeps responding according to that strategy whenever Maker plays on $\mathcal{H}_j$. Otherwise if he cannot skip his first move on $\mathcal{H}_i$, he will respond on both $\mathcal{H}_i$ and $\mathcal{H}_j$ according to Lemma~\ref{lemma:speed.on.hypergraph}(3) whenever Maker plays on the respective hypergraph. Note that by applying this strategy, Breaker prevents Maker from winning on either $\mathcal{H}_1$ or $\mathcal{H}_2$, again depending on Maker's first move.

By following this strategy, Breaker ensures that Maker does not claim a winning set of size smaller than $s$ because such a set does not exist, and that she cannot win within $t - 1$ rounds because Breaker either wins on $\mathcal{H}_1$ or prevents a Maker win for at least $t - 1$ rounds by Lemma~\ref{lemma:speed.on.hypergraph}(3). In the same way, Maker cannot win on $\mathcal{H}_2$ within $t' - 1$ rounds. Additionally, Maker can only win on one of the copies, $\mathcal{H}_1$ or $\mathcal{H}_2$. If she wins within less than $t'$ rounds, she has to do so on $\mathcal{H}_1$, but then she cannot win on $\mathcal{H}_2$. Thus, the smallest winning set she can achieve is of size $s'$ in that case since $\mathcal{H}_1$ does not contain any winning sets of smaller size.

Regarding the domination game on $G$, Staller can play according to the strategy given in the proof of Lemma~\ref{lemma:from.H.to.G} with $\mathcal{S}_B$ being the described strategy on $\mathcal{H}$. By doing so, Staller can ensure the same properties as Breaker in the game on $\mathcal{H}$ following a similar argument as in the proof of Theorem~\ref{thm:construction.large.before.small}.
\qed

\medskip

\subsection{Proof of Theorem~\ref{thm:construction.MB.not.monotone}}

Let $B=\{b_1,b_2,\ldots,b_{\ell}\}$ and assume
$b_1<b_2<\ldots<b_{\ell}$.
For each $i\in [\ell +1]$, we first take a disjoint set $V_i$ of vertices such that the following size condition holds (where we set $b_0:=-1$):
$$
|V_i| = b_j ~ \Leftrightarrow ~ b_{j-1} + 1 < i \leq b_j + 1 .
$$
Then define $\calH = (X,\calF)$ as the hypergraph which
satisfies
$$
X = \bigcup_{i\in [\ell +1]} V_i
~~ \text{and} ~~
\calF = \big\{ F\subset X:~ |F\cap V_i|=1~ \text{for every }i\in [\ell + 1] \big\} .
$$

We first prove that Breaker wins the $(b:b)$ game on $\calH$ 
if $b\in B$: Let $b=b_j$ for some $j\in [\ell]$.
We note that $|V_i|\leq b$ for every $i\leq b+1$.
After Maker's first move, at least one of the sets $V_i$ with
$i\leq b+1$ does not contain a vertex claimed by Maker,
and thus Breaker can fully claim such a set $V_i$ in her first move.
As every hyperedge of $\calH$ intersect $V_i$,
Breaker immediately wins the game. 

\smallskip

Next we prove that Maker wins the $(b:b)$ game on $\calH$ 
if $b\notin B$:
In her first move, Maker claims exactly one vertex of each of the sets
$V_i$ with $i\leq b$.
Note that $|V_i|>b$ for every $i> b$.
Hence, from now on, whenever Breaker claims an element
in one of these sets $V_i$ of which Maker still does not
have a vertex, Maker can immediately
afterwards claim an element of the same set $V_i$.
By following this strategy, Maker manages to claim a vertex in
each of the sets $V_i$. These vertices form a hyperedge of $\calH$,
and hence Maker wins. \hfill $\Box$


\section{Results for Waiter-Client domination games}\label{sec:WC.domination}

\subsection{Dominating trees in Waiter-Client games}

Before we prove Theorem~\ref{thm:WC.trees},
we first consider a suitable residue graph $R(G)$ 
for any graph $G$,
and with
Corollary~\ref{cor:residue2} we provide
a Waiter-Client version of Theorem~4.4 in~\cite{gledel2019maker}.
We start with the following lemma.

\begin{lemma}\label{lem:residue1}
Let $G$ be a graph, and let $vw\in E(G)$ such that
$d_G(v)=1$ and $d_G(w)=2$. Then
$$
\gamma_{WC}(G) = 1 + \gamma_{WC}(G-\{v,w\})
~~\text{and}~~
s_{WC}(G) = 1 + s_{WC}(G-\{v,w\}).
$$
\end{lemma}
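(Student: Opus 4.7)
The plan is to prove each equality by establishing two matching inequalities. Let $u$ denote the unique second neighbor of $w$ and write $G' := G - \{v,w\}$. A useful structural observation is that for every $x\in V(G')$ we have $N_{G'}(x)=N_G(x)\cap V(G')$, and the only vertex of $V(G')$ whose neighborhood shrinks upon removing $\{v,w\}$ is $u$, who loses exactly $w$.

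For the upper bounds, I would have Dominator offer the pair $\{v,w\}$ in her first round. She is guaranteed to claim one of $v,w$, and either choice already dominates both $v$ and $w$. In all remaining rounds, Dominator plays an optimal Waiter-Client domination strategy on $G'$, targeting either the round count $\gamma_{WC}(G')$ or the size $s_{WC}(G')$. If the resulting set $D'\subseteq V(G')$ dominates $G'$, then $D'\cup\{y\}$, where $y$ is Dominator's vertex from $\{v,w\}$, dominates $G$: the vertices $v$ and $w$ are dominated by $y$, and every $x\in V(G')$ is dominated by $D'$ in $G'$, hence in $G$ by the neighborhood observation. This yields $\gamma_{WC}(G)\leq 1+\gamma_{WC}(G')$ and $s_{WC}(G)\leq 1+s_{WC}(G')$.

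For the lower bounds I plan a simulation-style Client strategy. Fix a Client strategy $\mathcal{S}'$ on $G'$ that forces the relevant bound ($\geq \gamma_{WC}(G')$ rounds for one inequality, $\geq s_{WC}(G')$ size for the other). Client plays on $G$ as follows: if Dominator offers $\{v,w\}$, take $w$; if she offers a mixed pair $\{v,x\}$ or $\{w,x\}$ with $x\in V(G')$, take the vertex from $\{v,w\}$; otherwise the pair lies in $V(G')$ and Client responds according to $\mathcal{S}'$. The key point is that in any Dominator-winning play, the pair $\{v,w\}$ must be offered at some round: otherwise the mixed-pair rule hands both $v$ and $w$ to Client, leaving $v$ undominated. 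Hence in a winning play exactly one round offers $\{v,w\}$ (Dominator receives $v$, Client receives $w$), and every other of the $T-1$ rounds offers a pair entirely inside $V(G')$, so these rounds form a legitimate Waiter-Client game on $G'$ against $\mathcal{S}'$. A short case-check, using $w\notin D$ and the fact that the only neighbors of $V(G')$-vertices lying in $\{v,w\}$ are $u$'s neighbor $w$, shows that $D':=D\setminus\{v\}$ is a dominating set of $G'$. By $\mathcal{S}'$ this forces $T-1\geq \gamma_{WC}(G')$, respectively $|D'|\geq s_{WC}(G')$, giving the matching lower bounds.

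The main (and really only) subtlety is the verification that the sub-game is genuine, i.e.\ that no round outside of the $\{v,w\}$-round offers a vertex of $\{v,w\}$ together with a vertex of $V(G')$. This comes for free from the mixed-pair rule: before the $\{v,w\}$-round neither $v$ nor $w$ is claimed, and after it both are claimed, so a mixed-pair offer in any other round would have handed the offered vertex from $\{v,w\}$ to Client, contradicting Dominator's assumed win.
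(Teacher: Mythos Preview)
Your proof is correct and follows essentially the same approach as the paper's: the upper bound via offering $\{v,w\}$ first and then running an optimal $G'$-strategy, and the lower bound via a Client strategy that plays $\mathcal{S}'$ on $V(G')$-pairs, grabs $w$ when $\{v,w\}$ is offered, and grabs the $\{v,w\}$-vertex from any mixed pair. Your write-up is in fact a bit more careful than the paper's in verifying that the simulated $G'$-game is genuine (no stray $V(G')$-vertices leak to Dominator outside the simulation in a winning play), which the paper handles only implicitly.
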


\begin{proof}
We start with the discussion on $\gamma_{WC}(G)$.
Let $\mathcal{S}_W$ be a strategy for Dominator
which ensures her win in the Waiter-Client domination game 
on $G-\{v,w\}$ within $\gamma_{WC}(G-\{v,w\})$ rounds.
Moreover, let $\mathcal{S}_C$ be a strategy for Staller
which ensures that Dominator does not win the Waiter-Client domination game on $G-\{v,w\}$ within less than
$\gamma_{WC}(G-\{v,w\})$ rounds.

In order to prove     
$\gamma_{WC}(G) \leq 1 + \gamma_{WC}(G-\{v,w\})$,
consider the following strategy for Dominator.
In the first round, Dominator offers $v$ and $w$,
and no matter what Staller does, both vertices are dominated by Dominator. Afterwards, Dominator follows the strategy $\mathcal{S}_W$
in order to dominate the remaining vertices.
In total, this takes no more than $1 + \gamma_{WC}(G-\{v,w\})$ rounds.

In order to prove 
$\gamma_{WC}(G) \geq 1 + \gamma_{WC}(G-\{v,w\})$,
consider the following strategy for Staller.
As long as Dominator does not offer a vertex from $\{v,w\}$,
Staller plays according $\mathcal{S}_C$. 
If Dominator offers $\{v,w\}$ in the same round,
then Staller claim $w$ and gives $v$ to Dominator;
afterwards Staller continues according to $\mathcal{S}_C$.
This way, $v,w$ do not help Dominator to dominate a vertex in $G-\{v,w\}$, and hence, Dominator needs at least 
$1 + \gamma_{WC}(G-\{v,w\})$ rounds for winning.
Otherwise, i.e.~if $v$ and $w$ are offered in different rounds,
then Staller claims both $v$ and $w$ for herself.
In this case, Dominator loses as she cannot dominate $v$.

\smallskip

Next, we consider $s_{WC}(G)$.
In the above argument, replace $\mathcal{S}_W$ with a strategy for Dominator
which ensures that in the Waiter-Client domination game 
on $G-\{v,w\}$ she can get a dominating set of size $s_{WC}(G-\{v,w\})$,
and replace $\mathcal{S}_C$ with a strategy for Staller
which ensures that in the Waiter-Client domination game on $G-\{v,w\}$ Dominator does not get a dominating set of size $s_{WC}(G-\{v,w\}) - 1$.
Then, $s_{WC}(G) = 1 + s_{WC}(G-\{v,w\})$
can be deduced analogously.
\end{proof}

\smallskip

Now, as in~\cite{gledel2019maker},
we obtain $R(G)$ by iteratively deleting as long as possible
adjacent vertices $v,w$ such that
$v$ is a leaf in the current graph
and $w$ is a neighbour of $v$ with degree $2$.
Note that $R(G)$ is well-defined by Lemma~4.1
in~\cite{gledel2019maker}. Moreover,
the following is an easy consequence of Lemma~\ref{lem:residue1}.

\begin{corollary}\label{cor:residue2}
For every graph $G$, we have 
$$\gamma_{WC}(G) = \frac{v(G)-v(R(G))}{2} + \gamma_{WC}(R(G))
~~ \text{and} ~~
s_{WC}(G) = \frac{v(G)-v(R(G))}{2} + s_{WC}(R(G))
.$$
\end{corollary}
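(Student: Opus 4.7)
The plan is to reduce the corollary to Lemma~\ref{lem:residue1} by a straightforward induction on the number of reduction steps. First I would define $k := (v(G)-v(R(G)))/2$, which counts the number of pairs $\{v,w\}$ removed along any sequence of elementary reductions from $G$ to $R(G)$; this quantity is an integer since each elementary step deletes exactly two vertices (a leaf $v$ and its degree-$2$ neighbour $w$).

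For the induction itself, the base case $k=0$ (i.e.\ $G=R(G)$) is immediate. For the inductive step with $k\geq 1$, I would fix any pair $v,w$ with $vw\in E(G)$, $d_G(v)=1$, and $d_G(w)=2$ (such a pair exists since $G\neq R(G)$), set $G':=G-\{v,w\}$, and invoke two facts: (i) $R(G')=R(G)$, which follows from the well-definedness of $R(G)$ cited from Lemma~4.1 of~\cite{gledel2019maker}, so $G'$ requires exactly $k-1$ further reduction steps; and (ii) Lemma~\ref{lem:residue1}, giving $\gamma_{WC}(G)=1+\gamma_{WC}(G')$ and $s_{WC}(G)=1+s_{WC}(G')$. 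Applying the induction hypothesis to $G'$ together with $v(G')=v(G)-2$ then yields
$$
\gamma_{WC}(G) \;=\; 1+\tfrac{v(G)-2-v(R(G))}{2}+\gamma_{WC}(R(G)) \;=\; \tfrac{v(G)-v(R(G))}{2}+\gamma_{WC}(R(G)),
$$
and the identical computation, with $\gamma_{WC}$ replaced by $s_{WC}$, gives the second identity.

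The only real subtlety is the fact that an arbitrary choice of reduction order does not affect $R(G)$, which is exactly the well-definedness statement already cited from~\cite{gledel2019maker}; so nothing more is needed on that front. Beyond invoking that, the corollary is a routine telescoping of Lemma~\ref{lem:residue1} and so I do not anticipate a genuine obstacle.
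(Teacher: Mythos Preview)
Your proposal is correct and is exactly the intended argument: the paper itself does not spell out a proof, merely calling the corollary ``an easy consequence of Lemma~\ref{lem:residue1}'', and your induction on the number of reduction steps (together with the well-definedness of $R(G)$ from~\cite{gledel2019maker}) is the natural way to unpack that remark.
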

    
Finally, we can use this to prove Theorem~\ref{thm:WC.trees}.

\begin{proof}[Proof of Theorem~\ref{thm:WC.trees}]
Let $T$ be a tree and let $R(T)$ be its residue.
By Lemma~4.2 in~\cite{gledel2019maker}, $T$ has a perfect matching if and only if
$R(T)$ has a perfect matching. Moreover, by Lemma 21
in~\cite{duchene2020maker},
$R(T)$ is either (a) an isolated vertex, (b) an edge or (c) contains a vertex $x$ with at least two leaves as neighbours.

In case (a), $T$ has an odd number of vertices and hence no perfect matching. Moreover, $\gamma_{WC}(R(T))=\infty$ and $s_{WC}(R(T))=\infty$,
as by the definition of Waiter-Client games,
Staller obtains the unique vertex of $R(T)$ when playing on $R(T)$ only. Thus, $\gamma_{WC}(T)=\infty$ 
and $s_{WC}(T)=\infty$ by Corollary~\ref{cor:residue2}.

In case (b), $R(T)$ has a perfect matching
and hence $T$ has a perfect matching. Moreover, as 
$\gamma_{WC}(R(T))=s_{WC}(R(T))=1$, Corollary~\ref{cor:residue2} implies $\gamma_{WC}(T) = s_{WC}(T) = \frac{v(T)}{2}$.

In case (c), $R(T)$ does not have a perfect matching
and hence $T$ does not either. Staller wins the Waiter-Client domination game on $R(T)$, as a simple case distinction shows that he can claim
the vertex $x$ and at least one of its leaf neighbours.
This leaf neighbour then cannot be dominated by Dominator.
In particular, $\gamma_{WC}(R(T))=\infty$ and $s_{WC}(R(T))=\infty$, and hence
$s_{WC}(T)=\infty$ and $\gamma_{WC}(T)=\infty$ by Corollary~\ref{cor:residue2}.
\end{proof}

\subsection{Dominating cycles in Waiter-Client games}

\begin{proof}[Proof of Theorem~\ref{thm:WC.cycle}]
Let the vertices of $C_n$ be labeled $v_1,v_2,\ldots,v_n$
such that $v_iv_j$ forms an edge if $i-j \equiv \pm 1$ (mod $n$).
We first show $\gamma_{WC}(C_n)\leq \lfloor \frac{n}{2} \rfloor$ and $s_{WC}(C_n)\leq \lfloor \frac{n}{2} \rfloor$, by describing a Dominator strategy. In the first round, let Dominator offer $\{v_{n-1},v_n\}$. By symmetry we can assume that Dominator receives the vertex $v_{n-1}$.
Afterwards let $k$ be the largest even integer with $k\leq n-2$.
Playing on the path $(v_1,\ldots,v_k)$,
Dominator can dominate all its vertices within $\frac{k}{2}$ rounds
by Theorem~\ref{thm:WC.trees}. As $v_{n-2},v_{n-1},v_n$
are dominated by the vertex $v_{n-1}$ from the first round,
Dominator has a dominating set then. In total, she used
$\frac{k}{2}+1 = \lfloor \frac{n}{2} \rfloor$ rounds.

\smallskip

Next we show $\gamma_{WC}(C_n)\geq \lfloor \frac{n}{2} \rfloor$.
This inequality is easy for $n\in \{3,4,5\}$,
as in these cases the domination number of $C_n$ equals
$\lfloor \frac{n}{2} \rfloor$.
Hence, from now on we can assume that $n\geq 6$.
We consider two cases depending on the first move of Dominator.

Assume first that in the first round, Dominator offers two vertices which are not adjacent; by symmetry we may assume that she offers $v_1$ and
$v_i$ with $3\leq i\leq \lfloor \frac{n}{2} \rfloor +1 $. Then Staller can keep $v_1$ and give $v_i$ to Dominator.
Afterwards, Staller can answer almost arbitrarily to Dominator's offers, as long as he ensures the following:
\begin{itemize}
    \item[(a)] If Dominator offers $v_n$ and $v_2$ in different rounds,
      Staller keeps both of these vertices.    
    \item[(b)] If Dominator offers $v_n$ and $v_2$ in the same round,
      Staller keeps $v_n$ and gives $v_2$ to Dominator.
    \item[(c)] If Dominator offers $v_{n-1}$ together with a vertex $w\notin \{v_2,v_n\}$, then Staller keeps $v_{n-1}$.
\end{itemize}
It is easy to check that Staller can ensure all three conditions (a)--(c) to hold. If Dominator offers $v_2,v_n$ in different rounds, then by (a), the vertex $v_1$ cannot be dominated by Dominator.
Otherwise, by (b) and (c), the vertex $v_n$ cannot be dominated by Dominator. In any case, Dominator loses.

Assume then that in the first round, 
Dominator offers two adjacent vertices, w.l.o.g the vertices $v_{1}$ and $v_2$. Then Staller keeps $v_2$ and gives $v_{1}$ to Dominator. Afterwards, Staller answers to Dominator's offers as follows:
\begin{itemize}
    \item[(a)] If Dominator offers $\{v_{2i-1},v_{2i}\}$ for some
    $1\leq i\leq \lfloor \frac{n}{2} \rfloor - 1$, then Staller keeps $v_{2i}$ and gives $v_{2i-1}$ to Dominator.
    \item[(b)] Otherwise, if Dominator offers a pair of vertices different from those in (a), then Staller keeps the vertex of smaller index and gives the other to Dominator. 
\end{itemize}

Note that the first round was already played according to (a).
If throughout the game, Dominator always offers pairs as decribed in (a), she gets a dominating set, but then she plays $\lfloor \frac{n}{2} \rfloor$ rounds until she wins, and there is no dominating set with less
than $\lfloor \frac{n}{2} \rfloor$ vertices contained in Dominator's vertex set.
Otherwise, Dominator loses by the following reason:
Let $j\leq \lfloor \frac{n}{2} \rfloor - 1$ be the smallest index such that Dominator does not offer 
$\{v_{2j-1},v_{2j}\}$ in the same round. Then by following (b),
Staller claims both $v_{2j-1},v_{2j}$ for himself.
Moreover, as $v_{2j-3},v_{2j-2}$ 
were offered in the same round (by minimality of $j$), 
Staller also keeps $v_{2j-2}$ (because of (a) or the first round),
and hence, Dominator cannot dominate $v_{2j-1}$.
\end{proof}

\newpage

\subsection{Comparing Maker-Breaker and Waiter-Client domination games}

\begin{proof}[Proof of Theorem~\ref{thm:construction.MB.and.WC}]
Due to Lemma~\ref{lemma:from.H.to.G}, it is enough to find
a hypergraph $\calH_{s,t}$ such that
\begin{itemize}
\item[(i)] $s$ is the smallest number of rounds in which Maker wins the unbiased Maker-Breaker game on $\calH_{s,t}$ when Breaker starts, and
\item[(ii)] $t$ is the smallest number of rounds
in which Waiter wins the unbiased Waiter-Client game on $\calH_{s,t}$.
\end{itemize}

\textbf{Case 1:} We start with the case $s\geq t$ (for which it is actually enough to require that $t\geq 3$) and prepare for the desired construction 
with the following claim.

\begin{claim}\label{claim.WC.yes.MB.no}
For every $t\geq 3$ there is a hypergraph $\calH_{t}$
such that Maker looses on $\calH_t$,
and such that $t$ is the smallest number of rounds within which 
Waiter wins on $\calH_t$.
\end{claim}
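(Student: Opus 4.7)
My plan is to construct, for each $t \ge 3$, an explicit hypergraph $\mathcal{H}_t = (X, \mathcal{F})$ and verify the three required properties: (a) every winning set has size exactly $t$, ensuring Waiter cannot win in fewer than $t$ rounds; (b) Waiter has a strategy to win in exactly $t$ rounds; and (c) Breaker has a strategy preventing Maker from ever completing a winning set.

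My proposed construction takes the vertex set
$$X = \{a_1, b_1, a_2, b_2, \ldots, a_{t-1}, b_{t-1}\} \cup Y,$$
where the $P_i = \{a_i, b_i\}$ form $t-1$ disjoint pairs and $Y$ is an auxiliary ``gadget set'' of constant size. The winning sets in $\mathcal{F}$ have the form $T \cup \{y\}$, where $T$ is a transversal of the pairs (one element from each $P_i$) and $y \in Y$, and the pair $(T,y)$ satisfies a combinatorial compatibility relation $\mathcal{R}$ to be specified. Every winning set then has size exactly $t$, giving property (a).

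For property (b), I plan to use the following Waiter strategy. In rounds $1,\ldots,t-1$, Waiter offers the pairs $P_1,\ldots,P_{t-1}$ one at a time, accumulating a transversal $T$ formed by Client's responses. In round $t$, Waiter offers two carefully selected auxiliary vertices $y, y' \in Y$ chosen based on $T$. The compatibility relation $\mathcal{R}$ will be designed so that for every transversal $T$ there exist at least two auxiliary vertices $y, y'$ with $(T, y), (T, y') \in \mathcal{R}$; then whichever of $y, y'$ Client takes, Waiter receives a partner that completes a winning set, so Waiter wins in exactly $t$ rounds.

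For property (c), Breaker will use a layered strategy: on the pair blocks he mirrors Maker's moves (when Maker claims $a_i$, he claims $b_i$, and vice versa), forcing Maker to end up with some transversal $T^*$ determined adaptively by her play; meanwhile, on the auxiliary set $Y$, Breaker claims the vertices $\mathcal{R}$-compatible with the partial transversal emerging so far. The relation $\mathcal{R}$ will be kept sparse enough that Breaker's one-per-round budget suffices to block all $\mathcal{R}$-compatible auxiliary vertices for Maker's eventual transversal.

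\textbf{The main obstacle} is reconciling the two tensioned requirements on $\mathcal{R}$: on one hand it must be ``rich enough'' that every transversal has at least two $\mathcal{R}$-compatible auxiliary vertices (so Waiter's last-round offer always succeeds); on the other hand it must be ``sparse enough'' that Breaker, who has only one move per round, can always occupy all $\mathcal{R}$-compatible auxiliary vertices for Maker's evolving transversal before she completes a winning set. Achieving both simultaneously requires a delicate choice of $Y$ and $\mathcal{R}$, most naturally realized as a specific bipartite incidence design between the set of transversals and $Y$ with bounded degrees on both sides; for $t \ge 3$ the required design can be built by taking $|Y|$ growing mildly with $t$ and choosing $\mathcal{R}$ as the edge set of an appropriate regular bipartite graph. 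Verifying that Breaker can always meet his side of the pairing in time -- essentially a matching/pairing argument on $Y$ combined with the pair-mirror on the $P_i$'s -- is the technical core of the construction.
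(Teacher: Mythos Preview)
Your proposal is a plan rather than a proof: the relation $\mathcal{R}$ is never actually constructed, and you yourself flag its design as the unresolved ``technical core''. More concretely, the Breaker strategy you describe cannot work as written. You have Breaker ``mirror on the pair blocks (when Maker claims $a_i$, he claims $b_i$)'' and ``meanwhile'' play in $Y$; with one move per round these are mutually exclusive. Worse, mirroring on the $P_i$'s is useless here: a winning set needs only \emph{one} element from each pair, so Breaker taking the other element constrains nothing. If Maker simply claims $a_1,\ldots,a_{t-1}$ while Breaker mirrors with $b_1,\ldots,b_{t-1}$, then after $t-1$ rounds Maker holds a full transversal $T$, Breaker holds nothing in $Y$, and by your richness requirement there exist at least two $y\in Y$ with $(T,y)\in\mathcal{R}$; Maker claims one and wins.

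Your framework \emph{can} be salvaged, but the pairing must go \emph{across} the pair blocks and $Y$, not within the pairs. For instance, take $Y=\{y_1,z_1,\ldots,y_{t-1},z_{t-1}\}$ and declare $(T,y_i)\in\mathcal{R}$ iff $a_i\in T$ and $(T,z_i)\in\mathcal{R}$ iff $b_i\in T$; then every winning set contains one of the pairs $\{a_i,y_i\}$ or $\{b_i,z_i\}$, and Breaker wins by that pairing. This is different from what you wrote. The paper's approach sidesteps the whole tension: it uses only four fixed pairs $\{a_i,b_i\}$, $i\in[4]$, and requires \emph{both} $a_i$ and $b_i$ in each winning set (together with any $(t-3)$-subset of $[2t-6]$), so the obvious within-pair pairing blocks Maker immediately; the Waiter upper and lower bounds of $t$ rounds then follow from a short direct argument (three rounds, one wasted, to secure some $\{a_i,b_i\}$, plus $t-3$ rounds on $[2t-6]$). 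Note that the paper's winning sets have size $t-1$, not $t$, so the Waiter lower bound there is a Client-strategy argument rather than your size count.
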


\begin{proof}
Let $a_i,b_i\notin [2t-6]$ with $i\in [4]$ be distinct elements,
and define $\calH_t = (X_t,\calF_t)$ such that
$\calF_t := \{ \{a_i,b_i\}\cup M: i\in [4],~ M\in \binom{[2t-6]}{t-3} \}$.
By playing a pairing strategy for the pairs $\{a_i,b_i\}$,
Breaker can prevent Maker from winning.
Waiter can win within $t$ rounds as follows:
In the first two rounds, Waiter offers $a_1,a_2,a_3,a_4$
and w.l.o.g.~Waiter gets $a_1,a_2$.
Then she offers $b_1,b_2$ and w.l.o.g receives $b_1$.
For the next $t-3$ rounds, Waiter offers the elements of
$[2t-6]$ in an arbitrary way, and completes a winning set
of the form $\{a_1,b_1\}\cup M$ with 
$M\in \binom{[2t-6]}{t-3}$.
Moreover, Client can prevent Waiter from winning in less than $t$ rounds as follows: In each round, in which Waiter offers at least one element in $[2t-6]$, Client gives an element of $[2t-6]$ to Waiter. Note that Waiter needs to play at least $t-3$ such rounds.
In order to complete a winning set, Waiter moreover needs
a pair $\{a_i,b_i\}$. W.l.o.g. let $a_1$ be the first element from
$A:=\{a_i,b_i:~i\in [4]\}$ that Waiter occupies (by offering a pair in $A$), then when Waiter offers the next pair in $A$,
Client only needs to make sure that Waiter does not get $b_1$.
This way, Waiter needs to offer at least 3 pairs in $A$
until she wins, and hence needs to play at least $t$ rounds in total.
\end{proof}

Next, we set $\calH_{s,t} := \calH_t \dot\cup K_{2s}^{(s)}$
where $K_{2s}^{(s)}$ is the $s$-uniform complete hypergraph on
$2s$ vertices. By Claim~\ref{claim.WC.yes.MB.no},
the elements of $\calH_t$ do not help Maker in winning,
but Maker can obviously win within $s$ rounds (but not faster)
by playing on $K_{2s}^{(s)}$. This shows (i).
By Claim~\ref{claim.WC.yes.MB.no}, Waiter can win on $\calH_{s,t}$ within $t$ rounds by playing on $\calH_t$ only (and here she cannot be faster). Moreover, the vertices of $K_{2s}^{(s)}$
do not help her to be faster, as every winning set in 
$K_{2s}^{(s)}$ has size $s\geq t$. This proves (ii).

\medskip

\textbf{Case 2:} We then consider the case $s < t$
and prepare for the desired construction 
with the following claim. For the proof, note that
a Client-Waiter game on a hypergraph $\calH$ is played
the same way as a Waiter-Client game with the difference that
Client wins if and only if he obtains a winning set.
(Again, if there is only one element in the last round,
then it is given to Client.)

\begin{claim}\label{claim.MB.yes.WC.no}
For every $s\geq 7$ there is a hypergraph $\calH'_{s}$
such that Waiter looses on $\calH'_s$,
and such that $s$ is the smallest number of rounds within which 
Maker wins on $\calH'_s$ (if Breaker starts).
\end{claim}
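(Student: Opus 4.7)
My plan is to construct the hypergraph $\calH'_s$ as two disjoint blocks: a small ``control'' block $A$ of constant size, together with a block $B$ of roughly $s$ ``filler'' vertices. Every winning set will have the form $C \cup M$, where $C \subset A$ is one of a few prescribed valid configurations of constant size $c$, and $M \in \binom{B}{s-c}$. Every winning set then has size exactly $s$, which immediately yields the lower bound: since Maker claims one vertex per round, she needs at least $s$ rounds to claim a winning set.

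For Maker's matching $s$-round win in the $(1:1)$ Maker--Breaker game (Breaker starting), I would give her a two-phase strategy. In a short opening phase of about $c$ rounds, Maker plays inside $A$ using a threat-based argument, exploiting that $A$ contains several overlapping valid configurations so that whichever vertex Breaker claims first still leaves Maker with a live configuration she can steer into. After securing her control set $C$, she spends the remaining $s-c$ rounds claiming fillers in $B$; since $B$ is taken large enough that Breaker cannot block all the fillers Maker still needs, Maker completes a winning set in exactly $s$ rounds.

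For the Waiter--Client part I would equip Client with a reactive invariant strategy. Whenever Waiter offers a pair inside $A$, Client picks according to a pair-matching / priority rule on $A$ designed to maintain the invariant ``Waiter's currently claimed control-vertices do not extend to any valid configuration from the prescribed list.'' Whenever Waiter offers a pair involving fillers, Client responds so as to preserve a simple counting/parity bound ensuring that Waiter never simultaneously holds an almost-configuration in $A$ and enough fillers in $B$. The crucial leverage is that Waiter must commit to an offered pair before Client picks, so a carefully designed $A$ lets Client always deny Waiter the essential vertex needed to complete any configuration; the invariant is then maintained throughout the game and Waiter never claims a winning set.

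The main obstacle is making the two designs of $A$ compatible: enough redundancy in $A$ for Maker's multiple threats to survive Breaker's first move, but enough rigidity that Client's veto in Waiter--Client never has to pass over a vertex Waiter truly needs. Tuning these parameters is exactly what forces the bound $s \ge 7$: roughly four ``control pairs'' (mirroring the role of the four $a_i, b_i$ pairs in the dual construction $\calH_t$ of Claim~\ref{claim.WC.yes.MB.no}), together with a short filler tail of size about $s - c$, give the smallest setup in which Maker's threat strategy and Client's blocker strategy can coexist, yielding the stated lower bound on $s$.
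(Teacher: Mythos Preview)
Your overall framework matches the paper's: a small ``control'' block whose winning configurations Maker can secure but Waiter cannot, padded by a filler block to stretch the winning-set size to~$s$. The paper does exactly this, and the filler argument (Maker finishes in $s$ rounds, Breaker forces at least $s$ rounds, Client ignores fillers) works just as you describe.

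The genuine gap is the control block itself. You never actually construct $A$; you only assert that some combination of ``about four control pairs'' with a ``pair-matching / priority rule'' for Client should work. But this is precisely the hard part of the claim, and your heuristic points in the wrong direction. A configuration family built from a few disjoint pairs, in the style of Claim~\ref{claim.WC.yes.MB.no}, naturally gives \emph{Breaker} a pairing strategy---exactly the opposite of what you need, since here Maker (as second player) must win. Conversely, once you give Maker enough overlapping threats that Breaker cannot pair them off, it is far from clear that Client can still veto every configuration round by round; Waiter often wins on hypergraphs where Maker does. Reconciling these two requirements on a constant-size board is a genuine combinatorial problem, not a parameter tweak, and nothing in your outline addresses it.

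The paper does not solve this from scratch either. It imports a known $15$-vertex hypergraph $G_{CP}$ from~\cite{knox2012two} on which Breaker wins Maker--Breaker but Client wins Client--Waiter, and then passes to the \emph{transversal} hypergraph $\calH_0$ of $G_{CP}$. Under transversal duality these become: Maker (as second player) wins Maker--Breaker on $\calH_0$, while Waiter loses Waiter--Client on $\calH_0$. This $\calH_0$ is the control block; the filler is then attached exactly as you propose. So your plan is salvageable, but only once you supply an explicit control hypergraph with the two required properties---and the paper's route via a cited construction plus transversal duality is how that is actually done.
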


\begin{proof}
In \cite{knox2012two} (see Construction 5),
there is a hypergraph $G_{CP} = (X_{CP},\calF_{CP})$ on 15 vertices given
with the following properties:
\begin{itemize}
\item[(i')] Breaker wins the Maker-Breaker game on $G_{CP}$,
if Maker is the first player (Proposition 6 in~\cite{knox2012two}), and this takes at most $7$ rounds.
\item[(ii')] Client wins the Client-Waiter game on $G_{CP}$
(Proposition 7 in~\cite{knox2012two}).
\end{itemize}
Now, consider the transversal hypergraph $\calH_0 = (X_{CP},\calF_0)$, where
$$\calF_0:=\{F\subset X_{CP}:~ (\forall F'\in \calF_{CP}:~ F\cap F'\neq \varnothing)\}.$$
Then the above properties can be reformulated as follows:
\begin{itemize}
\item[(i'')] Maker wins the Maker-Breaker game on $\calH_0$,
if Breaker is the first player, and this takes at most $7$ rounds.
\item[(ii'')] Client wins the Waiter-Client game on $\calH_0$.
\end{itemize}
Let $r\leq 7$ be the smallest number of rounds within which
Maker can always win on $\calH_0$.
W.l.o.g assume that $X_{CP}\cap \mathbb{N}=\varnothing$.
We now consider the hypergraph $\calH'_s =(X_s,\calF_s)$ 
with $X_s := X_{CP} \cup [2s - 2r + 7]$, and
$$
\calF_s :=
\left\{ F\cup M:~ F\in \calF_0,~ M\in \binom{[2s-2r+7]}{s-r} \right\}. 
$$
Then Client wins on $\calH_s'$ as follows:
Whenever Waiter offers two elements in $X_{CP}$,
Client plays according to the strategy promised by (ii''),
and in each other case she never gives an element of $X_{CP}$
to Waiter. This way, Waiter never occupies a set from
$\calF_0$ and in particular no set in $\calF_s$.
Maker wins on $\calH_s'$ within $s$ rounds as follows:
She first plays $r$ rounds on $X_{CP}$ to occupy a
set $F\in \calF_0$, which is possible by (i''),
and then she plays $s-r$ further rounds in which she claims
arbitrary elements of $[2s-2r+7]$, which is possible as before that at most $7$ rounds were played.
Moreover, Breaker can prevent Maker from winning faster by
the following reason: Maker needs to claim a set
$F\cup M$ with $F\in \calF_0$ an $M\in \binom{[2s-2r+7]}{s-r}$.
By definition of $r$, 
Breaker can play so that for a suitable set $F$,
Maker needs to play at least $r$ rounds in $X_{CP}$,
and additionally Maker needs $s-r$ elements outside $X_{CP}$ for a suitable set $M$. 
\end{proof}

Using the above claim, we set 
$\calH_{s,t} := \calH_s' \dot\cup K_{2t}^{(t)}$.
Now, showing that (i) and (ii) hold can be done analogously 
to Case 1.
This completes the proof of Theorem~\ref{thm:construction.MB.and.WC}. 
\end{proof}


\section{Concluding remarks and open problems}\label{sec:concluding}

\textbf{Optimality.}
Let us first look at the conditions assumed in
Theorem~\ref{thm:construction.MB.time.size} and
Theorem~\ref{thm:construction.Gledel.improved}.
Since Maker as first player can always achieve
what Maker as second player can do, the inequalities
$s'\geq s$ and $t'\geq t$ need to be assumed in both theorems.
Similarly, since claiming a set of size $x$ with bias $m$
takes at least $\lceil \frac{x}{m} \rceil$ rounds,
the conditions $t\geq \lceil \frac{s}{m} \rceil$ and
$t'\geq \lceil \frac{s'}{m} \rceil$ need to be assumed as well.
Additionally, assuming $s\geq r$ in Theorem~\ref{thm:construction.Gledel.improved} comes from the fact that Maker cannot claim a winning set which is smaller than the smallest winning set, and $m\leq r-1$ is assumed since otherwise Maker would be able to win the game in the first round.
The only conditions which may not be optimal in both theorems, 
but which are required for our constructions, 
are $s\geq 2m+1$ and $m\leq b$. However, we note that these conditions are still optimal for $m=1$, which can be seen as follows. The inequality $1=m\leq b$ is clearly satisfied then.
If we would have $s\leq 2m$ in Theorem~\ref{thm:construction.MB.time.size}, then $s\in \{1,2\}$.
For property (1) in Theorem~\ref{thm:construction.MB.time.size} being true,
the hypergraph $\mathcal{H}$ would need to contain at least one winning set of size at most $s\leq 2$. If there is a winning set of size $1$, Maker can win in the first round and hence, property (1) cannot be true for arbitrary $t$. Otherwise, let $G$ be the graph induced by all hyperedges of $\mathcal{H}$ of size $2$. If 
the maximum degree $\Delta(G)$ of $G$ satisfies $\Delta(G)\geq b+1$, then Maker can occupy an edge of $G$ within 2 rounds, and therefore, $t$ cannot be chosen arbitrarily in property (1). 
If instead $\Delta(G)\leq b$, then in each of his moves Breaker can block all edges which contain the vertex previously claimed by Maker,
in contradiction to Maker being able to occupy a winning set of size $2$. The discussion for Theorem~\ref{thm:construction.Gledel.improved} is analogous.
Moreover, a similar discussion shows that, for general $m\in\mathbb{N}$, the assumption $s\geq 2m+1$ cannot be replaced with $s\geq m$. To close this gap is an open problem.

\begin{problem}\label{problem:smaller.sets}
Can the condition $s\geq 2m+1$ in Theorem~\ref{thm:construction.MB.time.size} and
Theorem~\ref{thm:construction.Gledel.improved} be replaced
with $s\geq 2m$? If yes, what is the best bound on $s$
that can be assumed here?
\end{problem}

Moreover, the above discussion can be modified to show that $s \geq 2m$ is actually not possible if the construction should fulfill the requirements of Proposition~\ref{prop:branched.path.maker.plays.on.v}. Thus, Maker's strategy to win on a construction that positively answers Problem~\ref{problem:smaller.sets} would necessarily include her claiming elements from different winning sets in some of her moves. Similarly, Proposition~\ref{prop:branched.path.maker.plays.on.v} does not work if $m > b$. In fact, if $m > b$, Maker can win on each $s$-uniform hypergraph $\mathcal{H}$ that contains a large enough matching in a number of moves depending on $s$ using a box game argument. Therefore, if the number of required moves should be arbitrarily large compared to $s$, $\mathcal{H}$ cannot contain a large matching in that case.
\medskip

\textbf{Waiter-Client domination games.}
In this paper we give first results on Waiter-Client domination games. It seems natural to study analogues of the questions discussed for Maker-Breaker domination games in e.g.~\cite{bagdas2024biased,brevsar2025thresholds,
bujtas2025criticality,divakaran2025maker,dokyeesun2023maker,duchene2020maker}. 
For instance: 
\begin{itemize}
\item Is deciding the winner of the Waiter-Client domination game  PSPACE-complete? 
\item What can be said about biased Waiter-Client domination games? 
\item What happens if Waiter wants to occupy a total dominating set? \end{itemize}

\medskip

\textbf{Random graphs.} As already observed in~\cite{bagdas2024biased}, 
it is not hard to show that with high probability,
if $G$ is a Binomial random graphs $G_{n,p}$ with
constant edge probability $p\in (0,1)$ 
or a random geometric graphs $\mathcal{G}_2(n,r)$ with a radius 
$r\in [0,1]$, 
the parameters $\gamma_{MB}(G)$ and $\gamma'_{MB}(G)$ 
asymptotically equal the domination number of $G$. 
We conjecture that in these cases the two game parameters 
$\gamma_{MB}$ and $s_{MB}$ are equal.

\begin{conjecture}
 Let $G$ be a Binomial random graph $G_{n,p}$ with constant edge probability $p\in (0,1)$,
 then $\gamma_{MB}(G,b) = s_{MB}(G,b)$
 holds with high probability.
\end{conjecture}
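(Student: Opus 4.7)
Although the notation $\gamma_{MB}(G,b)$ is not fully specified earlier, I interpret it as the $(1:b)$ Maker-Breaker domination game, so that Dominator's bias is $1$. In this setting any winning strategy lasting $t$ rounds exhibits a dominating set of size at most $t$, so $s_{MB}(G,b) \le \gamma_{MB}(G,b)$ a priori, and the task is to establish the reverse inequality with high probability.

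The approach would exploit the concentration and abundance of minimum dominating sets in $G_{n,p}$. By the two-point concentration theorem of Wieland and Godbole, w.h.p.\ $\gamma(G_{n,p}) \in \{k, k+1\}$ for a deterministic value $k = k(n,p) \sim \log_{1/(1-p)}(n)$, and a standard first-moment computation yields at least $n^{(1-o(1))k}$ dominating sets of size $k+1$. Combined with the asymptotic equality $s_{MB}(G,b), \gamma_{MB}(G,b) = (1+o(1))\gamma(G)$ mentioned in the introduction, this reduces the problem to showing that both parameters coincide on some fixed value in $\{k, k+1, k+2\}$.

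The key technical ingredient would be a \emph{robustness lemma}: w.h.p., for every pair of disjoint vertex subsets $S, B \subseteq V(G)$ with $|S| \le s_{MB}(G,b)$ and $|B| \le b \cdot s_{MB}(G,b)$ such that $S$ extends to some dominating set of size $s_{MB}(G,b)$, there exists a dominating set $D$ of that size with $S \subseteq D \subseteq V(G) \setminus B$. The proof would estimate, for each $(S,B)$, the number of extensions of $S$ to dominating sets of the relevant size that avoid $B$, using first- and second-moment (or Janson-type) arguments, and union-bound over the $n^{O(\log n)}$ relevant pairs. Given the lemma, Dominator plays greedily: she maintains the invariant that her claimed set $S_i$ lies in some $s_{MB}$-sized dominating set disjoint from Breaker's claims $B_i$, and in round $i+1$ she claims any unclaimed vertex of such a set. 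After $s_{MB}(G,b)$ rounds her claimed vertices form a dominating set of that size, yielding $\gamma_{MB}(G,b) \le s_{MB}(G,b)$.

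The main obstacle is establishing the robustness lemma with per-pair failure probability decaying faster than $n^{-\Omega(\log n)}$, which is needed to survive the quasi-polynomial union bound. The principal difficulty arises when $|S|$ is close to $s_{MB}(G,b)$: the number of candidate extensions is then only polynomial in $n$, so even weakly correlated bad events can destroy all of them. Overcoming this will likely require a careful second-moment analysis exploiting the pseudo-random structure of minimum dominating sets in $G_{n,p}$, or an adaptive variant of Dominator's strategy that defers committing to a specific target dominating set until enough slack has been accumulated to restore the desired abundance.
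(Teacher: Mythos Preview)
The statement you are addressing is presented in the paper as an open \emph{conjecture}; the paper contains no proof of it, only the remark that the asymptotic equality $\gamma_{MB}(G) \sim s_{MB}(G) \sim \gamma(G)$ is not hard to check. There is therefore no paper proof to compare your proposal against.

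As a plan of attack your outline is reasonable, and you have honestly isolated the essential gap yourself: the robustness lemma in the regime where $|S|$ is close to $s_{MB}(G,b)$. This is not a technicality. When only one or two vertices remain to be chosen, the number of valid completions of $S$ to a minimum-size dominating set may be only polynomial in $n$, while the union bound over admissible pairs $(S,B)$ runs over $n^{\Theta(\log n)}$ terms; a second-moment or Janson-type estimate cannot by itself deliver failure probability $n^{-\omega(\log n)}$ per pair without further structural input. Moreover, the conjecture demands \emph{exact} equality, yet even $\gamma(G_{n,p})$ is only known to be two-point concentrated, so pinning down $s_{MB}(G,b)$ precisely is already nontrivial. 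Until the robustness lemma (or an adaptive substitute that avoids the quasi-polynomial union bound) is actually established, the argument remains a heuristic rather than a proof.
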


\begin{conjecture}
 Let $G$ be a random geometric graph $\mathcal{G}_2(n,r)$ with constant radius $r\in (0,1)$,
 then $\gamma_{MB}(G,b) = s_{MB}(G,b)$
 holds with high probability.
\end{conjecture}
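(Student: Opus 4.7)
The plan is to establish, w.h.p., the chain
$$
\gamma(G)\;\le\;s_{MB}(G,b)\;\le\;\gamma_{MB}(G,b)\;\le\;\gamma(G),
$$
whose first two inequalities hold for every graph, and whose third (non-trivial) inequality I would prove by exhibiting an explicit Dominator strategy of length $\gamma(G)$. The proof rests on the fact that when $r\in(0,1)$ is constant, there is a deterministic cell decomposition of $[0,1]^2$ that reduces domination in $\mathcal{G}_2(n,r)$ to a finite combinatorial problem depending only on $r$.

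First I would fix a small constant $\eta=\eta(r)>0$ and tile $[0,1]^2$ by axis-parallel squares $C_1,\dots,C_L$ of side $\eta$. Call an unordered pair $\{C_i,C_j\}$ \emph{close} if every pair of points $p\in C_i$, $q\in C_j$ satisfies $\|p-q\|\le r$, and \emph{far} if every such pair satisfies $\|p-q\|>r$. Define two deterministic graphs $H^-\subseteq H^+$ on vertex set $[L]$, whose edge sets are the close pairs and the non-far pairs, respectively. For $\eta$ sufficiently small I would show that $\gamma(H^-)=\gamma(H^+)=:k$, a constant depending only on $r$. Using Chernoff bounds, every cell contains $\Theta(\eta^2 n)$ vertices of $G$ w.h.p., and from this one deduces $\gamma(G)=k$ w.h.p.: a minimum dominating set of $H^-$ converts, by picking one vertex per chosen cell, into a dominating set of $G$, while any dominating set of $G$ of size smaller than $k$ would project onto a dominating set of $H^+$ of that size, a contradiction.

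The central step is Dominator's strategy. Fix an arbitrary minimum dominating set $\{i_1,\dots,i_k\}$ of $H^-$; in round $j\in[k]$ Dominator claims any unclaimed vertex lying in $C_{i_j}$. Since Staller claims at most $bk$ vertices over the $k$ rounds, a constant independent of $n$, while $|V(G)\cap C_{i_j}|=\Theta(\eta^2 n)$ w.h.p., the strategy is always executable. The resulting set of $k$ vertices contains one representative of every $C_{i_j}$ and is a dominating set of $G$ by the definition of $H^-$, hence $\gamma_{MB}(G,b)\le k=\gamma(G)$ w.h.p., closing the sandwich.

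The main obstacle is the calibration $\gamma(H^-)=\gamma(H^+)$ for all sufficiently small $\eta$. My approach is to compare both quantities with the continuum covering parameter $k_c(r)$, defined as the minimum number of closed disks of radius $r$ centred in $[0,1]^2$ needed to cover $[0,1]^2$. As $\eta\to 0$ both $\gamma(H^-)$ and $\gamma(H^+)$ should converge to $k_c(r)$; being integers bounded in terms of $r$ alone, they must coincide for all $\eta$ small enough. A secondary but routine difficulty is verifying the concentration $\gamma(G)=k$ w.h.p.\ uniformly over the $\binom{L}{k}$ candidate cell-sets, which reduces to a standard union bound since $L$ is a constant depending only on $r$.
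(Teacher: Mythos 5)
The statement you are proving is stated as an open \emph{conjecture} in the paper, so there is no reference proof to compare against; the authors explicitly flag that the best they can assert is asymptotic equality of $\gamma_{MB}$ and $s_{MB}$ (the parenthetical ``and maybe equal'' in the introduction signals exactly this gap). Your sandwich strategy $\gamma(G)\le s_{MB}\le\gamma_{MB}\le\gamma(H^-)$ is the natural first attempt and the cell/strategy parts of it are sound: since each cell contains $\Theta(\eta^2 n)$ vertices w.h.p., Staller can never exhaust the $k$ designated cells within $k$ rounds, so $\gamma_{MB}(G,b)\le\gamma(H^-)$ is correct, and $\gamma(H^+)\le\gamma(G)\le\gamma(H^-)$ follows by projection as you say.

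The genuine gap is precisely the step you single out as the ``main obstacle'': the claim that $\gamma(H^-)=\gamma(H^+)$ for all sufficiently small $\eta$. Your argument is that both quantities converge to the continuum covering number $k_c(r)$ and, being bounded integers, must eventually agree. That inference is not valid, because the two sequences need not have the same limit. The edge sets of $H^-$ and $H^+$ correspond to effective radii $r-\Theta(\eta)$ and $r+\Theta(\eta)$ respectively, so as $\eta\to 0$ one has $\gamma(H^-)\to k_c(r^-):=\lim_{\delta\downarrow 0}k_c(r-\delta)$ while $\gamma(H^+)\to k_c(r^+)$, and these one-sided limits differ exactly at the critical radii where an optimal covering of $[0,1]^2$ has zero slack. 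At such an $r$ you get $\gamma(H^+)=k_c(r)$ and $\gamma(H^-)=k_c(r)+1$ for every $\eta>0$, and the sandwich does not close. Worse, at a critical $r$ it is not even clear that $\gamma(G)$ concentrates on a single value w.h.p., so the identity $\gamma(G)=k$ that drives the whole argument may itself fail. The conjecture asks only for $\gamma_{MB}(G,b)=s_{MB}(G,b)$, which could conceivably hold even when neither equals $\gamma(G)$; establishing that at critical radii would need a different idea (e.g.\ showing that the round-optimal Dominator strategy can always be modified to also be size-optimal, without routing through $\gamma(G)$). As written, your proof establishes the conjecture only for radii $r$ for which $r\mapsto k_c(r)$ is locally constant, which is a full-measure but not exhaustive set, and the authors' decision to leave this as a conjecture is consistent with exactly this obstruction.
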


\section*{Acknowledgement}
This topic was started
in connection with a Bachelor thesis of the first author at the Hamburg University of Technology. He wants to thank Anusch Taraz and his research group for the supervision.
Moreover, this research was started when the second and fourth author were supported 
by Deutsche Forschungsgemeinschaft (Project CL 903/1-1). Additionally, the research of the fourth author was partly supported by the ANR project P-GASE (ANR-21-CE48-0001-01).

\bibliographystyle{amsplain}
\bibliography{references}

\bigskip\bigskip\bigskip

\address{Hamburg University of Technology, Germany.} \\
{\scriptsize \email{ali.bagdas@tuhh.de, dennis.clemens@tuhh.de, fabian.hamann@tuhh.de}} \\[2pt]
\address{Universit\'e Claude Bernard Lyon 1, France.}\\
{\scriptsize \email{yannick.mogge@univ-lyon1.fr}}

\end{document}